\pretolerance=10000
%\onehalfspacing
%\usepackage[T1]{fontenc}

%\usepackage[ansinew]{inputenc}

%\usepackage{enumitem,graphicx}
%\usepackage[colorlinks=true,urlcolor=blue,
%citecolor=red,linkcolor=blue,linktocpage,pdfpagelabels,
%bookmarksnumbered,bookmarksopen]{hyperref}
%\usepackage[english]{babel}
%\usepackage[urw-garamond]{mathdesign} \usepackage[T1]{fontenc}
%\usepackage{framed}

%\usepackage[left=2.9cm,right=2.9cm,top=2.8cm,bottom=2.8cm]{geometry}
%%\usepackage[left=2.8cm,right=2.8cm,top=3cm,bottom=3cm]{geometry}

%\renewcommand{\baselinestretch}{1.03}
%\usepackage[hyperpageref]{backref}
\documentclass[12pt,reqno]{amsart}
\usepackage{amsfonts}
\usepackage{amsmath,amssymb,amsthm, mathrsfs,amsopn}% 使得标注部分显示出来%
\usepackage{a4wide}
\allowdisplaybreaks
\numberwithin{equation}{section}
 \usepackage[pagewise]{lineno}\linenumbers\nolinenumbers
\usepackage{graphicx}
\usepackage{epstopdf}
\usepackage{float}
\usepackage{stfloats}
\usepackage{subfigure}
\usepackage{soul}
\usepackage{url}
\usepackage{color}
\usepackage[colorlinks, linkcolor=blue, citecolor=blue]{hyperref}

\newtheorem{theorem}{Theorem}[section]

\newtheorem{lemma}[theorem]{Lemma}

\theoremstyle{definition}

\newtheorem{remark}[theorem]{Remark}

\newcommand{\R}{\mathbb{R}}

\newcommand{\eeq}{\end{equation}}
\newcommand{\beq}{\begin{equation*}}

\begin{document}
	
	\title[prescribed the mass solution]{Existence and blow up behavior of  prescribed mass solutions on large smooth domains to the Kirchhoff equation with  combined nonlinearities}
%	\thanks{The research was supported  by the Natural
%		Science Foundation of China  (No. ).}
	\thanks{The research was supported by National Science Foundation of China *****}
\author{Xiaolu Lin}
	\address{ Xiaolu Lin,~School of Mathematics and Statistics, Central China Normal University, Wuhan 430079, P. R. China}
	\email{ xllin@ccnu.edu.cn}

	\author{Zongyan Lv}
	\address{ Zongyan Lv,~ Center for Mathematical Sciences, Wuhan University of Technology, Wuhan 430070, P. R. China	}
	\email{zongyanlv0535@163.com}
	\date{}

\begin{abstract}
In this paper,  we consider the existence, multiplicity and the asymptotic behavior  of prescribed mass
solutions to the following nonlinear Kirchhoff equation with mixed nonlinearities:
\begin{equation*}
    \begin{cases}
        -(a+b\int|\nabla u|^2\mathrm{d}x)\Delta u+V(x)u+\lambda u=|u|^{q-2}u+\beta|u|^{p-2}u \quad&\text{in}\ \Omega,\\
        \int_{\Omega}|u|^2\mathrm{d}x=\alpha,
    \end{cases}
\end{equation*}
both on large bounded smooth star-shaped domain $\Omega\subset\mathbb{R}^{3}$ and on $\mathbb{R}^{3}$, where  $2<p<\frac{14}{3}<q<6$ and $V(x)$ is the potential. The standard approach based on the Pohozaev identity to obtain normalized solutions is invalid due to the presence of potential $V(x)$.
%When $\beta \leq 0$, via detailed blow up analysis,   we also make clear the asymptotic behavior of these solutions as $\alpha\to0^+$. In addition, when $\beta>0$ and $p,q \in (2,\frac{14}{3})\setminus (\frac{10}{3},4 )$, we also proved that $\lambda=0$ is a bifurcation point with $\Omega=\mathbb{R}^{3}$.
%Besides,  Our study can be regarded as a complement of Bartsch-Qi-Zou (Math Ann 390, 4813--4859, 2024), which has addressed an open problem raised in Bartsch et al. (Commun Partial Differ Equ 46(9):1729–1756, 2021).

		\vskip 0.2cm
		\noindent{\bf MR(2010) Subject Classification:} {35J60; 35J20; 35R25}
		\vskip 0.2cm
		\noindent{\bf Key words:} {Normalized solutions; Potential; combined power-type nonlinearities}
	\end{abstract}

\maketitle

\section{Introduction}

	The aim of this paper is to  study the existence, multiplicity and the asymptotic behavior of the  solutions to the following nonlinear Kirchhoff equation:
		\begin{equation}\label{main-eq}
			\begin{cases}
				-(a+b\int_\Omega|\nabla u|^2\mathrm{d}x)\Delta u+V(x)u+\lambda u=|u|^{q-2}u+\beta|u|^{p-2}u\quad&\text{in}\ \Omega,\\
				u\in H_0^1(\Omega),\quad\int_{\Omega}|u|^2dx=\alpha,
			\end{cases}
		\end{equation}
	where $a,b>0$, $\Omega \subset \mathbb{R}^3$ is either a bounded smooth star-shaped domain or all of $\mathbb{R}^3$,  $2<p<\frac{14}{3}<q<6$ and $V(x)$ is the potential. The mass $\alpha>0$ and the parameter $\beta \in \mathbb{R}$ are prescribed. Here  the frequency $\lambda$  appears as a Lagrange multiplier.

As is known to us all, the equation of \eqref{main-eq} originates from
the following   problem of Kirchhoff type:
   \begin{equation}\label{problem1}
-\left(a+b\int_{\R^3} |\nabla u|^2 \mathrm{d}x  \right)\Delta u +V(x)u+\lambda u=f(u)   ~~\text{in}~~\R^3,
\end{equation}
where $f \in \mathcal{C}(\mathbb{R}, \mathbb{R})$, which was proposed by G. Kirchhoff in \cite{Ki} as an extension of the classical D'Alembert's wave equations, describing free vibrations of elastic strings.
From a mathematical point of view,   the Kirchhoff equation  is no longer a pointwise identity   due to the appearance of the term $(\int_{\R^3} |\nabla u|^2dx)\Delta u$.   After the a functional analysis approach was introduced by J.L. Lions in the pioneer work \cite{JLLions1978} the Kirchhoff type equations began to call attention of  many researchers.
	In the equation \eqref{main-eq}, if $\lambda $ is given, we call it fixed frequency problem.
For a fixed frequency $\lambda \in \R  $,	the existence and multiplicity of solutions to \eqref{main-eq} has been investigated in the last two decades by many authors, e.g. \cite{F,G,LY,LZ24,WT} for a survey on almost classical results.

Nowadays, physicists are more interested in solutions  possessing  the prescribed $L^{2}$-mass, which is usually  called normalized solution.
The existence and properties of these normalized solutions recently has attracted the attention of many researchers. The literature in this direction is huge and we do not even make an attempt to summarize it here (\cite{BV13,BQZ24,GJ,BHG3,PPV}).
The normalized solutions of \eqref{problem1} can be searched as critical points of $J_V(u)$ constrained on $S_\alpha$, where
\begin{equation}\label{sec-1-J(u)}
J_V(u)=\frac{a}{2}  \int_{\R^3}|\nabla u|^2dx+\frac{b}{4}\bigg(\int_{\R^3}|\nabla u|^2dx\bigg) ^2+\frac{1}{2} \int_{\R^3} V(x)u^2\mathrm{d}x-\int_{\R^3}F(u)\mathrm{d}x
\end{equation}
and
 $S_\alpha=\{u \in H^1(\mathbb{R}^{3}):\int_{\R^3}|u|^2dx=\alpha \}$.
		In this case, the mass $\alpha > 0$  is prescribed, while the frequency $\lambda$ is unknown and  comes out as a Lagrange multiplier.  	
As we know,  Ye seems to be the  first  to consider the  normalized solutions to Kirchhoff equation \eqref{problem1} with $V(x)\equiv 0$ and  $f(x,u)=|u|^{p-2}u$. Based on  the paper of Ye \cite{Y1,Y2}, we can summarize that:

(i) If $2<p<2+\frac{4}{N}$, for $\alpha>0$, there exists a global minimizer. If $2+\frac{4}{N}\leq p<2+\frac{8}{N}$, there exists a $\alpha_p^*\geq 0$  such that $\alpha>\alpha_p^*$,  the energy functional has at least one constraint critical point on $S_\alpha$.

(ii) If $p\geq 2+\frac{8}{N}$, there is no minimizers.  However, there exits  a mountain pass type solution for the energy functional  constrained on $S_\alpha$.

(iii) If $p=2+\frac{8}{N}$ and $\alpha>\alpha^*$,    there exits  a mountain pass type solution for the energy functional  constrained on $S_\alpha$.

As a consequence, we notice that the $L^2$-critical exponent $p=2+\frac{8}{N}$   is the threshold exponent  which plays a pivotal role in the study of normalized solutions to the Kirchhoff problem.

We would like to mention here that some paper involved in dealing with the Kirchhoff equation with the autonomous case, i.e., the potential $V$ is a constant.
In \cite{LLY}, Li, Luo and Yang considered the existence and asymptotic properties  of normalized solutions to the following Kirchhoff equation with  mixed nonlinearities:
 \begin{equation}\label{problem2}
-\left(a+b\int_{\R^3} |\nabla u|^2 \mathrm{d}x \right)\Delta u +\lambda u=|u|^{p-2}u+\mu |u|^{q-2}u   ~~\text{in}~~\R^3,
\end{equation}
 where $a, b, \alpha, \mu>0, 2<q<\frac{14}{3}<p\leq 6$ or $\frac{14}{3}<q<p\leq 6$, and proved a multiplicity result for the case of  $2<q<\frac{10}{3}$
 and $\frac{14}{3} <p<6$, and the existence of ground state normalized solutions for $2<q<\frac{10}{3}<p=6$ or $\frac{14}{3}<q<p\leq 6.$ They also gave some asymptotic results on the obtained  normalized  solutions.  On the one hand,  Chen and Tang obtained a local minimizer and a mountain-pass type solution with explicit conditions on $\alpha$ in the case of $2<q<\frac{10}{3}$. In particular, they fill the gap of $\frac{10}{3}\leq q<\frac{14}{3}$, which a mountain-pass type solution has been found under explicit conditions on $\alpha$. They also succeed to obtain the existence of a ground state solution for all $\alpha>0$ when $\frac{14}{3} \leq q<6$.
 In \cite{FLZ2023}, Feng, Liu and Zhang considered the existence and multiplicity of a normalized solution  to the Kirchhoff equation with combined nonlinearities, which is the Sobolev critical case.
Zeng et al. \cite{ZZZZ2021} showed the existence, nonexistence and multiplicity of the normalized solutions to the Kirchhoff equation with a general nonlinear term based on the scaling skills, and the results about the existence, nonexistence  and multiplicity of the normalized solutions to Sch\"{o}dinger equation in \cite{JeanZhangZhong2021}.

We would like to mention here some results involving the potential
$V(x)$ is non-constant. In \cite{CZ23}, Cai and Zhang first proved the existence of normalized solutions under suitable conditions on the potential $V(x)\ge0$. Secondly, they  proved the existence of mountain pass solutions with positive energy and the nonexistence of solutions with negative energy if $V(x)\le0$ is bounded.  Thirdly, they could find a local minimizer of the energy functional if $V(x)\le0$ is not too small on a suitable interval.
Rong and Li \cite{RL23} studied the existence of positive normalized ground state solutions to the Kirchhoff equation with an explicit assumption on potential $V$. In addition, He, Lv and Tang \cite{HLT} studied the existence of ground state normalized solutions to the Kirchhoff equation with potential and general nonlinear term. In \cite{HLT}, the assumptions on $V(x)$ are defined as
follows:
\begin{itemize}
\item[(V-1)]
 $\lim\limits_{|x|\to+\infty}V(x)=\sup\limits_{x\in \R^3} V(x) =0$ and there exists some $\sigma_1 \in [0, \frac{3(\alpha-2)-4}{3(\alpha-2)}a)$  such that
\begin{equation*}
\left|\int_{\R^3} V(x)u^2\mathrm{d}x\right|\leq\sigma_1\|\nabla u\|_2^2,~\text{for all}~u\in H^1(\R^3).
\end{equation*}

\item[(V-2)] $\nabla V(x)$ exists for a.e. $x\in \R^3$.  Putting  $W(x):=\frac{1}{2}\langle\nabla V(x),x\rangle$,  there exists some $\sigma_2\in[0, \frac{3(\alpha-2)(a-\sigma_1)}{4}-a ]$ such that
\begin{equation*}
\left|\int_{\R^3} W (x)u^2\mathrm{d}x\right|\leq\sigma_2\|\nabla u\|_2^2,~\text{for all}~u\in  H^1(\R^3).
\end{equation*}

\item[(V-3)] $\nabla W(x)$ exists for a.e. $ x\in \R^3$. Letting  $\Upsilon (x):=4W(x)+\langle\nabla W(x),x\rangle,$
    there exists some $\sigma_3 \in [0, 2a) $ such that
\begin{equation*}
\int_{\R^3} \Upsilon_{+} (x) u^2\mathrm{d}x\leq\sigma_3\|\nabla u\|_2^2,~\text{for all}~u \in  H^1(\R^3).
\end{equation*}

%\item[(V4)]  The function $Q(x):=W(x)+2V(x) $ is non-increasing in $\R^{+}$ and is non-decreasing in $\R^{-}$.

\end{itemize}
It is worth to mention that the assumptions on $V$ in \cite{HLT} and \cite{RL23} are inspired by the literature \cite{DZ22}, which are different from our assumptions on potential $V$.
Since  our methods in this paper depends on the monotonicity trick,  no longer depending on the so-call Pohozaev  mainfold which is composed of Pohozaev identy. In other words, our assumptions in this paper is more weak and more generous.

		There are only a few approaches and results on the study of normalized solutions on bounded domains \cite{ABD,BCJ,BCJS10,BHG3,WC2024}, which dealt with the  autonomous case. In view of the inherent characteristics of the problem with prescribed mass, the so-called Pohozaev manifold is not available when working in bounded domains.  However,  it is worth to point that  the method of these papers above do not work for non-constant $V$.
        Recently, Bartsch-Qi-Zou \cite{BQZ24} considered the existence and properties of normalized solutions with a combination of  mass subcritical and  mass supercritical, i.e. $f(|u|)u=|u|^{q-2}+\beta|u|^{p-2}u$, with $2<p<2+\frac{4}{N}<q<2^*$.
        %with particular emphasis to the role played by the lower order term $\beta|\Phi|^{p-2}\Phi$ in comparison with the unperturbed case $\beta=0$, and to the relation between the different exponents $2<q<p<2^*$.

Inspired by \cite{BQZ24},  as naturally
expected, we are going to extend
the results of  \cite{BQZ24} to the Kirchhoff equation. Compared with the above problem,  the study of the
convergence of Palais-Smale sequences becomes more complicated as the presence of the nonlocal term  $\int_{\R^3} |\nabla u|^2~dx$.
             By subtle energy estimates, we can obtain the existence of normalized solution to \eqref{main-eq} on large bounded smooth star-shaped domains $\Omega_r$. Next, by studying the asymptotic behavior of the radius $r\to \infty$, we also obtain the existence of normalized solution in whole space. Furthermore, based on the existence results in the whole space, we even study the asymptotic behavior of $\alpha\to 0^+$ via a blow up argument.
 Up to our knowledge,  the present paper seems to be the first result  for normalized solutions to the  Kirchhoff equations with potential  and combined nonlinearities   in $\Omega$ which is large bounded domain even expand to the whole space. Specially, making clear the asymptotic behavior of these normalized solutions for $\alpha\to 0^+$ to Kirchhoff equations  with potential and combined nonlinearities. Last but not least, we consider the bifurcation property in the case of mass subcritical.

		Throughout this paper, we will use the following notation: Let $m_+:=\max\{m,0\}$, $m_-:=\min\{m,0\}$ with $m\in\mathbb{R}$. For $\Omega\subset\mathbb{R}^{3}$, $r>0$ we set
		$
		\Omega_r:=\Big\{rx\in\mathbb{R}^{3}:x\in\Omega\Big\}
		$
		and
		\[
		S_{r,\alpha}:=S_{\alpha}\cap H_0^1(\mathbb{R}^{3})=\Big\{u\in H_0^1(\Omega_r):\|u\|_{L^2(\Omega_r)}^2=\alpha\Big\}.
		\]
		without loss of generality,  we assume that $\Omega\subset\mathbb{R}^{3}$ is a bounded smooth domain with $0\in\Omega$ and star-shaped with respect to 0.  Let $S$ the optimal constant of the Sobolev embedding from $D^{1,2}(\mathbb{R}^{3})$ to $L^{6}(\mathbb{R}^{3})$.% see \cite{Aub76}.
		Before stating our main results, we  state our basic assumptions on the potential,
		\begin{description}
			\item[$(V_0)$] $V\in C(\mathbb{R}^{3})\cap L^{\frac{3}{2}}(\mathbb{R}^{3})$ is bounded and $\|V_-\|_{\frac{3}{2}}<aS$.
		\end{description}
		For some results we will require that $V$ is $C^1$ and define
		$\tilde{V}:\mathbb{R}^{3}\to\mathbb{R}$ by $\tilde{V}(x)=\nabla V(x)\cdot x$.
		In our first result, we consider the case $\beta>0$.

\begin{theorem}\label{beta>0-e<0-Omega}
Suppose that $(V_0)$, $\beta>0$ hold and set $2<p<\frac{10}{3}<\frac{14}{3}<q<6$ and
\[
\alpha_V=\bigg(\frac{b}{12(q-p)}\bigg)^3\bigg(\frac{q(14-3p)}{C_q}\bigg)^\frac{14-3p}{q-p}\bigg(\frac{p(3q-14)}{\beta C_p}\bigg)^\frac{3q-14}{q-p}.
\]
Then the following hold for $0<\alpha<\alpha_V$:
\begin{description}
  \item[(i)]There exists $r_\alpha>0$ such that \eqref{main-eq} on $\Omega_r$ with $r>r_\alpha$ has a local minimum type solution $(\lambda_{r,\alpha},u_{r,\alpha})$ with $u_{r,\alpha}>0$ in $\Omega_r$ and $E_V(u_{r,\alpha})<0$. Moreover, there exists $C_\alpha>0$ such that \[\limsup\limits_{r\to\infty}\max\limits_{x\in\Omega_r}u_{r,\alpha}(x)<C_\alpha.
  \]
  \item[(ii)] Moreover, if there exists $m>0$ independent of $r$ such that
  \begin{equation}\label{vvv}
  \frac{1}{2}\int_{\Omega_r}\tilde{V}u^2\mathrm{d}x-\int_{\Omega_r}Vu^2\mathrm{d}x\ge m>0,
  \end{equation}
  then there holds
  \[
  \underset{r\to\infty}{\liminf}\,\lambda_{r,\alpha}>0.
  \]
\end{description}
\end{theorem}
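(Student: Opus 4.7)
The plan is to seek $u_{r,\alpha}$ as a local minimizer of
$$E_V(u)=\frac{a}{2}\|\nabla u\|_2^2+\frac{b}{4}\|\nabla u\|_2^4+\frac{1}{2}\int_{\Omega_r}Vu^2-\frac{1}{q}\|u\|_q^q-\frac{\beta}{p}\|u\|_p^p$$
on the truncated set $A_{r,\alpha}:=\{u\in S_{r,\alpha}:\|\nabla u\|_2\le\rho_0\}$ for a suitable $\rho_0>0$ independent of $r$. By $(V_0)$ and the Sobolev embedding the negative part of $V$ contributes $\int V_-u^2\le(\|V_-\|_{3/2}/S)\|\nabla u\|_2^2$, which is absorbed into $\tfrac{a}{2}\|\nabla u\|_2^2$ with strictly positive remainder. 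Using the Gagliardo-Nirenberg inequality $\|u\|_s^s\le C_s\|\nabla u\|_2^{s\gamma_s}\alpha^{s(1-\gamma_s)/2}$ with $\gamma_s=\tfrac{3(s-2)}{2s}$, together with $p\gamma_p<2<4<q\gamma_q$ (from $p<10/3<14/3<q$), one obtains $E_V(u)\ge\phi_\alpha(\|\nabla u\|_2)$ where
$$\phi_\alpha(t):=\frac{a-\|V_-\|_{3/2}/S}{2}t^2+\frac{b}{4}t^4-\frac{\beta C_p\alpha^{p(1-\gamma_p)/2}}{p}t^{p\gamma_p}-\frac{C_q\alpha^{q(1-\gamma_q)/2}}{q}t^{q\gamma_q}.$$
The threshold $\alpha_V$ is characterized by demanding that $\phi_\alpha$ have a strictly negative local minimum separated from $-\infty$ by a strictly positive local maximum; the closed form in the statement arises by optimizing the critical-point equations, making use of $4-p\gamma_p=(14-3p)/2$ and $q\gamma_q-4=(3q-14)/2$, which encode how the Kirchhoff quartic $\tfrac{b}{4}t^4$ interacts with the two nonlinearities. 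Choosing $\rho_0$ between the two critical points of $\phi_\alpha$ forces $\inf_{A_{r,\alpha}}E_V<0<\inf_{\|\nabla u\|_2=\rho_0}E_V$, so minimizing sequences are trapped in the interior of $A_{r,\alpha}$.

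Ekeland's variational principle on $A_{r,\alpha}$ produces a bounded Palais-Smale sequence $(u_n)$ at level $c_{r,\alpha}=\inf_{A_{r,\alpha}}E_V<0$, with bounded Lagrange multipliers $(\lambda_n)$. Since $\Omega_r$ is bounded, Rellich gives strong $L^s(\Omega_r)$ convergence for $s\in[2,6)$ to a weak limit $u_{r,\alpha}$. The delicate step is passing to the limit in the nonlocal Kirchhoff coupling: testing the Palais-Smale relation against $u_n-u_{r,\alpha}$ and invoking the Brezis-Lieb identity yields $\|\nabla u_n\|_2\to\|\nabla u_{r,\alpha}\|_2$, so that $u_n\to u_{r,\alpha}$ strongly in $H_0^1(\Omega_r)$ and the limit solves \eqref{main-eq}. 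Replacing $u_{r,\alpha}$ by $|u_{r,\alpha}|$ preserves both the constraint and the energy, so minimality forces $u_{r,\alpha}\ge 0$, and the strong maximum principle upgrades this to $u_{r,\alpha}>0$ in $\Omega_r$. The uniform sup-norm bound $\max_{\Omega_r}u_{r,\alpha}<C_\alpha$ follows from a Brezis-Kato / Moser iteration: the uniform $H_0^1$-bound $\|\nabla u_{r,\alpha}\|_2\le\rho_0$ yields uniform $L^6$-control, and the subcritical growth of the nonlinearities makes the iteration constants depend only on $\rho_0$, $\|V\|_{3/2}$ and the structural data, not on $r$.

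For part (ii), testing \eqref{main-eq} with $u_{r,\alpha}$ gives the Lagrange identity, and multiplying by $x\cdot\nabla u_{r,\alpha}$ and integrating by parts using Dirichlet data yields the Pohozaev-type identity
$$2a\|\nabla u_{r,\alpha}\|_2^2+2b\|\nabla u_{r,\alpha}\|_2^4=\int_{\Omega_r}\tilde Vu_{r,\alpha}^2+\frac{3(q-2)}{q}\|u_{r,\alpha}\|_q^q+\frac{3\beta(p-2)}{p}\|u_{r,\alpha}\|_p^p+(a+b\|\nabla u_{r,\alpha}\|_2^2)\int_{\partial\Omega_r}(x\cdot\nu)(\partial_\nu u_{r,\alpha})^2\,\mathrm d\sigma.$$
Star-shapedness of $\Omega_r$ ensures that the boundary integral is non-negative. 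Solving Pohozaev for $\int\tilde Vu^2$, using $-\int Vu^2\ge m-\tfrac12\int\tilde Vu^2$ from \eqref{vvv}, and inserting into the Lagrange identity produces (after dropping the non-negative boundary contribution)
$$\lambda_{r,\alpha}\alpha\ge m+\frac{5q-6}{2q}\|u_{r,\alpha}\|_q^q+\frac{(5p-6)\beta}{2p}\|u_{r,\alpha}\|_p^p-2a\|\nabla u_{r,\alpha}\|_2^2-2b\|\nabla u_{r,\alpha}\|_2^4.$$
The uniform bounds from (i) control the gradient terms, while $E_V(u_{r,\alpha})<0$ forces the $L^q$-term to stay bounded below by a positive constant (the $p$-term alone cannot make the energy negative when $\|\nabla u\|_2$ is pinned near the local minimum of $\phi_\alpha$); combining these with the persistent $+m$ yields $\liminf_{r\to\infty}\lambda_{r,\alpha}>0$.

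The main obstacle is the nonlocal Kirchhoff term: strong $\|\nabla u_n\|_2$-convergence for the Palais-Smale sequence is not automatic from weak convergence and requires the Brezis-Lieb identity together with difference testing, while the uniform-in-$r$ constants in the $L^\infty$ iteration must be extracted from the geometric profile $\phi_\alpha$ rather than from the specific domain $\Omega_r$. In (ii), the crucial algebraic point is choosing the precise linear combination of Lagrange and Pohozaev that preserves the sign $+m$ on the right-hand side without introducing uncontrolled $V$-integrals, and verifying that the residual negative contributions are absorbed by the uniform bounds from (i).
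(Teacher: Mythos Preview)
Your outline of part (i) follows the same local-minimization strategy as the paper, but there is a real omission: you deduce $\inf_{A_{r,\alpha}}E_V<0$ from the fact that $\phi_\alpha$ has a negative local minimum, yet $\phi_\alpha$ is only a \emph{lower} bound for $E_V$, so this does not follow. One must actually construct a test function in $A_{r,\alpha}$ with negative energy; the paper does this by scaling the first Dirichlet eigenfunction of $\Omega$ onto $\Omega_r$, and it is precisely this construction that forces the threshold $r_\alpha$ (both so that $A_{r,\alpha}\neq\emptyset$ via Poincar\'e and so that the scaled eigenfunction has $E_V<0$). For the uniform $L^\infty$ bound you propose Moser iteration, whereas the paper uses a blow-up argument combined with the Liouville theorems of Chen--Li and Esteban--Lions; your route is plausible since the nonlinearity is subcritical and $a+b\|\nabla u_r\|_2^2$ stays in $[a,a+b\rho_0^2]$, but you should note that the iteration constants also depend on $\lambda_r$, so a uniform-in-$r$ bound on $\lambda_r$ must be established first.

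Part (ii) has a genuine gap. Your Pohozaev identity is correct, but you then solve it for $\int\tilde V u^2$ and feed this into \eqref{vvv} as an inequality before substituting into the Lagrange identity. This detour produces the superfluous negative term $-2a\|\nabla u\|_2^2-2b\|\nabla u\|_2^4$, and your subsequent claim that ``$E_V<0$ forces the $L^q$-term to stay bounded below'' together with uniform gradient bounds does not show that the right-hand side exceeds a positive constant: the gradient terms could easily swamp $m$, and $E_V<0$ alone gives no uniform positive lower bound on $\|u\|_q^q$. The paper instead substitutes the Pohozaev expression for $(a+b\|\nabla u\|_2^2)\|\nabla u\|_2^2$ directly into the Lagrange identity, obtaining
\[
\lambda_{r,\alpha}\alpha \;=\; \tfrac12\!\int_{\partial\Omega_r}\!(x\cdot n)|\nabla u|^2\,d\sigma \;+\; \Big(\tfrac12\!\int_{\Omega_r}\!\tilde V u^2 - \!\int_{\Omega_r}\!V u^2\Big) \;+\; \tfrac{6-p}{2p}\beta\|u\|_p^p \;+\; \tfrac{6-q}{2q}\|u\|_q^q,
\]
in which every term is non-negative (star-shapedness for the boundary term, $p,q<6$ for the last two) and the middle bracket is exactly the quantity bounded below by $m$ in \eqref{vvv}. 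This yields $\lambda_{r,\alpha}\alpha\ge m$ immediately, with no need to balance competing signs.
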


\begin{theorem}\label{beta>0-e>0-Omega}
Suppose that $(V_0)$ and $\beta>0$ hold,  $V(x)\in C^1$ and $\tilde{V}$ is bounded.
Then the following hold:
\begin{description}
  \item[(i)]There exist $\tilde{\alpha}_V, \tilde{r}_\alpha>0$ such that for $r>\tilde{r}_\alpha$ and $ \alpha\in\left(0,\tilde{\alpha}_V\right)$, \eqref{main-eq} on $\Omega_r$  possess a mountain pass type solution $(\lambda_{r,\alpha},u_{r,\alpha})$ with $u_{r,\alpha}>0$ in $\Omega_r$ and positive energy $E_V(u_{r,\alpha})>0$. Moreover, there exists $C_\alpha>0$ such that
  \[
  \underset{r\to\infty}{\limsup}\,\underset{x\in\Omega_r}{\max}\,u_{r,\alpha}(x)<C_\alpha.
  \]
  \item[(ii)] There exists $0<\bar{\alpha}\le \tilde{\alpha}_V$ such that
  \[
  \underset{r\to\infty}{\liminf}\,\lambda_{r,\alpha}>0\quad\text{for any}\ 0<\alpha\le\bar{\alpha}.
  \]
\end{description}
\end{theorem}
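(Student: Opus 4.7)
The plan is to adapt the scheme of \cite{BQZ24} to the Kirchhoff setting by combining Jeanjean's monotonicity trick with uniform-in-$r$ energy estimates and, for part (ii), the Pohozaev identity on the star-shaped domain $\Om_r$. Since $q > 14/3 > 10/3$, the functional $E_V|_{S_{r,\alpha}}$ is unbounded from below, yet for $\alpha$ small the Gagliardo--Nirenberg inequality together with $(V_0)$ and the boundedness of $V$ produces a strictly positive ``barrier'': there exist $\tilde\alpha_V,\rho_\alpha, c_0 > 0$ such that $E_V(u) \ge c_0$ for all $u \in S_{r,\alpha}$ with $\|\nabla u\|_2^2 = \rho_\alpha$ and $\alpha \in (0, \tilde\alpha_V)$. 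The mass-preserving dilation $s_t \star u(x) := t^{3/2} u(tx)$ applied to a fixed radial bump supported near the origin sends $E_V$ strictly below $0$ for large $t$, because the $t^{3(q-2)/2}$ supercritical term with exponent $>4$ beats the Kirchhoff $t^4$-term; for $r > \tilde r_\alpha$ large, these dilations sit inside $\Om_r$, supplying the mountain-pass geometry for $E_V$ on $S_{r,\alpha}$.

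\textbf{Monotonicity trick and compactness.} Introduce Jeanjean's family for $\tau \in [1/2,1]$,
\begin{equation*}
E_V^\tau(u) := \tfrac{a}{2}\|\nabla u\|_2^2 + \tfrac{b}{4}\|\nabla u\|_2^4 + \tfrac{1}{2}\int_{\Om_r} V u^2 \,\mathrm{d}x - \tau\Big(\tfrac{1}{q}\|u\|_q^q + \tfrac{\beta}{p}\|u\|_p^p\Big),
\end{equation*}
with the same admissible path class $\Gamma_r$ and MP level $c_V^\tau(r)$. Monotonicity of $c_V^\tau(r)$ in $\tau$ forces differentiability a.e.; at any regular $\tau$, Jeanjean's lemma produces a bounded Palais--Smale sequence $(u_n^\tau)$ for $E_V^\tau|_{S_{r,\alpha}}$ at level $c_V^\tau(r)$. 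The compact embedding $H_0^1(\Om_r) \hookrightarrow L^s(\Om_r)$, $2\le s<6$, gives strong convergence in $L^p\cap L^q$; boundedness of the Lagrange multipliers $\la_n^\tau$ allows extraction of a weak $H_0^1$-limit $u^\tau$, and upgrading to strong $H_0^1$-convergence is achieved by identifying the nonlocal Kirchhoff coefficient $a+b\lim\|\nabla u_n^\tau\|_2^2$ in the limiting equation and then testing against $u^\tau$ itself. A diagonal argument letting $\tau_n \nearrow 1$ along regular values produces $u_{r,\alpha} \in S_{r,\alpha}$ solving \eqref{main-eq} with $E_V(u_{r,\alpha}) = c_V^1(r) > 0$; working with $|u|$ and the strong maximum principle yields $u_{r,\alpha} > 0$.

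\textbf{Uniform-in-$r$ bounds and the lower bound on $\la_{r,\alpha}$.} Extending admissible paths by zero shows that $r \mapsto c_V^1(r)$ is non-increasing, hence uniformly bounded, and a Gagliardo--Nirenberg argument inside the functional converts this into an $r$-uniform $H_0^1$-bound on $u_{r,\alpha}$. Standard Moser iteration (with constants depending only on this bound, $\|V\|_{3/2}$ and $\alpha$) then delivers $\limsup_{r\to\infty}\max_{x\in\Om_r} u_{r,\alpha}(x) < C_\alpha$. For (ii), combining the Nehari identity obtained by testing \eqref{main-eq} with $u_{r,\alpha}$ with the energy identity at level $c_V^1(r)$ gives
\begin{equation*}
\la_{r,\alpha}\alpha = \tfrac{q-2}{2}a\|\nabla u_{r,\alpha}\|_2^2 + \tfrac{q-4}{4}b\|\nabla u_{r,\alpha}\|_2^4 + \tfrac{q-2}{2}\!\int_{\Om_r}\! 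V u_{r,\alpha}^2\,\mathrm{d}x - \tfrac{\beta(q-p)}{p}\|u_{r,\alpha}\|_p^p - qE_V(u_{r,\alpha}).
\end{equation*}
Under $(V_0)$ the $a$-term and the $V$-term combine into a strictly positive multiple of $\|\nabla u_{r,\alpha}\|_2^2$; the $\|u_{r,\alpha}\|_p^p$ term is subcritical (since $p<10/3$ yields $3(p-2)/2<2$) and carries a small factor of $\alpha$; the Pohozaev identity on the star-shaped $\Om_r$ (boundary term non-negative) is available as a supplementary sign constraint if needed. Refining $\tilde\alpha_V$ to some $\bar\alpha$ so that the mountain-pass gradient barrier $\|\nabla u_{r,\alpha}\|_2^2 \ge \rho_\alpha$ overwhelms the remaining negative contributions yields $\liminf_{r\to\infty}\la_{r,\alpha}>0$.

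\textbf{Main obstacle.} The central difficulty is the interplay of the nonlocal term $b\|\nabla u\|_2^2$ with every stage: it must be compatible with the monotonicity of $c_V^\tau(r)$, with the identification of strong $H_0^1$-limits in the bounded-domain compactness argument, and with the Nehari--energy combination used to pin down $\la_{r,\alpha}$. Moreover, the comparison paths producing the uniform-in-$r$ MP level have to be chosen so that the $t^4$ Kirchhoff contribution never overtakes the $t^{3(q-2)/2}$ supercritical term, which is what ultimately forces the smallness conditions on $\alpha$.
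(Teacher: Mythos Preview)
Your scheme for part (i) is essentially the paper's: the family $J_{r,s}=E_V^\tau$, the monotonicity trick of Theorem~\ref{mp-th}, compactness on the bounded domain, and the passage $s\to1$ all match Section~4 (Lemma~\ref{mp-betage0}, Theorem~\ref{beta>0-s-omega-ge}, Lemma~\ref{nabla-u-bdd-j}). One caveat: the uniform-in-$r$ $H^1$-bound is \emph{not} obtained in the paper by ``a Gagliardo--Nirenberg argument inside the functional'' but via the Pohozaev identity on $\Om_r$ combined with the energy level (Lemma~\ref{nabla-u-bdd-j}); your one-line description is too vague to count as a proof there. The $L^\infty$ bound in the paper comes from a blow-up/Liouville argument (Lemma~\ref{ur-ubdd}) rather than Moser iteration, but either route is fine.

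The genuine gap is in part (ii). Your Nehari--energy identity is correct, but the term $-qE_V(u_{r,\alpha})$ is \emph{negative} (the mountain-pass level is positive) and is of the same order as the gradient terms; you give no quantitative mechanism for the positive pieces to ``overwhelm'' it, and the Pohozaev identity on $\Om_r$ carries a boundary term $-\int_{\partial\Om_r}|\nabla u|^2(x\cdot\mathbf n)\,\mathrm d\sigma$ of the wrong sign for your purpose. The paper does \emph{not} argue this way. Instead (Lemma~\ref{beta>0-e-strong-ge}\,(ii)) it assumes by contradiction that $\lambda_{\alpha_n}\le0$ along some $\alpha_n\to0$, passes to the weak limit $u_{r,\alpha_n}\rightharpoonup u_{\alpha_n}$ in $H^1(\mathbb R^3)$, and distinguishes two cases. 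If $u_{\alpha_n}=0$, a Lions-type dichotomy produces translates converging to a nontrivial solution of the \emph{autonomous} limit equation $-(a+b\Lambda^2)\Delta v+\lambda_{\alpha_n}v=|v|^{q-2}v+\beta|v|^{p-2}v$ in $\mathbb R^3$, whose Pohozaev identity (no potential, no boundary) forces $\lambda_{\alpha_n}>0$ directly. If $u_{\alpha_n}\neq0$, a second profile at infinity is extracted and the same autonomous Pohozaev argument applies. The point is that the sign of $\lambda$ is read off from the limit equation in $\mathbb R^3$, where the awkward boundary and potential terms are absent; your direct bounded-domain computation does not provide this and, as written, is not a proof.
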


\begin{remark} If $2<p<\frac{10}{3}<\frac{14}{3}<q<6$, we can figure it out that \[
\tilde{\alpha}_V=\bigg(\frac{2(a-\|V_-\|_{\frac{3}{2}}S^{-1})
}{3(p-2)}\bigg)^{\frac{3}{2}}
\bigg(\frac{C_p}{p}A_{p,q}+\frac{C_q}{q}\bigg)^{-\frac{3}{2}}
\bigg(\frac{pC_q}{\beta qC_p}A_{p,q}\bigg)^{\frac{3q-10}{2(q-p)}},
\]
where
\[
A_{p,q}=\frac{(q-2)(3q-10)}{(p-2)(4-3(p-2))}.
\]
\end{remark}

\begin{theorem}\label{betale0-e>0-Omega}
Suppose that $(V_0)$ and $\beta\le0$ hold,   $V(x)\in C^1$ and $\tilde{V}$ is bounded. Then the following hold:
\begin{description}
  \item[(i)] There is $r_\alpha>0$ such that for $\alpha>0$, \eqref{main-eq} on $\Omega_r$ with $r>r_\alpha$ possess a mountain pass type solution $(\lambda_{r,\alpha},u_{r,\alpha})$ with  the following properties: $u_{r,\alpha}>0$ in $\Omega_r$ and positive energy $E_V(u_{r,\alpha})>0$. Moreover, there exists $C_\alpha>0$ such that
      \[
      \underset{r\to\infty}{\limsup}\,\underset{x\in\Omega_r}{\max}\,u_{r,\alpha}(x)<C_\alpha.
      \]
  \item[(ii)] If in addition $\|\tilde{V}_+\|_{\frac{3}{2}}<2aS$ then there exists $\tilde{\alpha}>0$ such that
  \[
  \underset{r\to\infty}{\liminf}\,\lambda_{r,\alpha}>0\quad\text{for any}\ 0<\alpha<\tilde{\alpha}.
  \]
\end{description}
\end{theorem}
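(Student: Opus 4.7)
The overall strategy closely parallels that of Theorem~\ref{beta>0-e>0-Omega}, and the defocusing sign $\beta\le 0$ only simplifies matters. First, I would establish the mountain pass geometry of $E_V$ on $S_{r,\alpha}$ uniformly in $r$ and for every $\alpha>0$. Using $(V_0)$ and the Gagliardo--Nirenberg inequality, and observing that $-\tfrac{\beta}{p}\|u\|_p^p\ge 0$, one obtains
\[
E_V(u)\ \ge\ \tfrac{a-\|V_-\|_{3/2}/S}{2}\|\nabla u\|_2^2+\tfrac{b}{4}\|\nabla u\|_2^4-\tfrac{C_q}{q}\alpha^{(6-q)/4}\|\nabla u\|_2^{3(q-2)/2}.
\]
Since $3(q-2)/2>4$ whenever $q>14/3$, the right-hand side is positive on a sphere of radius $\|\nabla u\|_2=A_\alpha>0$ (independent of $r$), and a suitable $L^2$-preserving scaling $u_t(x)=t^{3/2}u(tx)$ realizes an element of arbitrarily negative energy, so the geometry is available for every $\alpha>0$ without any size restriction.

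Next, to produce a bounded Palais--Smale sequence at the mountain pass level $c_{r,\alpha}$, I would apply the Jeanjean monotonicity trick to the family
\[
E_V^\tau(u)=\tfrac{a}{2}\|\nabla u\|_2^2+\tfrac{b}{4}\|\nabla u\|_2^4+\tfrac{1}{2}\int_{\Omega_r}Vu^2\,\mathrm dx-\tau\Big[\tfrac{1}{q}\|u\|_q^q+\tfrac{\beta}{p}\|u\|_p^p\Big],\quad\tau\in[\tfrac12,1],
\]
and, for almost every $\tau$, obtain a bounded PS sequence for $E_V^\tau|_{S_{r,\alpha}}$. Rellich compactness on the bounded domain $\Omega_r$, together with the convergence of the nonlocal coefficient $a+b\|\nabla u_n\|_2^2$ along a subsequence, yields a constrained critical point $u_\tau$ which additionally satisfies a Pohozaev-type identity on $\Omega_r$; the boundary integral $(a+b\|\nabla u_\tau\|_2^2)\int_{\partial\Omega_r}|\nabla u_\tau|^2(x\cdot\nu)\,\mathrm d\sigma$ has the correct sign thanks to star-shapedness of $\Omega_r$ and is therefore discarded. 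Combining this identity with the equation tested against $u_\tau$ gives an $r$-uniform $H^1$-bound, and letting $\tau\to 1^-$ I extract a critical point $u_{r,\alpha}$ of $E_V|_{S_{r,\alpha}}$ at the level $c_{r,\alpha}>0$. Positivity follows by replacing $u_{r,\alpha}$ with $|u_{r,\alpha}|$ and invoking the strong maximum principle, while the $r$-uniform $L^\infty$-bound in (i) comes from a Moser/Brezis--Kato iteration applied to $-\Delta u=(a+b\|\nabla u\|_2^2)^{-1}[|u|^{q-2}u+\beta|u|^{p-2}u-Vu-\lambda u]$, using the just-established uniform $H^1$-bound.

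For part~(ii), I would combine the tested equation $\lambda_{r,\alpha}\alpha=\|u\|_q^q+\beta\|u\|_p^p-(a+b\|\nabla u\|_2^2)\|\nabla u\|_2^2-\int_{\Omega_r}Vu^2\,\mathrm dx$ with the Pohozaev identity on $\Omega_r$, discarding the nonnegative boundary integral. Eliminating $\|u\|_q^q$ between the two relations produces a lower bound of the schematic form
\[
\lambda_{r,\alpha}\alpha\ \ge\ \tfrac{q+6}{3(q-2)}(a+b\|\nabla u\|_2^2)\|\nabla u\|_2^2-\int_{\Omega_r}\Big[\tfrac{q\tilde V}{3(q-2)}+V\Big]u^2\,\mathrm dx-|\beta|\,\|u\|_p^p,
\]
in which the leading factor $\tfrac{q+6}{3(q-2)}>2$ (since $q<6$). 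Because $\|\tilde V_+\|_{3/2}<2aS$ and $V$ is bounded, the integral term is bounded by $(\text{strict fraction of }a)\|\nabla u\|_2^2+O(\alpha)$, and $|\beta|\|u\|_p^p$ is controlled by $|\beta|C_p\alpha^{(6-p)/4}\|\nabla u\|_2^{3(p-2)/2}$ with exponent $<4$. For $\alpha$ sufficiently small these error terms are absorbed by the leading $A^2$ and $A^4$ contributions, yielding the claimed $r$-uniform lower bound $\liminf_{r\to\infty}\lambda_{r,\alpha}>0$ on a maximal interval $(0,\tilde\alpha)$.

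The hardest step will be securing the $r$-uniform $H^1$-bound on the Palais--Smale/critical point family: the nonlocal Kirchhoff coefficient $a+b\|\nabla u\|_2^2$ does not commute with weak limits, so one has to rule out loss of gradient mass by combining the equation with the Pohozaev identity in a way that keeps every constant independent of $r$. The star-shapedness of $\Omega_r$ is essential because it fixes the sign of the boundary integral, but that integral is not quantitative and can only be discarded, so every estimate has to be tight. A closely related delicate point is the absorption argument in part~(ii), where the interplay between $\|\tilde V_+\|_{3/2}<2aS$ and the exponent $q\in(14/3,6)$ must produce a strictly positive coefficient of $\|\nabla u\|_2^2$ that can swallow the $|\beta|\|u\|_p^p$ contribution for small $\alpha$.
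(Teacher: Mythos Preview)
Your plan for part~(i) is essentially the paper's: mountain pass geometry, monotonicity trick, Pohozaev-based uniform $H^1$-bound, then pass $s\to1$. Two points to flag. First, your parameterization $E_V^\tau=A-\tau B$ with $B(u)=\tfrac1q\|u\|_q^q+\tfrac{\beta}{p}\|u\|_p^p$ does \emph{not} satisfy $B\ge0$ when $\beta<0$ (for functions that are small in $L^\infty$ the $p$-term dominates), so the monotonicity trick as stated fails; the paper avoids this by putting only the $q$-term in $B$ and leaving the (nonnegative) $-\tfrac{\beta}{p}\|u\|_p^p$ inside $A$. Second, the $L^\infty$-bound in the paper comes from a blow-up/Liouville argument rather than Moser iteration; your route should also work since $q<6$ is subcritical, but it is not what the paper does.

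The real gap is in part~(ii). Your absorption scheme rests on the claim that eliminating $\|u\|_q^q$ between the equation and the Pohozaev identity leaves a gradient coefficient $\tfrac{q+6}{3(q-2)}>2$. This is false: for $q\in(\tfrac{14}{3},6)$ that ratio lies in $(1,\tfrac43)$, and in fact the correct coefficient from the elimination is $\tfrac{6-q}{3(q-2)}\in(0,\tfrac16)$. With such a small coefficient there is no way to absorb the $\tilde V$-term using only $\|\tilde V_+\|_{3/2}<2aS$, so the direct inequality you propose cannot close. The paper proceeds very differently: it passes to a weak limit $u_\alpha$ as $r\to\infty$ (distinguishing the cases $u_\alpha\neq0$ and $u_\alpha=0$ via concentration--compactness), and on the limit equation in $\mathbb{R}^3$ uses one Pohozaev combination together with $\beta\le0$ and $\|\tilde V_+\|_{3/2}<2aS$ to force
\[
\int_{\mathbb{R}^3}|\nabla u_\alpha|^2\,\mathrm dx\ \ge\ C\,\alpha^{-\frac{6-q}{3q-10}}\ \longrightarrow\ \infty\quad\text{as }\alpha\to0,
\]
and then a second combination shows $\tfrac{2-q}{2q}\lambda_\alpha\|u_\alpha\|_2^2\to-\infty$, hence $\lambda_\alpha>0$ for $\alpha$ small. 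The mechanism is not absorption for fixed $\alpha$ but blow-up of $\|\nabla u_\alpha\|_2$ as $\alpha\to0$; your argument misses this and, as written, does not yield~(ii).
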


For the passage $r\to\infty$ we require the following condition on $V$.
\begin{description}
  \item[$(V_1)$] $V\in C^1$, $\underset{|x|\to\infty}{\lim}\,V(x)=0$, and there exists $\rho\in\left(0,1\right)$ such that
      \[
      \underset{|x|\to\infty}{\liminf}\underset{y\in B(x,\rho|x|)}{\inf}(x\cdot\nabla V(y))e^{\tau|x|}>0\quad\text{for any }\tau>0.
      \]
\end{description}

\begin{theorem}\label{beta>0-e<0-rn-}
Suppose that $V$ satisfies $(V_0)-(V_1)$,
\begin{description}
  \item[(i)] if $\beta>0$, $2<p<\frac{10}{3}<\frac{14}{3}<q<6$ and \eqref{vvv} hold,  then problem \eqref{main-eq} with $\Omega=\mathbb{R}^{3}$ possess for any $0<\alpha<\alpha_V$, $\alpha_V$ as in Theorem \ref{beta>0-e<0-Omega}, a solution $(\lambda_\alpha,u_\alpha)$ with $u_\alpha>0$ , $\lambda_\alpha>0$, and $E_V(u_\alpha)<0$.
  \item[(ii)] if $\beta>0$ and $\tilde{V}$ is bounded, then problem \eqref{main-eq} with $\Omega=\mathbb{R}^{3}$ admits for any $0<\alpha<\bar{\alpha}$, $\bar{\alpha}$ as in Theorem \ref{beta>0-e>0-Omega}, a solution $(\lambda_\alpha,u_\alpha)$ with $u_\alpha>0$, $\lambda_\alpha>0$, and $E_V(u_\alpha)>0$.
  \item[(iii)] if $\beta\le0$, $\tilde{V}$ is bounded and $\|\tilde{V}_+\|_{\frac{3}{2}}<2aS$, then problem \eqref{main-eq} with $\Omega=\mathbb{R}^{3}$ admits for any $0<\alpha<\tilde{\alpha}$, $\tilde{\alpha}$ as in Theorem \ref{betale0-e>0-Omega}(ii), a solution $(\lambda_\alpha,u_\alpha)$ with $u_\alpha>0$, $\lambda_\alpha>0$, and $E_V(u_\alpha)>0$. Moreover, $\underset{\alpha\to0^+}{\liminf}\,E_V(u_\alpha)=\infty$.
\end{description}
\end{theorem}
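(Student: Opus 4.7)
The plan is to obtain each solution on $\mathbb{R}^{3}$ as a weak limit, as $r\to\infty$, of the solutions $(\lambda_{r,\alpha},u_{r,\alpha})$ on $\Omega_r$ produced in Theorem \ref{beta>0-e<0-Omega} for part (i), in Theorem \ref{beta>0-e>0-Omega} for part (ii), and in Theorem \ref{betale0-e>0-Omega} for part (iii). I extend $u_{r,\alpha}$ by zero to $\mathbb{R}^{3}$; the uniform $L^{\infty}$ bound $\limsup_{r\to\infty}\max_{x\in\Omega_r}u_{r,\alpha}(x)<C_{\alpha}$ already established, combined with the mass constraint $\alpha$ and interpolation, controls every $L^{s}$-norm for $s\in[2,\infty]$. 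Testing \eqref{main-eq} against $u_{r,\alpha}$ yields a uniform-in-$r$ bound on $\|\nabla u_{r,\alpha}\|_{2}$ and, using the uniform lower bound on $\lambda_{r,\alpha}$ from part (ii) of the respective theorem, also an upper bound on $\lambda_{r,\alpha}$. Passing to a subsequence, $u_{r,\alpha}\rightharpoonup u_{\alpha}$ in $H^{1}(\mathbb{R}^{3})$, $u_{r,\alpha}\to u_{\alpha}$ almost everywhere and in $L^{s}_{\mathrm{loc}}$ for $s<6$, and $\lambda_{r,\alpha}\to\lambda_{\alpha}>0$.

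The crucial step is to rule out loss of mass at infinity and upgrade to strong convergence in $H^{1}(\mathbb{R}^{3})$. This is where hypothesis $(V_{1})$ is decisive. Applying the Pohozaev-type identity to $u_{r,\alpha}$ on the star-shaped domain $\Omega_r$ produces a boundary term with a favorable sign together with an interior term involving $\int_{\Omega_r}(\nabla V(x)\cdot x)u_{r,\alpha}^{2}\mathrm{d}x$. The pointwise positivity of $x\cdot\nabla V$ at infinity, made quantitative by the exponentially weighted lower bound in $(V_{1})$, prevents any nontrivial portion of the mass from concentrating in annuli $\{|x|>R\}$ with $R\to\infty$: otherwise that portion would contribute an unbounded positive term to the identity, contradicting the uniform gradient and energy bounds. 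Hence $u_{r,\alpha}\to u_{\alpha}$ strongly in $L^{2}(\mathbb{R}^{3})$, and by interpolation with the $L^{\infty}$ bound also strongly in $L^{p}(\mathbb{R}^{3})$ and $L^{q}(\mathbb{R}^{3})$. Testing the difference of the two equations against $u_{r,\alpha}-u_{\alpha}$ upgrades this to strong $H^{1}$ convergence, which in turn gives $\int_{\mathbb{R}^{3}}|\nabla u_{r,\alpha}|^{2}\mathrm{d}x\to\int_{\mathbb{R}^{3}}|\nabla u_{\alpha}|^{2}\mathrm{d}x$. Passing to the limit, $(\lambda_{\alpha},u_{\alpha})$ solves \eqref{main-eq} on $\mathbb{R}^{3}$ with $u_{\alpha}\in S_{\alpha}$; the sign of $E_{V}(u_{\alpha})$ is inherited by continuity, and positivity of $u_{\alpha}$ follows from that of $u_{r,\alpha}$ together with the strong maximum principle applied to the limiting equation.

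For the blow-up statement $\liminf_{\alpha\to 0^{+}}E_{V}(u_{\alpha})=\infty$ in (iii), the relevant solutions are of mountain-pass type associated with a purely mass-supercritical nonlinearity (since $\beta\le 0$ and $q>14/3$, the Kirchhoff and potential contributions cannot restore coercivity). Using the mass-preserving dilation $u_{t}(x):=t^{3/2}u(tx)$ and analyzing the fibering map $t\mapsto E_{V}(u_{t})$, the mountain-pass level $c(\alpha)$ admits a lower bound of the form $c(\alpha)\ge C\alpha^{-\sigma}$ with some $\sigma=\sigma(q)>0$; since the solution $u_{\alpha}$ obtained above realizes this level, letting $\alpha\to 0^{+}$ gives the claim. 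The main obstacle throughout is the strong $H^{1}$-convergence step, because the nonlocal Kirchhoff coefficient $b\int|\nabla u|^{2}\mathrm{d}x$ does not commute with weak limits: one cannot decouple strong convergence from the equation itself, so $(V_{1})$, the Pohozaev identity on $\Omega_r$, and the equation tested on $u_{r,\alpha}-u_{\alpha}$ must be used in tandem, with the Kirchhoff coefficient, the Lagrange multiplier and the tail mass all controlled simultaneously.
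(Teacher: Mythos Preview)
Your overall strategy---take the bounded-domain solutions, extract a weak limit, and show no mass is lost---matches the paper's, and your treatment of the blow-up in (iii) is essentially correct (the lower bound on the mountain-pass level in Lemma \ref{betale0-mp-g}(iii) is indeed of order $\alpha^{(q-6)/(3q-10)}\to\infty$). However, the mechanism you describe for ruling out loss of mass is wrong, and this is the heart of the proof. You propose applying the global Pohozaev identity on $\Omega_r$ and claim that escaping mass would contribute ``an unbounded positive term'' to $\int_{\Omega_r}(\nabla V\cdot x)\,u_{r,\alpha}^{2}$. This cannot work: condition $(V_1)$ only says $x\cdot\nabla V(y)\ge C_\tau e^{-\tau|x|}$ for every $\tau>0$, i.e.\ the quantity is positive but may decay to zero (merely slower than any exponential). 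In cases (ii)--(iii) the function $\tilde V=\nabla V\cdot x$ is even assumed bounded, so $\int_{\Omega_r}\tilde V u_{r,\alpha}^{2}$ is uniformly bounded and yields no contradiction whatsoever.

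The paper's argument is considerably more delicate and requires three ingredients absent from your sketch. First, a profile decomposition (Lemma \ref{rinftycom}): up to a subsequence $u_r=u_0+\sum_{k=1}^{l}w^{k}(\cdot-z_r^{k})+o_{H^{1}}(1)$, where each $w^{k}\ne 0$ solves the autonomous limit equation and $|z_r^{k}|\to\infty$. Second, assuming $l\ge 1$ and taking the nearest center $z_r^{1}$, an iterated Caccioppoli-type estimate shows $|u_r|+|\nabla u_r|\le Ae^{-c|z_r^{1}|}$ on the sphere $\partial B(z_r^{1},\delta|z_r^{1}|)$ for some fixed $c>0$. Third, one applies a \emph{localized} Pohozaev identity on $B(z_r^{1},\delta|z_r^{1}|)\cap B_r$, obtained by testing the equation against $z_r^{1}\cdot\nabla u_r$ (not $x\cdot\nabla u_r$). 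The boundary term $\mathcal A_1$ on $\partial B(z_r^{1},\delta|z_r^{1}|)$ then decays like $e^{-c|z_r^{1}|}$, the boundary term $\mathcal A_2$ on $\partial B_r$ has a favorable sign, while the interior term $\int (z_r^{1}\cdot\nabla V)\,u_r^{2}$ is bounded below by $C_\tau e^{-\tau|z_r^{1}|}\|w^{1}\|_2^{2}$ for \emph{every} $\tau>0$ by $(V_1)$. Choosing $\tau_0<c$ and multiplying through by $e^{\tau_0|z_r^{1}|}$ forces the boundary to vanish while the interior stays positive---the contradiction. The point of $(V_1)$ is thus a competition of exponential rates, not an unbounded term, and the global identity you invoke cannot detect it.
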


The rest of this manuscript is organized as follows.
In Section 2, we  collect some notations and some preliminary results   which will be used in this paper.
In Section 3, we obtain the existence and properties of ground states with prescribed mass in large bounded smooth star-shaped domains for the case $\beta>0$.
Section 4 and 5 are devoted to the existence of mountain-pass type solutions for the cases $\beta>0$ and $\beta\le0$, respectively.
The compactness of the normalized solution in $B_r$ as $r\to\infty$  and the proof of Theorem \ref{beta>0-e<0-rn-} are given in the Section 6.

\section{Preliminaries.}
\setcounter{equation}{0}
\setcounter{theorem}{0}	

This section is devoted to collect some preliminary results which will be used in this paper.
Let us first introduce the Gagliardo-Nirenberg inequality, see \cite{Wei82}.
\begin{lemma}
For any $s\in\left(2,6\right)$, there is a constant $C_{s}$ depending on
 s such that
\[
\|u\|_s^s\le C_{s}\|u\|_2^{6-s}\|\nabla u\|_2^{\frac{3(s-2)}{2}},\quad\forall\ u\in H^1(\mathbb{R}^{3}),
\]
where $C_{s}$ be the best constant.
\end{lemma}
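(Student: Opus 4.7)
The plan is the classical two-step derivation of the Gagliardo--Nirenberg inequality on $\mathbb{R}^3$. First I interpolate between $L^2$ and $L^6$ by H\"older's inequality: for $s\in(2,6)$ I would choose $\lambda\in(0,1)$ solving $\tfrac{1}{s}=\tfrac{\lambda}{2}+\tfrac{1-\lambda}{6}$, which gives $\lambda=\tfrac{6-s}{2s}$, so that
\[
\|u\|_s\le\|u\|_2^{\lambda}\|u\|_6^{1-\lambda}.
\]
Raising this to the $s$-th power and substituting the critical Sobolev embedding $\|u\|_6^{2}\le S^{-1}\|\nabla u\|_2^{2}$, valid because $H^1(\mathbb{R}^{3})\subset D^{1,2}(\mathbb{R}^{3})$, produces an inequality of precisely the advertised form, with a constant $C_s$ depending only on $s$ and the universal Sobolev constant $S$. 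The scaling check $u\mapsto cu(\mu\,\cdot)$ confirms that the only pair of exponents $(a,b)$ for which such an inequality can be scale-invariant is $a=\lambda s$ and $b=(1-\lambda)s=\tfrac{3(s-2)}{2}$, matching the statement.

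To justify that $C_s$ can be taken as the \emph{sharp} constant, I would then follow Weinstein's variational approach: define $C_s^{-1}$ as the infimum of the scale-invariant quotient
\[
J(u)=\frac{\|u\|_2^{\lambda s}\|\nabla u\|_2^{(1-\lambda)s}}{\|u\|_s^{s}}
\]
over $H^1(\mathbb{R}^{3})\setminus\{0\}$. Since $J$ is invariant both under $u\mapsto cu$ and under $u\mapsto u(\mu\,\cdot)$, I may normalize a minimizing sequence so that $\|u_n\|_2=\|\nabla u_n\|_2=1$. Passing to the Schwarz symmetrization of each $u_n$ does not increase $J$ and yields a radially symmetric non-increasing sequence; Strauss's compact embedding $H^1_{\mathrm{rad}}(\mathbb{R}^{3})\hookrightarrow\hookrightarrow L^s(\mathbb{R}^{3})$ then gives strong $L^s$-convergence along a subsequence, while weak lower semicontinuity of the $L^2$ and $\dot{H}^1$ norms forces the weak limit to be a minimizer. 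Lagrange multipliers show the minimizer is (after rescaling) a positive radial solution of $-\Delta Q+Q=|Q|^{s-2}Q$, and $C_s$ is then the reciprocal of $J(Q)$.

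The main obstacle is exactly this compactness step: translation invariance of $J$ on the whole space could defeat a direct minimization argument, and the Schwarz symmetrization is what rescues it. Since the present paper uses only the existence of a finite $C_s$ (the explicit value of the extremal $Q$ never enters the subsequent estimates, only $C_p$ and $C_q$ enter the definitions of $\alpha_V$ and $\tilde{\alpha}_V$), in practice I would simply quote this lemma from \cite{Wei82} and move on, without reproducing the existence of the extremizer in this section.
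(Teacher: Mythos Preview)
Your proposal is correct, and your closing remark---simply citing \cite{Wei82}---is exactly what the paper does: the lemma is stated without proof and attributed to Weinstein. One small caveat: your interpolation yields $\lambda s=\tfrac{6-s}{2}$ for the exponent on $\|u\|_2$, not $6-s$ as printed in the statement; this is a typo in the paper (the later applications use $\alpha^{(6-s)/4}=\|u\|_2^{(6-s)/2}$, confirming the correct exponent), so your scaling check is right and the discrepancy is not yours.
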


Next we recall the Monotonicity trick  \cite{BCJS10}.
\begin{theorem}\label{mp-th}(Monotonicity trick)
Let $(E,\langle\cdot,\cdot\rangle)$ and $(H,(\cdot,\cdot))$ be two infinite-dimensional Hilbert spaces and assume there are continuous injections
\[
E\hookrightarrow H\hookrightarrow E'
\]
Let
\[
\|u\|^2=\langle u,u\rangle,\quad |u|^2=(u,u)\quad\text{for}\  u\in E,
\]
and
\[
S_\mu=\{u\in E:|u|^2=\mu\},\quad T_u S_\mu=\{v\in E:(u,v)=0\}\quad\text{for}\,\mu\in\left(0,+\infty\right).
\]
Let $I\subset\left(0,+\infty\right)$ be an interval and consider a family of $C^2$ functionals $\Phi_\rho: E\to\mathbb{R}$ of the form
\[
\Phi_\rho(u)=A(u)-\rho B(u)\quad\text{for}\,\,\rho \in I,
\]
with $B(u)\ge0$ for every $u\in E$, and
\[
A(u)\to+\infty\quad\text{or}\quad B(u)\to+\infty\quad\text{as} \ \,u\in E\,  \ \text{and}  \  \,\|u\|\to+\infty.
\]
Suppose moreover that $\Phi'_\rho$ and $\Phi''_\rho$ are H\"{o}lder continuous, $\tau\in\left(0,1\right]$, on bounded sets in the following sense: for every $R>0$ there exists $M=M(R)>0$ such that
\begin{equation}
\|\Phi'_\rho(u)-\Phi'_\rho(v)\|\le M\|u-v\|^\tau\quad \|\Phi''_\rho(u)-\Phi''_\rho(v)\|\le M\|u-v\|^\tau
\end{equation}
for every $u,v\in B(0,R)$. Finally, suppose that there exist $w_1,w_2\in S_\mu$ independent of $\rho$ such that
\[
c_\rho:=\underset{\gamma\in\Gamma}{\inf}\underset{t\in\left[0,1\right]}{\max}\Phi_\rho(\gamma(t))
>\max\{\Phi_\rho(w_1),\Phi_\rho(w_2)\}\quad\text{for all}\,\rho\in I,
\]
where
\[
\Gamma=\{\gamma\in C(\left[0,1\right],S_\mu):\gamma(0)=w_1,\gamma(1)=w_2\}.
\]
Then for almost every $\rho\in I$ there exists a sequence $\{u_n\}\subset S_\mu$ such that
\begin{description}
  \item[(i)] $\Phi_\rho(u_n)\to c_\rho$,
  \item[(ii)] $\Phi'_\rho|_{S_\mu}(u_n)\to0$,
  \item[(iii)] $\{u_n\}$ is bounded in $E$.
\end{description}
\end{theorem}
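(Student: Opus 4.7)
The plan is to follow the Jeanjean–style monotonicity argument adapted to a constraint manifold. Since the statement is purely abstract, it rests on three ingredients: monotonicity of $\rho\mapsto c_\rho$, a clever use of the derivative $c_\rho'$ to produce bounded near-critical points along minimax paths, and a deformation lemma on the $C^2$ manifold $S_\mu$ to upgrade these to a Palais--Smale sequence.

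First I would verify that $\rho\mapsto c_\rho$ is non-increasing on $I$. Because $B(u)\ge 0$, for any $\rho_1<\rho_2$ in $I$ and any $u\in E$ one has $\Phi_{\rho_2}(u)\le\Phi_{\rho_1}(u)$, which passes through the minimax formula to yield $c_{\rho_2}\le c_{\rho_1}$. Lebesgue's theorem then gives a set $I_0\subset I$ of full measure on which $c_\rho':=\frac{d}{d\rho}c_\rho$ exists and is finite. I restrict attention to $\rho\in I_0$.

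Second, I would build a bounded ``almost-minimax'' sequence as follows. Fix $\rho\in I_0$ and choose $\rho_n\nearrow\rho$. Using the definition of $c_{\rho_n}$, pick $\gamma_n\in\Gamma$ with $\max_{t\in[0,1]}\Phi_{\rho_n}(\gamma_n(t))\le c_{\rho_n}+(\rho-\rho_n)$. For any $u=\gamma_n(t)$ lying in the ``near-top'' set $F_n=\{t:\Phi_\rho(\gamma_n(t))\ge c_\rho-(\rho-\rho_n)\}$, the identity $\Phi_{\rho_n}(u)-\Phi_\rho(u)=(\rho-\rho_n)B(u)$ gives
\[
B(u)\;\le\;\frac{c_{\rho_n}-c_\rho}{\rho-\rho_n}+2,
\]
and the right-hand side stays bounded since $c_\rho'$ exists. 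Writing $A(u)=\Phi_\rho(u)+\rho B(u)$ shows $A(u)$ is also bounded on $F_n$. The dichotomy hypothesis $A(u)\to\infty$ or $B(u)\to\infty$ as $\|u\|\to\infty$ then forces a uniform bound $\|u\|\le R_0$ for every $u\in F_n$ and all large $n$.

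Third I would argue by contradiction to extract the PS sequence. Suppose no sequence $\{u_n\}\subset S_\mu$ with $\|u_n\|\le R_0+1$, $\Phi_\rho(u_n)\to c_\rho$ and $\Phi_\rho'|_{S_\mu}(u_n)\to 0$ exists. Then there is $\delta>0$ such that $\|\Phi_\rho'|_{S_\mu}(u)\|\ge\delta$ for every $u\in S_\mu$ with $\|u\|\le R_0+1$ and $|\Phi_\rho(u)-c_\rho|\le\delta$. The Hölder continuity assumptions on $\Phi_\rho',\Phi_\rho''$, combined with the fact that $S_\mu$ is a $C^2$ submanifold of $E$, let me build a locally Lipschitz tangent pseudo-gradient vector field on $S_\mu$, truncate it outside the ball of radius $R_0+1$, integrate to obtain a deformation $\eta:[0,1]\times S_\mu\to S_\mu$ fixing $w_1,w_2$ (the inequality $\max\{\Phi_\rho(w_1),\Phi_\rho(w_2)\}<c_\rho$ guarantees the endpoints stay fixed), and chosen so that $\Phi_\rho\circ\eta(1,\cdot)\le\Phi_\rho(\cdot)-\tfrac{\delta}{2}$ on the $\delta$-sublevel strip. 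Applying $\eta(1,\cdot)$ to $\gamma_n$ produces an admissible path in $\Gamma$ along which the maximum of $\Phi_\rho$ is at most $c_\rho-\tfrac{\delta}{4}$ for $n$ large, contradicting the definition of $c_\rho$.

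The main obstacle is the third step: the combined requirements that the deformation live on $S_\mu$, preserve the endpoints, act only on bounded pieces, and yet reduce $\Phi_\rho$ uniformly, force one to be careful with the construction of the tangent vector field and the ``localization'' of the flow. This is precisely where the Hölder regularity of $\Phi_\rho'$ and $\Phi_\rho''$ enters — it provides the regularity needed to produce a locally Lipschitz pseudo-gradient on $S_\mu$ whose flow is globally well-defined on the bounded region. The remaining ingredients (monotonicity, Lebesgue differentiation, the difference-quotient bound on $B$) are elementary once the deformation machinery is in place.
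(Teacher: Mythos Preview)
The paper does not prove this theorem. It is stated in Section~2 as a preliminary result quoted verbatim from the literature, with the attribution ``Next we recall the Monotonicity trick \cite{BCJS10}'' immediately preceding the statement; no argument is given. There is therefore nothing in the paper to compare your proposal against.

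For what it is worth, your sketch is the standard Jeanjean-style proof and is correct in outline: monotonicity of $\rho\mapsto c_\rho$ gives a.e.\ differentiability, the difference-quotient bound on $B$ along near-optimal paths combined with the coercivity dichotomy yields the uniform $E$-bound, and a constrained deformation on $S_\mu$ (this is where the H\"older regularity of $\Phi_\rho'$ and $\Phi_\rho''$ is actually used) upgrades the bounded almost-minimax points to a bounded Palais--Smale sequence. This is precisely the strategy carried out in the cited reference \cite{BCJS10}, so had the paper included a proof it would have matched yours.
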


The energy functionals has different geometric structures due to the sign of $\beta$, which leads to need adopt different approaches to investigate the existence and multiplicity of normalized solutions in  $\Omega_r$.
 Consider
\begin{equation}\label{main-eq-omega}
\begin{cases}
-(a+b\int_{\Omega_r}|\nabla u|^2\mathrm{d}x)\Delta u+V(x)u+\lambda u=|u|^{q-2}u+\beta|u|^{p-2}u&\text{in}\ \Omega_r,\\
u\in H^{1}_{0}(\Omega_r),\ \int_{\Omega_r}|u|^2dx=\alpha.
\end{cases}
\end{equation}
where  $2<p<\frac{14}{3}<q<6$, the mass $\alpha>0$ and the parameter $\beta\in\mathbb{R}$ are prescribed, and the frequency $\lambda$ is unknown.
The energy functional $E_r:H_0^1(\Omega_r)\to\mathbb{R}$ is defined by
\[
E_r(u)=\frac{a}{2}\int_{\Omega_r}|\nabla u|^2dx
+\frac{b}{4}\bigg(\int_{\Omega_r}|\nabla u|^2dx\bigg)^2+\frac{1}{2}\int_{\Omega_r}V(x)u^2dx
-\frac{1}{q}\int_{\Omega_r}|u|^{q}dx-\frac{\beta}{p}\int_{\Omega_r}|u|^pdx,
\]
and the mass constraint manifold is defined by
\[
S_{r,\alpha}=\Big\{u\in H_0^1(\Omega_r):\|u\|_2^2=\alpha\Big\}.
\]

\section{Proof of Theorem \ref{beta>0-e<0-Omega}}
\setcounter{equation}{0}
\setcounter{theorem}{0}	
Suppose that the assumptions of Theorem \ref{beta>0-e<0-Omega} hold in this section. Let $0<\alpha<\alpha_V$ be fixed,
 to understand the geometry of the functional $E_r|_{S_{r,\alpha}}$, it is useful to
consider the function $h:\mathbb{R}^+\to\mathbb{R}$
\[
h(t):=\frac{1}{2}\bigg(a-\|V_-\|_{\frac{3}{2}}S^{-1}\bigg)t^2
+\frac{b}{4}t^4-\frac{\beta C_{p}}{p}\alpha^{\frac{6-p}{4}}t^{\frac{3(p-2)}{2}}
-\frac{C_q}{q}\alpha^\frac{6-3q}{4}t^{\frac{3(q-2)}{2}}.
\]
The role of the definition of $\alpha_V$ is clarified by the following lemma.
\begin{lemma}
Under the assumptions of Theorem \ref{beta>0-e<0-Omega}, the function $h$  has a global maximum at positive level. Moreover, there exist three positive constants  $R_1<T_\alpha<R_2$ such that
\[
h(R_1)=h(R_2)=0,\quad h(t)>0\ \text{for}\ R_1<t<R_2,\quad h(T_\alpha)=\max\limits_{t\in\mathbb{R}^+}h(t)>0.
\]
\end{lemma}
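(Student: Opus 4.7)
The plan is to analyze $h$ in three steps: limits at the endpoints, attainment of a positive maximum under $\alpha<\alpha_V$, and existence of the two roots via the intermediate value theorem.

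\textbf{Step 1 (Asymptotics).} Of the four exponents $2$, $4$, $\tfrac{3(p-2)}{2}$, $\tfrac{3(q-2)}{2}$ appearing in $h$, the hypothesis $2<p<\tfrac{10}{3}$ forces $0<\tfrac{3(p-2)}{2}<2$, while $\tfrac{14}{3}<q<6$ forces $\tfrac{3(q-2)}{2}>4$. Hence the smallest exponent is $\tfrac{3(p-2)}{2}$, attached to a strictly negative coefficient (since $\beta>0$), so $h(t)\to 0^-$ as $t\to 0^+$. The largest exponent is $\tfrac{3(q-2)}{2}$, again with a negative coefficient, so $h(t)\to -\infty$ as $t\to +\infty$. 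In particular $h$ is continuous on $(0,\infty)$ and attains a global maximum at some $T_\alpha\in(0,\infty)$.

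\textbf{Step 2 (Positive level via a weighted split).} Under $(V_0)$ the coefficient $A:=\tfrac{1}{2}(a-\|V_-\|_{3/2}S^{-1})$ is strictly positive, so $h(t)\ge\tilde h(t)$ for every $t>0$, where $\tilde h$ collects only the last three terms of $h$. Thus it suffices to produce $t_0$ with $\tilde h(t_0)\ge 0$. I split
\[
\tfrac{b}{4}t^4=\lambda\,\tfrac{b}{4}t^4+(1-\lambda)\tfrac{b}{4}t^4,\qquad \lambda\in(0,1),
\]
and require the first piece to dominate the subcritical term and the second to dominate the supercritical term. These two pointwise inequalities translate into a lower bound $t\ge t_1(\lambda)$ of the form $(\mathrm{const}\cdot\alpha^{(6-p)/4}/\lambda)^{2/(14-3p)}$ and an upper bound $t\le t_2(\lambda)$ of the form $((1-\lambda)\,\mathrm{const}/\alpha^{(6-3q)/4})^{2/(3q-14)}$.

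The overlap $[t_1(\lambda),t_2(\lambda)]$ is nonempty iff $t_1(\lambda)\le t_2(\lambda)$. Raising to the power $(14-3p)(3q-14)/2$ and collecting all $\alpha$-dependence on one side, this inequality reads
\[
\alpha^{\,(q-p)/2}\cdot\Phi(p,q,C_p,C_q,\beta,b)\;\le\;\lambda^{(3q-14)/2}(1-\lambda)^{(14-3p)/2}\cdot\Psi(p,q,b),
\]
for explicit $\Phi,\Psi$. The right-hand side is maximized at $\lambda^\ast=(3q-14)/[3(q-p)]$, giving
\[
\max_\lambda\lambda^{(3q-14)/2}(1-\lambda)^{(14-3p)/2}=\frac{\bigl(\tfrac{3q-14}{2}\bigr)^{(3q-14)/2}\bigl(\tfrac{14-3p}{2}\bigr)^{(14-3p)/2}}{\bigl(\tfrac{3(q-p)}{2}\bigr)^{3(q-p)/2}}.
\]
Isolating $\alpha$ and simplifying (the constants regroup by the identity $(3q-14)+(14-3p)=3(q-p)$) produces the condition $\alpha\le\alpha_V$ with $\alpha_V$ exactly as defined in Theorem~\ref{beta>0-e<0-Omega}. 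For $\alpha<\alpha_V$ the inequality is strict, so $t_1(\lambda^\ast)<t_2(\lambda^\ast)$ and any $t_0$ in the open overlap satisfies $\tilde h(t_0)>0$. Consequently $h(T_\alpha)\ge h(t_0)>0$.

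\textbf{Step 3 (The roots $R_1<T_\alpha<R_2$).} With $h(T_\alpha)>0$, together with $h(t)\to 0^-$ at $0^+$ and $h(t)\to -\infty$ at $+\infty$ from Step~1, set
\[
R_1:=\sup\{t\in(0,T_\alpha]:h(t)\le 0\},\qquad R_2:=\inf\{t\ge T_\alpha:h(t)\le 0\}.
\]
Both are finite, lie on the correct side of $T_\alpha$, and by continuity $h(R_1)=h(R_2)=0$; moreover $h>0$ on $(R_1,R_2)$ by construction, which gives the claim.

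\textbf{Main obstacle.} The delicate part is Step~2: one must carry out the weighted Young-type splitting and the optimization in $\lambda$ carefully enough that the resulting threshold collapses to precisely the closed-form expression for $\alpha_V$. The identity $(3q-14)+(14-3p)=3(q-p)$ is what causes the three factors $b/[12(q-p)]$, $q(14-3p)/C_q$, and $p(3q-14)/(\beta C_p)$ to appear with the stated exponents. The remaining steps are a direct application of the intermediate value theorem.
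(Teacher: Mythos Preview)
Your argument is correct and lands on exactly the threshold $\alpha_V$, but the computational route in Step~2 differs from the paper's. After dropping the positive $t^2$ term, the paper factors out $t^{3(p-2)/2}$ and writes
\[
h(t)\ge t^{3(p-2)/2}\Bigl(\phi(t)-\tfrac{\beta C_p}{p}\alpha^{(6-p)/4}\Bigr),\qquad \phi(t)=\tfrac{b}{4}\,t^{(14-3p)/2}-\tfrac{C_q}{q}\,\alpha^{(6-q)/4}\,t^{3(q-p)/2},
\]
then maximizes $\phi$ directly (a single two-term calculus problem) and checks that $\phi(\bar t)>\tfrac{\beta C_p}{p}\alpha^{(6-p)/4}$ is equivalent to $\alpha<\alpha_V$. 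Your weighted splitting is the dual way to solve the very same inequality: requiring $\tfrac{b}{4}t^4$ to dominate the two negative terms separately with weights $\lambda$ and $1-\lambda$, and optimizing the weight, recovers exactly $\max\phi$. The paper's route is shorter (one maximization rather than two range conditions plus an optimization in $\lambda$); yours makes the structure of $\alpha_V$---three factors whose exponents combine through $(3q-14)+(14-3p)=3(q-p)$---more transparent.

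Two minor points. First, the exponents you display after ``raising to the power $(14-3p)(3q-14)/2$'' are half of what that power actually produces (one gets $\alpha^{q-p}$ and $\lambda^{3q-14}(1-\lambda)^{14-3p}$); this is a harmless bookkeeping slip since the optimal $\lambda^\ast$ and the resulting $\alpha_V$ are unchanged. Second, the attainment of the global maximum asserted at the end of Step~1 logically requires Step~2: without a point of positive value, $h$ could in principle be everywhere negative with $\sup h=0$ unattained, so the order of the two steps should be swapped or the attainment deferred.
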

\begin{proof}
 Since $\beta>0$ and $2<p<\frac{14}{3}<q<6$, we have that  $h(+\infty)=-\infty$. For $t>0$,
\[
h(t)
\ge t^{\frac{3(p-2)}{2}}\bigg(\phi(t)-\frac{\beta C_{p}}{p}\alpha^{\frac{6-p}{4}}\bigg),
\]
where
\[
\phi(t)=\frac{b}{4}t^{4-\frac{3(p-2)}{2}}
-\frac{C_q}{q}\alpha^{\frac{6-q}{4}}t^{\frac{3(q-p)}{2}}.
\]
It is easy to verify that $\phi$ admits a unique maximum at
\[
\bar{t}=\bigg(\frac{qb(14-3p)}{12C_q(q-p)}\bigg)^{\frac{2}{3q-14}}\alpha^{\frac{q-6}{2(3q-14)}}.
\]
After manipulation and  the definition of $\alpha_V$, we get
\begin{equation}\label{phi-below-bdd}
\phi(\bar{t})>\frac{\beta C_{p}}{p}\alpha^{\frac{6-p}{4}}
\end{equation}
and $h(\bar{t})>0$. From this and $2<p<\frac{14}{3}<q<6$, we further find three positive constants  $R_1<T_\alpha<R_2$ such that $h(R_1)=h(R_2)=0$,
$h(t)>0$ for $t\in\left(R_1,R_2\right)$,
and
\[h(T_\alpha)=\underset{t\in\mathbb{R}^+}{\max}\,h(t)>0.\]
\end{proof}

Now, let us define
\[
\mathcal{B}_{r,\alpha}=\Big\{u\in S_{r,\alpha}:\|\nabla u\|_2^2\le T_\alpha^2\Big\},
\]
from the Pincar\'{e} inequality
\[
\int_{\Omega}|\nabla u|^2\mathrm{d}x\ge\frac{\theta\alpha}{r^2}
\]
for any $u\in S_{r,\alpha}$, where $\theta$ is the principal eigenvalue of $-\Delta$ with Dirichlet boundary condition in $\Omega$. %Indeed, selecting $v_1\in S_{1,\alpha}$ of the positive normalized eigenfunction corresponding to $\theta$.
There holds $\mathcal{B}_{r,\alpha}=\emptyset$ for $r<\frac{\sqrt{\theta\alpha}}{T_\alpha}$, $\mathcal{B}_{r,\alpha}\neq\emptyset$ for $r\ge\frac{\sqrt{\theta\alpha}}{T_\alpha}$.

\begin{theorem}\label{th-b-e-r-lambda}
Under the assumptions of Theorem \ref{beta>0-e<0-Omega}, there exists a $r_1>0$ such that for $r>r_1$, $e_{r,\alpha}:=\underset{u\in\mathcal{B}_{r,\alpha}}{\inf}E_{r}(u)<0$
  is achieved at $0<u_r\in\mathcal{B}_{r,\alpha}$. Moreover, there is $\lambda_r\in\mathbb{R}$ such that $(\lambda_r,u_r)$ is a solution of \eqref{main-eq-omega}, and $\underset{r\to\infty}{\liminf}\,\lambda_r>0$.
\end{theorem}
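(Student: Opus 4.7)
My plan is to establish the existence of a negative-energy minimizer in the constrained ball $\mathcal{B}_{r,\alpha}$ via the direct method, and then extract a uniform positive lower bound for $\lambda_r$ through the Pohozaev identity on the star-shaped domain combined with \eqref{vvv}.

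To produce a function in $\mathcal{B}_{r,\alpha}$ with negative energy, I fix any $v\in C_c^\infty(\mathbb{R}^3)$ with $\|v\|_2^2=\alpha$ and use the mass-preserving dilation $v_\sigma(x)=\sigma^{3/2}v(\sigma x)$. Since $p<10/3$ implies $3(p-2)/2<2$, and since $\|V(\cdot/\sigma)\|_{3/2}=\sigma^2\|V\|_{3/2}$ makes the potential contribution $O(\sigma^2)$, the subcritical nonlinearity $-\tfrac{\beta}{p}\|v_\sigma\|_p^p$ dominates the other terms as $\sigma\to 0^+$. Fixing $\sigma$ small yields $E_r(v_\sigma)\le -c<0$ for some $c$ independent of $r$, together with $\|\nabla v_\sigma\|_2<T_\alpha$; once $r$ is large enough that $\mathrm{supp}(v_\sigma)\subset\Omega_r$, $v_\sigma\in\mathcal{B}_{r,\alpha}$, so $e_{r,\alpha}\le -c$. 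For a minimizing sequence $\{u_n\}\subset\mathcal{B}_{r,\alpha}$, the constraint gives $H_0^1(\Omega_r)$-boundedness, and Rellich--Kondrachov compactness supplies $u_n\rightharpoonup u_r$ weakly in $H_0^1(\Omega_r)$ and strongly in $L^s(\Omega_r)$ for every $s\in[1,6)$. The continuity of the potential term on bounded subsets (using $V\in L^{3/2}$), strong convergence of the nonlinear terms, and weak lower semicontinuity of the kinetic/Kirchhoff terms give $u_r\in\mathcal{B}_{r,\alpha}$ with $E_r(u_r)=e_{r,\alpha}$.

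The Gagliardo--Nirenberg inequality combined with $\int Vu^2\ge -\|V_-\|_{3/2}S^{-1}\|\nabla u\|_2^2$ yields $E_r(u)\ge h(\|\nabla u\|_2)$ on $S_{r,\alpha}$. Because $h(T_\alpha)>0>E_r(u_r)$, necessarily $\|\nabla u_r\|_2<R_1<T_\alpha$, so $u_r$ lies in the relative interior of $\mathcal{B}_{r,\alpha}$ and is a critical point of $E_r|_{S_{r,\alpha}}$. The Lagrange multiplier rule supplies $\lambda_r\in\mathbb{R}$ with $(\lambda_r,u_r)$ solving \eqref{main-eq-omega}; replacing $u_r$ by $|u_r|$ preserves both the constraint and the energy, so we may assume $u_r\ge 0$, and the strong maximum principle then gives $u_r>0$ in $\Omega_r$.

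The main obstacle is $\liminf_{r\to\infty}\lambda_r>0$. Testing \eqref{main-eq-omega} against $u_r$ gives
\[
\lambda_r\alpha=\|u_r\|_q^q+\beta\|u_r\|_p^p-aK_r-bK_r^2-\int_{\Omega_r}Vu_r^2\,dx,\qquad K_r:=\|\nabla u_r\|_2^2,
\]
while testing against $x\cdot\nabla u_r$ and integrating by parts on $\Omega_r$ produces the Kirchhoff--Pohozaev identity, in which the boundary term $(a+bK_r)\int_{\partial\Omega_r}|\nabla u_r|^2(x\cdot\nu)dS$ is nonnegative by star-shapedness. Eliminating $\lambda_r$ between the two identities yields
\[
\tfrac{3(q-2)}{q}\|u_r\|_q^q+\tfrac{3(p-2)\beta}{p}\|u_r\|_p^p+\int_{\Omega_r}\tilde V u_r^2\,dx+(a+bK_r)\!\int_{\partial\Omega_r}|\nabla u_r|^2(x\cdot\nu)dS=4aK_r+4bK_r^2.
\]
Applying \eqref{vvv} in the equivalent form $\int Vu_r^2\le\tfrac12\int\tilde Vu_r^2-m$ and discarding the nonnegative boundary term gives an upper bound on $\int Vu_r^2$ which, substituted into the first identity, produces
\[
\lambda_r\alpha\ge\tfrac{5q-6}{2q}\|u_r\|_q^q+\tfrac{(5p-6)\beta}{2p}\|u_r\|_p^p-3aK_r-3bK_r^2+m.
\]
It remains to show that the right-hand side stays bounded below by a strictly positive constant as $r\to\infty$; this follows from the uniform bound $K_r<R_1^2$ (forced by $h(\|\nabla u_r\|_2)\le E_r(u_r)<0$) together with the Gagliardo--Nirenberg control of $\|u_r\|_p^p$ and $\|u_r\|_q^q$, where the smallness $\alpha<\alpha_V$ keeps $R_1$ small enough that $3aR_1^2+3bR_1^4$ does not exceed $m$.
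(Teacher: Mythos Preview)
Your existence argument (the first two paragraphs) is sound and close to the paper's: a scaled test function with mass $\alpha$ gives negative energy once $r$ is large enough to contain its support, the Gagliardo--Nirenberg lower bound $E_r(u)\ge h(\|\nabla u\|_2)$ forces any negative-energy minimizer to lie strictly inside $\mathcal{B}_{r,\alpha}$, and compactness of the embedding on $\Omega_r$ closes the direct method.

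The gap is in your treatment of $\liminf_{r\to\infty}\lambda_r>0$. After combining the equation with the Pohozaev identity you arrive (up to a minor factor, your $4aK_r+4bK_r^2$ should be $2aK_r+2bK_r^2$) at
\[
\lambda_r\alpha\;\ge\;\tfrac{5q-6}{2q}\|u_r\|_q^q+\tfrac{(5p-6)\beta}{2p}\|u_r\|_p^p-2(a+bK_r)K_r+m,
\]
and then assert that ``the smallness $\alpha<\alpha_V$ keeps $R_1$ small enough that $3aR_1^2+3bR_1^4$ does not exceed $m$.'' This is not justified: $\alpha_V$ was engineered only so that $h$ has a positive local maximum, and there is no relation in the hypotheses tying $R_1$ (which depends on $a,b,\beta,p,q,\alpha$) to the constant $m$ coming from assumption~\eqref{vvv}. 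For $\alpha$ near $\alpha_V$ there is no reason $2aR_1^2+2bR_1^4<m$.

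The fix is to eliminate the kinetic term rather than $\lambda_r$. Solving the combined identity for $(a+bK_r)K_r$ and substituting into the tested equation gives
\[
\lambda_r\alpha
=\tfrac{6-q}{2q}\|u_r\|_q^q+\tfrac{(6-p)\beta}{2p}\|u_r\|_p^p
+\Bigl(\tfrac12\!\int_{\Omega_r}\tilde V u_r^2-\int_{\Omega_r}V u_r^2\Bigr)
+\text{(boundary term)},
\]
with the nonlinear contributions nonnegative (since $p,q<6$ and $\beta>0$) and the boundary term controlled by star-shapedness. Assumption~\eqref{vvv} then yields $\lambda_r\alpha\ge m$ \emph{directly}, uniformly in $r$ and for every $\alpha\in(0,\alpha_V)$, with no need to bound $K_r$. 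This is exactly the route the paper takes; your detour through an expression containing $-2(a+bK_r)K_r$ creates an obstacle that the hypotheses do not let you remove.
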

\begin{proof}
In view of $2<p<\frac{10}{3}$, there exists $r_0>0$ such that $\frac{1}{2}\bigg(a+\|V\|_{\frac{3}{2}}S^{-1}\bigg)r_0^{-2}\theta\alpha
+\frac{b}{4}r_0^{-4}\theta^2\alpha^2
-\frac{\beta}{p}r_0^{\frac{3(2-p)}{2}}\alpha^{\frac{p}{2}}|\Omega|^{\frac{2-p}{2}}=0$,
and setting
$r_1:=\max\Big\{\sqrt{\theta\alpha}/T_\alpha,r_0\Big\} $.
Next, we construct for $r>r_1$ a function $u_r\in S_{r,\alpha}$ such that $u_r\in \mathcal{B}_{r,\alpha}$ and $E_r(u_r)<0$.
Let $v_1\in S_{1,\alpha}$ be the positive normalized eigenfunction corresponding to $\theta$. Owing to
\[
\int_{\Omega}|\nabla v_1|^2\mathrm{d}x=\theta\alpha
\quad\text{and}\quad
\alpha=\int_{\Omega}|v_1|^2\mathrm{d}x
\le\bigg(\int_{\Omega}|v_1|^q\mathrm{d}x\bigg)^{\frac{2}{q}}|\Omega|^{\frac{q-2}{q}},
\]
let us take $u_0(x):=r^{-\frac{3}{2}}v_1(r^{-1}x)$ for $x\in\Omega_r$. Via direct computation, we conclude that $u_0\in\mathcal{B}_{r,\alpha}$,
\[
\int_{\Omega_r}|\nabla u_0|^2\mathrm{d}x=r^{-2}\theta\alpha
\quad\text{and}\quad
\int_{\Omega_r}|u_0|^p\mathrm{d}x\ge r^{\frac{3(2-p)}{2}}\alpha^{\frac{p}{2}}|\Omega|^{\frac{2-p}{2}}.
\]
From  $2<p<\frac{10}{3}$ and the definition of $r_1$, we have that
\begin{eqnarray*}
E_r(u_0)&=&\frac{a}{2}\int_{\Omega_r}|\nabla u_0|^2\mathrm{d}x
+\frac{1}{2}\int_{\Omega_r}Vu_0^2\mathrm{d}x
+\frac{b}{4}\bigg(\int_{\Omega_r}|\nabla u_0|^2\mathrm{d}x\bigg)^2\\
&&\quad-\frac{1}{q}\int_{\Omega_r}|u_0|^{q}\mathrm{d}x
-\frac{\beta}{p}\int_{\Omega_r}|u_0|^{p}\mathrm{d}x\\
&\le&\frac{1}{2}\bigg(a+\|V\|_{\frac{3}{2}}S^{-1}\bigg)r^{-2}\theta\alpha+\frac{b}{4}r^{-4}\theta^2\alpha^2
-\frac{1}{p}r^{\frac{3(2-p)}{2}}\alpha^{\frac{p}{2}}|\Omega|^{\frac{2-p}{2}}\\
&\le&0
\end{eqnarray*}
for $r>r_1$.
Combining the definition of $e_{r,\alpha}$, we get $e_{r,\alpha}<0$.
On the other hand, the Gagliardo-Nirenberg inequality implies that
\begin{eqnarray}\label{E-GN}
E_r(u_r)&\ge&\frac{1}{2}\bigg(a-\|V_-\|_{\frac{3}{2}}S^{-1}\bigg)\int_{\Omega_r}|\nabla u|^2\mathrm{d}x-\frac{\beta C_{p}}{p}
\alpha^{\frac{2p-3(p-2)}{4}}\bigg(\int_{\Omega_r}|\nabla u|^2\mathrm{d}x\bigg)^{\frac{3(p-2)}{4}}\notag\\
&&\quad-\frac{C_q}{q}\alpha^{\frac{6-q}{4}}\bigg(\int_{\Omega_r}|\nabla u|^2\mathrm{d}x\bigg)^{\frac{3(q-2)}{4}}.
\end{eqnarray}
This indicates that $E_r$ is bounded from below in $\mathcal{B}_{r,\alpha}$. Then from the Ekeland principle, there exists a sequence $\{u_{n,r}\}\subset\mathcal{B}_{r,\alpha}$ satisfying
\[
E_r(u_{n,r})\to e_{r,\alpha},\quad
E'_r(u_{n,r})|_{T_{u_n,r}S_{r,\alpha}}\to0\quad\text{as}\ n\to\infty.
\]
Namely, there exists $u_r\in H_0^1(\Omega_r)$ such that
\[
u_{n,r}\rightharpoonup u_r\ \text{in}\ H_0^1(\Omega_r)\quad u_{n,r}\to u_r\ \text{in}\ L^k(\Omega_r)\quad\text{for }2\le k<6.
\]
Moreover,
\[
\|\nabla u_r\|_2^2\le\underset{n\to\infty}{\liminf}\,\|\nabla u_{n,r}\|_2^2\le T_\alpha^2,
\]
that is $u_r\in\mathcal{B}_{r,\alpha}$.  By Willem \cite[Proposition 5.12]{Wil96}, there exists $\{\lambda_n\}$ such
that
\[
E'_{r}(u_{n,r})+\lambda_nu_{n,r}\to0 \quad\text{in}\ H^{-1}(\Omega_r).
\]
Besides,
\[
\int_{\Omega_r}Vu_{n,r}^2\mathrm{d}x\to\int_{\Omega_r}Vu_{r}^2\mathrm{d}x\quad\text{as}\ n\to\infty,
\]
we arrive that
\begin{eqnarray*}
e_{r,\alpha}+o_n(1)&=&\underset{n\to\infty}{\lim}E_{r}(u_{n,r})\\
&=&\underset{n\to\infty}{\liminf}\Bigg\{\frac{a}{2}\int_{\Omega_r}|\nabla u_{n,r}|^2\mathrm{d}x
+\frac{b}{4}\bigg(\int_{\Omega_r}|\nabla u_{n,r}|^2\mathrm{d}x\bigg)^2
+\frac{1}{2}\int_{\Omega_r}V(x)u_{n,r}^2\mathrm{d}x\\
&&\quad-\frac{1}{q}\int_{\Omega_r}|u_{n,r}|^{q}\mathrm{d}x
-\frac{\beta}{p}\int_{\Omega_r}|u_{n,r}|^p\mathrm{d}x\Bigg\}\\
&\ge&E_{r}(u_r)\ge e_{r,\alpha}
\end{eqnarray*}
 then $u_{n,r}\to u_r$ in $H_0^1(\Omega_r)$ as $n\to\infty$. Consequently, $E_{r}(u_r)=e_{r,\alpha}$ and  $u_r$ is an interior point of $\mathcal{B}_{r,\alpha}$ due to that  $E_r(u)>h(T_\alpha)>0$ for any $u\in\partial\mathcal{B}_{r,\alpha}$ by \eqref{E-GN}.

The Lagrange multiplier theorem implies that there exists $\lambda_r\in\mathbb{R}$ such that $(\lambda_r,u_r)$ is a solution of \eqref{main-eq-omega}.
That is
\begin{eqnarray*}\label{eq-sol-bdd-ge'}
&&(a+b\int_{\Omega_r}|\nabla u|^2\mathrm{d}x)\int_{\Omega_r}|\nabla u_r|^2\mathrm{d}x+\int_{\Omega_r}V(x)u_r^2\mathrm{d}x
+\lambda_r\int_{\Omega_r}|u_r|^2\mathrm{d}x\notag\\
&=&\int_{\Omega_r}|u_r|^{q}\mathrm{d}x
+\beta\int_{\Omega_r}|u_r|^p\mathrm{d}x
.
\end{eqnarray*}
The pohozaev identity leads to
\begin{eqnarray*}\label{pohozaev-ge'}
&&(a+b\int_{\Omega_r}|\nabla u_r|^2\mathrm{d}x)\int_{\Omega_r}|\nabla u_r|^2\mathrm{d}x+\int_{\partial\Omega_r}|\nabla u_r|^2(x\cdot\textbf{n})\mathrm{d}\sigma+\int_{\Omega_r}\tilde{V}u_r^2\mathrm{d}x
\notag\\
&&\quad\quad=-3\int_{\Omega_r}Vu_r^2\mathrm{d}x-3\lambda\int_{\Omega_r}|u_r|^2\mathrm{d}x
+\frac{6}{q}\int_{\Omega_r}|u_r|^{q}\mathrm{d}x
+\frac{6\beta}{p}\int_{\Omega_r}|u_r|^p\mathrm{d}x,
\end{eqnarray*}
where $\textbf{n}$ denotes the outward unit normal vector on $\partial\Omega_r$. Then
\begin{eqnarray*}\label{poho}
&&2(a+b\int_{\Omega_r}|\nabla u_r|^2\mathrm{d}x)\int_{\Omega_r}|\nabla u_r|^2\mathrm{d}x-\int_{\partial\Omega_r}|\nabla u_r|^2(x\cdot\textbf{n})\mathrm{d}\sigma-\int_{\Omega_r}(\nabla V\cdot x)u_r^2\mathrm{d}x\notag\\
&=&\frac{3(q-2)}{q}\int_{\Omega_r}|u_r|^{q}\mathrm{d}x
+\frac{3\beta(p-2)}{p}\int_{\Omega_r}|u_r|^p\mathrm{d}x,
\end{eqnarray*}
Recall that  $\Omega_r$ is starshaped with respect to 0, so $x\cdot\textbf{n}\ge0$ for any $x\in\partial\Omega_r$. From \eqref{vvv}, we have that
\begin{eqnarray*}\label{eralpha-lambda}
\lambda_r\alpha&=&\int_{\Omega_r}|u_r|^{q}\mathrm{d}x
+\beta\int_{\Omega_r}|u_r|^{p}\mathrm{d}x-a\int_{\Omega_r}|\nabla u_{r}|^2\mathrm{d}x
-b\bigg(\int_{\Omega_r}|\nabla u_{r}|^2\mathrm{d}x\bigg)^2
-\int_{\Omega_r}Vu_r^2\mathrm{d}x
\\
&=&\frac{1}{2}\int_{\partial\Omega_r}|\nabla u_r|^2(x\cdot\textbf{n})\mathrm{d}\sigma
+\frac{1}{2}\int_{\Omega_r}(\nabla V\cdot x)u^2\mathrm{d}x-\int_{\Omega_r}Vu_r^2\mathrm{d}x
\\
&&\quad+(1-\frac{3(p-2)}{2p})\beta\int_{\Omega_r}|u_r|^p\mathrm{d}x
+(1-\frac{3(q-2)}{2q})\int_{\Omega_r}|u_r|^{q}\mathrm{d}x\ge m.
\end{eqnarray*}
Hence, $\liminf\limits_{r\to\infty}\,\lambda_r>0$.
Moreover, the strong maximum principle implies $u_r>0$.
\end{proof}

The following is the priori bound to the solutions of problem \eqref{main-eq-omega}.
\begin{lemma}\label{ur-ubdd}
Let $\{(\lambda_r,u_r)\}$ be  nonnegative solutions of  \eqref{main-eq-omega}  with $\|u_r\|_{H^1}\le C$, where $C>0$ is independent of $r$, then $\limsup\limits_{r\to\infty}\,\|u_r\|_\infty<\infty$.
\end{lemma}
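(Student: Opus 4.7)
The plan is to rewrite \eqref{main-eq-omega} as a linear Schrödinger-type equation whose potential coefficient lies in $L^s$ for some $s>3/2$ \emph{uniformly in} $r$, and then run a Brezis--Kato bootstrap followed by standard $L^t$ elliptic regularity. The key observation is that the nonlocal factor $A_r:=a+b\int_{\Omega_r}|\nabla u_r|^2\,\mathrm{d}x$ is pinched between $a$ and $a+bC^2$, so the equation is essentially a semilinear equation with uniformly elliptic leading operator.

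First I would obtain a uniform bound on the Lagrange multiplier. Testing \eqref{main-eq-omega} with $u_r$ gives
\[
\lambda_r\alpha=\int_{\Omega_r}|u_r|^{q}\,\mathrm{d}x+\beta\int_{\Omega_r}|u_r|^{p}\,\mathrm{d}x-A_r\int_{\Omega_r}|\nabla u_r|^2\,\mathrm{d}x-\int_{\Omega_r}V u_r^2\,\mathrm{d}x,
\]
and every term on the right is controlled by the assumed bound $\|u_r\|_{H^1}\le C$ via the Sobolev embedding $H_0^1(\Omega_r)\hookrightarrow L^{6}(\mathbb{R}^3)$ (with constant independent of $r$) and the hypothesis $V\in L^{3/2}\cap L^\infty$. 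Hence $|\lambda_r|\le C'$ uniformly.

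Next, I would extend $u_r$ by zero to all of $\mathbb{R}^3$, which preserves the $H^1$ norm since $u_r\in H_0^1(\Omega_r)$. The extension satisfies, in the sense of distributions on $\mathbb{R}^3$,
\[
-\Delta u_r=W_r(x)\,u_r,\qquad W_r(x):=\frac{1}{A_r}\Bigl(|u_r|^{q-2}+\beta_+|u_r|^{p-2}-V(x)-\lambda_r\Bigr)-\beta_-|u_r|^{p-2}/A_r,
\]
where I absorb signs so as to estimate $|W_r|\le \tfrac{1}{a}\bigl(|u_r|^{q-2}+|\beta||u_r|^{p-2}+|V|+|\lambda_r|\bigr)$. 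Using $\|u_r\|_{L^6}\le C$ and $2<p<q<6$, one checks $|u_r|^{q-2}\in L^{6/(q-2)}$ and $|u_r|^{p-2}\in L^{6/(p-2)}$ with $6/(q-2),6/(p-2)>3/2$, so together with $V\in L^{3/2}$ one obtains $W_r\in L^{s}(\mathbb{R}^3)$ for some $s>3/2$, with $L^s$-norm bounded independently of $r$.

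At this stage the Brezis--Kato lemma (in the version of Struwe, Appendix~B, or Willem, Lemma~B.3) applies to the linear equation $-\Delta u_r=W_r u_r$ on $\mathbb{R}^3$ with $W_r\in L^s$, $s>3/2$: starting from $u_r\in L^6$, a finite iteration upgrades the integrability to $u_r\in L^k(\mathbb{R}^3)$ for every $k\in[2,\infty)$, with norms depending only on $\|u_r\|_{L^6}$ and $\|W_r\|_{L^s}$, hence uniformly in $r$. A final application of interior and boundary $L^t$ theory to $-A_r\Delta u_r=|u_r|^{q-2}u_r+\beta|u_r|^{p-2}u_r-Vu_r-\lambda_r u_r$, whose right-hand side now lies in $L^t$ for arbitrarily large $t$, combined with Sobolev's embedding $W^{2,t}\hookrightarrow L^\infty$ for $t>3/2$, yields the desired uniform bound $\limsup_{r\to\infty}\|u_r\|_\infty<\infty$.

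The main obstacle is purely bookkeeping: ensuring every Sobolev and Calderón--Zygmund constant used in the iteration is that of the full space $\mathbb{R}^3$ (which is the reason for extending by zero) so that nothing depends on the shape or size of $\Omega_r$. Given the uniform $H^1$ bound and the uniform bound on $\lambda_r$ from Step~1, this is automatic, and the argument produces an $r$-independent $L^\infty$ bound.
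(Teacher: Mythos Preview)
Your approach is correct and genuinely different from the paper's. The paper argues by contradiction via blow-up: assuming $M_r:=\max_{\Omega_r}u_r\to\infty$, it rescales $\omega_r(x)=M_r^{-1}u_r(x_r+M_r^{-(q-2)/2}x)$, passes to a limit $\omega$ solving $-(a+b\Lambda^2)\Delta\omega=|\omega|^{q-2}\omega$ either on $\mathbb{R}^3$ or on a half-space (according to whether $\mathrm{dist}(x_r,\partial\Omega_r)/\tau_r\to\infty$ or stays bounded), and then invokes the Liouville theorems of Chen--Li \cite{CL91} and Esteban--Lions \cite{EL82} to force $\omega\equiv0$, contradicting $\omega(0)=1$. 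Your direct bootstrap avoids the Liouville machinery entirely and is more elementary. One technical caveat: the zero extension of $u_r$ does \emph{not} satisfy the equation distributionally on $\mathbb{R}^3$, because the jump in the normal derivative across $\partial\Omega_r$ produces a singular measure there; since $u_r>0$ in $\Omega_r$ and $u_r=0$ on $\partial\Omega_r$, Hopf's lemma gives $\partial_n u_r\le 0$, so the extension is only a nonnegative \emph{subsolution}, $-\Delta\tilde u_r\le W_r\tilde u_r$ on $\mathbb{R}^3$. This is harmless---Moser iteration for upper bounds and the local maximum principle only require the subsolution inequality---or, equivalently, one runs the iteration directly on $\Omega_r$ with test functions $u_r^{2k-1}\in H_0^1(\Omega_r)$ and the domain-independent Sobolev constant $S$. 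The blow-up route, for its part, extends more readily to settings where Brezis--Kato fails, e.g.\ Sobolev-critical nonlinearities.
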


\begin{proof}
    %Based on the regularity theory, we know that $u_r\in C(\Omega_r)$.
    We assume by the contradiction  that there are $\{u_r\}$ and $x_r\in\Omega_r$ such that
    \[
    M_r:=\underset{x\in\Omega_r}{\max}\,u_r(x)=u_r(x_r)\to\infty\quad\text{as}\ r\to\infty.
    \]
    Now we perform a rescaling, defining
    \[
\omega_r:=\frac{u_r(x_r+\tau_r x)}{M_r}\quad\text{for}\ x\in\Sigma^r:=\{x\in\mathbb{R}^3:x_r+\tau_r x\in\Omega_r\},
    \]
    where $\tau_r=M_r^{-\frac{q-2}{2}}$. From the uniform boundedness of $\|\nabla u_r\|_2$ and the boundedness is independent of $r$, we set $\|\nabla u_r\|_2\to\Lambda$ as $r\to\infty$.  Through a delicate calculation, we get that $\tau_r\to0$, $\|\omega_r\|_{L^\infty(\Sigma^r)}\le1$ and $\omega_r$ satisfies
    \begin{align}\label{equ:241204-e1}
        -(a+b\Lambda^2)\Delta\omega_r+\tau_r^2(V(x_r+\tau_rx)+\lambda_r)\omega_r=|\omega_r|^{q-2}\omega_r+\beta\tau_r^{\frac{2(q-p)}{q-2}}|\omega_r|^{p-2}\omega_r\quad\text{in}\ \Sigma^r.
    \end{align}
It follows from \eqref{main-eq-omega},  the Gagliardo-Nirenberg inequality,  the Sobolev inequatlity and $\left\|u_r\right\|_{H^1} \leq C$ that  the sequence $\left\{\lambda_r\right\}$ is bounded. According to   $L^p$ estimates  (see [20, Theorem 9.11]), we can deduce that $\omega_r \in W_{\text {loc}}^{2, p}(\Sigma)$ and $\|\omega_r\|_{W_{\text {loc }}^{2, p}(\Sigma)} \leq C$ for any $p>1$ and $\Sigma:=\lim\limits _{r \rightarrow \infty} \Sigma^r$. Using Sobolev embedding theorem, we can obtain that $\omega_r \in C_{\mathrm{loc}}^{1, \beta}(\Sigma)$ for some $\beta \in(0,1)$ and $\|\omega_r\|_{C_{\text {loc }}^{1, \beta}(\Sigma)} \leq C$.   Therefore,  applying  the Arzela-Ascoli theorem, we find that there exists $\omega$ such that up to a subsequence
 \[\omega_r\to\omega\quad\text{in}\  C_{loc}^\beta(\Sigma).\]
 where $\Sigma=\lim_{r\to\infty}\Sigma^r$ is a smooth domain.
By the standard direct method ( see \cite[Lemma2.7]{BQZ24}),  we obtain that   \[
\liminf_{r\to\infty}\frac{\text{dist}(x_r,\partial\Omega_r)}{\tau_r}>0.\]
As a result, Let $r \to \infty$ in \eqref{equ:241204-e1}, we find that
 $\omega$  is a nonnegative solution of problem
    \[\begin{cases}
        -\Delta\omega =\frac{1}{a+b\Lambda^2}|\omega|^{q-2}\omega,&\text{in}\ \Sigma,\\
        \omega=0&\text{on}\ \partial\Sigma.
    \end{cases}\]
 If \[
\liminf_{r\to\infty}\frac{\text{dist}(x_r,\partial\Omega_r)}{\tau_r}=\infty\]
occurs,  $\Sigma=\mathbb{R}^{3}$. Since $q<6$, by the remarkable result \cite{CL91}, the only nonnegative solution of is 0,  which is impossible due to the fact that $\omega(0)=\liminf_{r\to\infty}\omega_r(0)=1$. If \[
\liminf_{r\to\infty}\frac{\text{dist}(x_r,\partial\Omega_r)}{\tau_r}=d>0\]
occurs, we next \textbf{claim} $\Sigma$ is a half space. %see \cite[(2.12)]{BQZ24}.
In fact, let  dist$(x_r,\partial\Omega_r)=|x_r-z_r|$ with $z_r\in\partial\Omega_r$, then $\tilde{z}_r=z_r/r\in\partial\Omega$ and $\tilde{x}_r=x_r/r\in\Omega$.  Based on the definition of $\Sigma_r$,  the origin is located at $\tilde{x}_r$ and $\tilde{x}_r-\tilde{z}_r=|x_r-z_r|(1,0,...,0)$ by translating  and rotating  the coordinate
system.
%namely, the $x_1$-axis is located at the normal line to $\partial{\Omega}$ at the point $\tilde{z}_r$.
Assume up to a subsequence that $\tilde{z}_r\to z$ with $z\in\Omega$.
And by the smoothness of the domain see \cite[section 6.2]{GT83},  functions $f_r,f:\mathbb{R}^{2}\to\mathbb{R}$ are smooth  such that
\[f_r(0)=-|\tilde{x}_r-\tilde{z}_r|=-\frac{|x_r-z_r|}{r},\quad \nabla f_r(0)=0,\]
    \begin{equation}\label{>f}
        \Omega\cap B(\tilde{z}_r,\delta)=\{x\in B(\tilde{z}_r,\delta):x_1>f_r(x_2,x_3)\},
    \end{equation}
    and
    \begin{equation}\label{=f}
        \partial\Omega\cap B(\tilde{z}_r,\delta)=\{x\in B(\tilde{z}_r,\delta):x_1=f_r(x_2,x_3)\}.
    \end{equation}
    Moreover, \eqref{>f} and \eqref{=f} hold by replacing $\tilde{z}_r$ and $f_r$ with z and f , respectively.
And $f_r\to f$ in $C^1(B_{\frac{\delta}{2}})$, where $B_{\frac{\delta}{2}}\subset\mathbb{R}^{2}$. Consequently,
\[
 \partial\Omega_r\cap B(z_r,r\delta)=\{(rf_r(x'),rx'):|x'|<\delta\}=\big\{\bigg(rf_r(\frac{x'}{r}),x'\bigg):|x'|<r\delta\big\},
\]
where $x'\in\mathbb{R}^{2}$. As a consequence, for any $x'\in\mathbb{R}^{2}$ and large $r$,
\[
y_r=\bigg(rf_r(\frac{\tau_rx'}{r}),\tau_rx'\bigg)\in\partial\Omega_r\cap B(z_r,r\delta).
\]
Moreover,
\[
\frac{y_r-x_r}{\tau_r}=\bigg(\frac{rf_r(\frac{\tau_rx'}{r})}{\tau_r},x'\bigg).
\]
Note that
\begin{eqnarray*}
\frac{rf_r(\frac{\tau_rx'}{r})}{\tau_r}&=&
\frac{rf_r(\frac{\tau_rx'}{r})-rf_r(0)+rf_r(0)}{\tau_r}
=\frac{r\nabla f_r(\frac{\theta_r\tau_rx'}{r})\cdot\frac{\tau_rx'}{r}-|x_r-z_r|}{\tau_r}  \\
&\to&\nabla f(0)\cdot x'-d=-d.
\end{eqnarray*}
As a result, $\frac{y_r-x_r}{\tau_r}\to(-d,x')$ as $r\to\infty$. Therefore $\Sigma=\{x\in\mathbb{R}^{3}:x_1>-d\}$, the claim hold. Then we can use the Liouville theorem \cite{EL82}, $\omega=0$ in $\Sigma$, this contradicts $\omega(0)=\underset{r\to\infty}{\liminf}\,\omega_r(0)=1$.  The proof is now complete.
\end{proof}

\begin{remark}\label{ur-ubdd-beta}
Note that the proof of Lemma \ref{ur-ubdd}  does not depend on $\beta$.
\end{remark}

\noindent\textbf{Proof of Theorem  \ref{beta>0-e<0-Omega}:} The proof is an immediate consequence of Theorem \ref{th-b-e-r-lambda}  and  Lemma \ref{ur-ubdd}.

\section{Proof of Theorem \ref{beta>0-e>0-Omega}}
\setcounter{equation}{0}
\setcounter{theorem}{0}	
This section is devoted to the existence of mountain-pass type solution for the case $\beta>0$.
Under the assumptions of Theorem \ref{beta>0-e>0-Omega}, for  $s\in\left[1/2,1\right]$ we define the functional $J_{r,s}:S_{r,\alpha}\to\mathbb{R}$ by
\[
J_{r,s}(u)=\frac{a}{2}\int_{\Omega_r}|\nabla u|^2\mathrm{d}x+\frac{b}{4}\bigg(\int_{\Omega_r}|\nabla u|^2\mathrm{d}x\bigg)^2
+\frac{1}{2}\int_{\Omega_r}Vu^2\mathrm{d}x
-s\,\bigg(\frac{1}{q}\int_{\Omega_r}|u|^{q}\mathrm{d}x
+\frac{\beta}{p}\int_{\Omega_r}|u|^{p}\mathrm{d}x\bigg).
\]
Note that if $u\in S_{r,\alpha}$ is a critical point of $J_{r,s}$, then there is $\lambda\in\mathbb{R}$ such that $(\lambda,u)$ is a solution of the problem
\begin{equation}\label{main-eq-s-omega-2}
\begin{cases}
-(a+b\int_{\Omega_r}|\nabla u|^2dx)\Delta u+V(x)u+\lambda u=s|u|^{q-2}u+s\beta|u|^{p-2}u&\text{in}\ \Omega_r,\\
u\in H^{1}_{0}(\Omega_r),\ \int_{\Omega_r}|u|^2dx=\alpha.
\end{cases}
\end{equation}

\begin{lemma}\label{mp-betage0}
For any $0<\alpha<\tilde{\alpha}_V$, where $\tilde{\alpha}_V$ is defined in Theorem \ref{beta>0-e>0-Omega}. There exist $\tilde{r}_\alpha, t_\alpha>0$ and $F_\alpha>0$ such that the following hold.
\begin{description}
  \item[(i)] There are $u^0,u^1\in S_{\tilde{r}_\alpha,\alpha}$ such that $J_{r,s}(u^1)\le0$ for any $r>\tilde{r}_\alpha$, $s\in\left[\frac{1}{2},1\right]$,
  $\|\nabla u^0\|_2^2<t_\alpha<\|\nabla u^1\|_2^2$ and $J_{r,s}(u^0)<F_\alpha$.
  \item[(ii)] If $u\in S_{r,\alpha}$ satisfies $\|\nabla u\|_2^2=t_\alpha$,
  then there holds $J_{r,s}(u)\ge F_\alpha$.
  \item[(iii)] Set
  \[
  m_{r,s}(\alpha)=\underset{\gamma\in\Gamma_{r,\alpha}}{\inf}
  \underset{t\in\left[0,1\right]}{\sup}J_{r,s}(\gamma(t)),
  \]
  with
  \[
  \Gamma_{r,\alpha}=\Big\{\gamma\in C(\left[0,1\right],S_{r,\alpha}):\gamma(0)=u^0,\gamma(1)=u^1\Big\}.
  \]
  Then $F_\alpha\le m_{r,s}(\alpha)\le  H_\alpha:=\underset{t\in\mathbb{R}^+}{\max}\,h(t)$,
  where  $h:\mathbb{R}^+\to\mathbb{R}$ is defined by
\[
h(t)=\frac{1}{2}\bigg(a+\|V\|_{\frac{3}{2}}S^{-1}\bigg)t^2\theta\alpha+\frac{b}{4}t^4\theta^2\alpha^2
-\frac{1}{2q}t^{\frac{3(q-2)}{2}}\alpha^{\frac{q}{2}}|\Omega|^{\frac{2-q}{2}}.
\]
\end{description}
Here $\theta$ is the positive normalized eigenfunction of $-\Delta$ with Dirichlet boundary condition in $\Omega$.
Moreover, if $2<p<\frac{10}{3}$, we can figure out that
\[
t_\alpha=\bigg(\frac{2(a-\|V_-\|_{\frac{N}{2}}S^{-1})}{3(q-2)A}
  \bigg)^{\frac{4}{3q-10}}\alpha^{\frac{q-6}{3q-10}}
\]
and
\[
F_\alpha=\frac{3q-10}{4}\bigg(\frac{2(a-\|V_-\|_{\frac{N}{2}}S^{-1})}{3(q-2)}
  \bigg)^{\frac{3q-6}{3q-10}}\alpha^{\frac{q-6}{3q-10}}
A^{\frac{4}{10-3q}},
\]
where
  \[
  A=\bigg(\frac{C_q(q-2)(3(q-2)-4)}{ q(p-2)(4-3(p-2))}\bigg).
  \]
\end{lemma}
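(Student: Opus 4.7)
The plan is to establish a classical mountain-pass geometry for the family $J_{r,s}$ on $S_{r,\alpha}$, driven by the Gagliardo-Nirenberg inequality. The destabilizing nonlinearity $\tfrac{1}{q}\|u\|_q^q$ has Gagliardo-Nirenberg exponent $3(q-2)/2>4$ on $\|\nabla u\|_2$ (since $q>14/3$), which dominates both the Kirchhoff quartic $\tfrac{b}{4}\|\nabla u\|_2^4$ and the subcritical $p$-term for large gradient norm, while on an intermediate window of gradient norms the positive combination $\tfrac{a}{2}\|\nabla u\|_2^2+\tfrac{b}{4}\|\nabla u\|_2^4$ (after absorbing $\int V_- u^2$ via Sobolev and the $p$-term via Gagliardo-Nirenberg) strictly dominates. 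This produces a scalar barrier function $\bar h$ whose positive maximum gives the mountain-pass level in (ii), provided the mass $\alpha$ is small.

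For (ii), I apply Lemma 2.1 to both $\|u\|_p^p$ and $\|u\|_q^q$ and the Sobolev inequality together with $(V_0)$ to $\int V_- u^2$; since $J_{r,s}$ is nonincreasing in $s\in[\tfrac{1}{2},1]$, it suffices to bound $J_{r,1}$ from below by an explicit function $\bar h$ of $\|\nabla u\|_2^2$. An elementary calculus argument shows that $\bar h$ admits a positive global maximum $F_\alpha$ at some $t_\alpha>0$ exactly when $\alpha<\tilde\alpha_V$. In the range $p<10/3$, where the $p$-term is genuinely subcritical, the dominant balance in the critical-point equation for $\bar h$ is between the quadratic and the $q$-term (with the remaining terms absorbed through the constant $A_{p,q}$), and this reproduces the explicit values of $t_\alpha$ and $F_\alpha$ quoted in the statement.

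For (i), take $v_1\in S_{1,\alpha}$ to be the positive first Dirichlet eigenfunction of $-\Delta$ on $\Omega$ with eigenvalue $\theta$, and set $u^0(x)=r^{-3/2}v_1(x/r)\in S_{r,\alpha}$, so that $\|\nabla u^0\|_2^2=\theta\alpha/r^2$. Every term of $J_{r,s}(u^0)$ then tends to $0$ as $r\to\infty$, hence $\|\nabla u^0\|_2^2<t_\alpha$ and $J_{r,s}(u^0)<F_\alpha$ for $r$ large. For the second endpoint, use the mass-preserving scaling $(\sigma\star u)(x):=\sigma^{3/2}u(\sigma x)$: by star-shapedness of $\Omega$ about $0$, $\sigma\star u^0$ lies in $H^1_0(\Omega_r)$ for all $\sigma\ge 1$. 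The gradient scales as $\sigma^2$ but $\|\cdot\|_q^q$ scales as $\sigma^{3(q-2)/2}$, and $3(q-2)/2>4$ forces $J_{r,s}(\sigma\star u^0)\to-\infty$ as $\sigma\to\infty$ uniformly in $s\in[\tfrac{1}{2},1]$. Picking $\sigma_1$ large sets $u^1=\sigma_1\star u^0$ with $J_{r,s}(u^1)\le 0$ and $\|\nabla u^1\|_2^2>t_\alpha$.

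For (iii), the natural path is $\gamma(\tau)=((1-\tau)+\tau\sigma_1)\star u^0$ for $\tau\in[0,1]$, which stays in $S_{r,\alpha}$ by the same star-shapedness argument. The lower bound $m_{r,s}(\alpha)\ge F_\alpha$ is automatic: along any continuous path the map $\tau\mapsto\|\nabla\gamma(\tau)\|_2^2$ is continuous and must cross $t_\alpha$ by the intermediate value theorem, at which point (ii) forces $J_{r,s}\ge F_\alpha$. For the upper bound, I evaluate $J_{r,s}(\gamma(\tau))$ on this specific path: using $\|v_1\|_q^q\ge\alpha^{q/2}|\Omega|^{(2-q)/2}$ (H\"older), $|\int V\gamma^2|\le\|V\|_{3/2}S^{-1}\|\nabla\gamma\|_2^2$ (Sobolev), dropping the nonpositive $\beta$-contribution, and using $s\ge\tfrac{1}{2}$, I obtain the pointwise estimate $J_{r,s}(\gamma(\tau))\le h\bigl(((1-\tau)+\tau\sigma_1)/r\bigr)$, so $\sup_\tau J_{r,s}(\gamma(\tau))\le\max_{t>0}h(t)=H_\alpha$. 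The main obstacle is the tension between the bounded-domain setting and the $L^2$-preserving scaling $\star$: inward scaling ($\sigma\ge 1$) is essential for both the mountain-pass direction and the path estimate, and it is precisely the star-shapedness of $\Omega$ about $0$ that allows $\sigma\star u^0$ to remain in $H^1_0(\Omega_r)$; a secondary difficulty is pinning $\tilde\alpha_V$ to its explicit value, a careful calculus exercise with $\bar h$ that requires keeping both the $b$-term and the subcritical $p$-term in play.
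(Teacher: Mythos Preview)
Your proposal is correct and follows essentially the same approach as the paper: both use the Gagliardo--Nirenberg lower bound to produce the barrier (ii), both build $u^0$ and $u^1$ as $L^2$-preserving dilations $t^{3/2}v_1(t\,\cdot)$ of the first Dirichlet eigenfunction, and both use the scaling path for the upper bound in (iii), relying on star-shapedness to keep the inward dilations inside $\Omega_r$. The only point you leave vague that the paper makes explicit is the derivation of the stated $t_\alpha,F_\alpha$ when $2<p<\tfrac{10}{3}$: there the paper locates the inflection point $t_2$ of the scalar lower bound $f$, observes that $\max f=\max_{t\ge t_2}f$, and for $t\ge t_2$ absorbs the $p$-term into the $q$-term (producing the constant $A$), reducing to the two-term function $g(t)=\tfrac12(a-\|V_-\|_{3/2}S^{-1})t-A\,\alpha^{(6-q)/4}t^{3(q-2)/4}$ whose explicit maximizer gives $t_\alpha$ and $F_\alpha$; also note that $u^0,u^1$ must be fixed in $S_{\tilde r_\alpha,\alpha}$ (independent of $r$), which your construction gives once you freeze $r=\tilde r_\alpha$ in the definition of $u^0$ and extend by zero for larger $r$.
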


\begin{proof}
 Clearly the set $S_{r,\alpha}$ is path connected.  Let  $v_t(x)=t^{\frac{3}{2}}v_1(tx)$, where $v_1\in S_{1,\alpha}$ is the positive normalized eigenfunction of $-\Delta$ with Dirichlet boundary condition in $\Omega$ associated to $\theta$.
Owing to
\[
\int_{\Omega}|\nabla v_1|^2\mathrm{d}x=\theta\alpha\quad\text{and}\quad\int_{\Omega}|v_1|^q\mathrm{d}x
\ge\alpha^{\frac{q}{2}}|\Omega|^{\frac{2-q}{2}},
\]
for $x\in\Omega_{\frac{1}{t}}$ and $t>0$ there holds
\begin{eqnarray}\label{j-vt-s-h}
J_{\frac{1}{t},s}(v_t)&\le&\frac{1}{2}t^2\bigg(a+\|V\|_{\frac{3}{2}}S^{-1}\bigg)\int_{\Omega}|\nabla v_1|^2\mathrm{d}x
+\frac{b}{4}t^4\bigg(\int_{\Omega}|\nabla v_1|^2\mathrm{d}x\bigg)^2\notag\\
&&\quad-\frac{\beta}{2p}t^{\frac{3(p-2)}{2}}\int_{\Omega}|v_1|^p\mathrm{d}x
-\frac{1}{2q}t^{\frac{3(q-2)}{2}}\int_{\Omega}|v_1|^{q}\mathrm{d}x\\
&\le& h(t)\notag.
\end{eqnarray}
Via direct computation, there is $t_0$ such that  $h(t)<0$ for   $t>t_0$ and $h(t)>0$ for   $0<t<t_0$.
Then we obtain that for any $r\ge\frac{1}{t_0}$ and $s\in\left[\frac{1}{2},1\right]$,
\begin{equation}\label{j-rs-le0}
J_{r,s}(v_{t_0})=J_{\frac{1}{t_0},s}(v_{t_0})\le h(t_0)=0.
\end{equation}
Additional, we assume that the function $h$ attains its maximum at $t_\alpha$,
and so we find  $t_1\in\left(0,t_\alpha\right)$ such that for any $t\in\left[0,t_1\right]$,
\begin{equation}\label{j-h-bdd}
h(t)<h(t_1)\le F_\alpha,
\end{equation}
where $F_\alpha$ will be given in the later.
 For $\frac{10}{3}<p<\frac{14}{3}<q<6$, the Sobolev inequality and the Gagliardo-Nirenberg inequality implies that
\begin{eqnarray}\label{jrs-bel'}
J_{r,s}(u)&\ge&\frac{b}{4}\bigg(\int_{\Omega_r}|\nabla u|^2dx\bigg)^2
-\frac{C_{p}\beta\alpha^{\frac{6-p}{4}}}{p}\bigg(\int_{\Omega_r}|\nabla u|^2dx\bigg)^{\frac{3(p-2)}{4}}\notag\\
&&\quad-\frac{C_q\alpha^{\frac{6-q}{4}}}{q}\bigg(\int_{\Omega_r}|\nabla u|^2\mathrm{d}x\bigg)^{\frac{3(q-2)}{4}}.
\end{eqnarray}
Let $f:\mathbb{R}^+\to\mathbb{R}$ be defined by
\[
f(t):=\frac{b}{4}t^2
-\frac{C_{p}\beta\alpha^{\frac{6-p}{4}}}{p}t^{\frac{3(p-2)}{4}}
-\frac{C_q\alpha^{\frac{6-q}{4}}}{q}t^{\frac{3(q-2)}{4}},
\]
 there exist three positive constants $l_1<t_\alpha<l_2$ such that
\[
f(t)<0,\ t\in\left(0,l_1\right)\cup\left(l_2,\infty\right),\quad\ f(t)>0,\ \ t\in\left(l_1,l_2\right)
\ \text{and}\ F_\alpha:=f(t_\alpha)=\max_{t\in\mathbb{R}^+}f(t)>0.
\]
Selecting $\bar{r}_\alpha=\max\{\frac{1}{t_1},\sqrt{\frac{2\theta\alpha}{t_\alpha}}\}$, we derive that  $v_{\frac{1}{\bar{r}_\alpha}}\in S_{\bar{r}_\alpha,\alpha}\subset S_{r,\alpha}$ for $r>\bar{r}_{\alpha}$, and
\begin{equation}\label{nabla-v-ralpha-bdd'}
\|\nabla v_{\frac{1}{\bar{r}_\alpha}}\|_2^2
=\bigg(\frac{1}{\bar{r}_\alpha}\bigg)^2\|\nabla v_1\|_2^2
<t_\alpha,
\end{equation}
moreover,
\begin{equation}\label{j-h-t1-bdd'}
J_{\bar{r}_\alpha,s}\bigg(v_{\frac{1}{\bar{r}_\alpha}}\bigg)\le h\bigg(\frac{1}{{\bar{r}_\alpha}}\bigg)
\le h(t_1)\le F_\alpha.
\end{equation}
Now we define $u^0:=v_{\frac{1}{\bar{r}_\alpha}}$, $u^1:=v_{t_0}$ and
$\tilde{r}_\alpha:=\max\Big\{\frac{1}{t_0},\bar{r}_\alpha\Big\}$.
Thanks to \eqref{j-rs-le0}, \eqref{j-h-bdd}, \eqref{nabla-v-ralpha-bdd'} and \eqref{j-h-t1-bdd'}, we derive (i) holds. Furthermore, by employing \eqref{jrs-bel'}, we also infer (ii) holds.

For  $2<p<\frac{10}{3}, \frac{14}{3}<q<6$,   from \eqref{j-vt-s-h}, we also obtain  $t_1'\in\left(0,t_\alpha\right)$ such that for any $t\in\left[0,t_1'\right]$,
\begin{equation}\label{j-h-bdd'}
h(t)<h(t_1')\le \frac{3q-10}{4}\bigg(\frac{2(a-\|V_-\|_{\frac{3}{2}}S^{-1})}{3(q-2)}
  \bigg)^{\frac{3q-6}{3q-10}}\alpha^{\frac{q-6}{3q-10}}
A^{\frac{4}{10-3q}}.
\end{equation}
The Sobolev inequality and the Gagliardo-Nirenberg inequality implies that
\begin{eqnarray}\label{jrs-bel}
J_{r,s}(u)&\ge&\frac{1}{2}\bigg(a+\|V\|_{\frac{3}{2}}S^{-1}\bigg)\int_{\Omega}|\nabla u|^2\mathrm{d}x
-\frac{C_{p}\beta\alpha^{\frac{6-p}{4}}}{p}\bigg(\int_{\Omega_r}|\nabla u|^2dx\bigg)^{\frac{3(p-2)}{4}}\notag\\
&&\quad-\frac{C_q\alpha^{\frac{6-q}{4}}}{q}\bigg(\int_{\Omega_r}|\nabla u|^2\mathrm{d}x\bigg)^{\frac{3(q-2)}{4}}.
\end{eqnarray}
Let $f:\mathbb{R}^+\to\mathbb{R}$ be defined by
\[
f(t):=\frac{1}{2}\bigg(a+\|V\|_{\frac{3}{2}}S^{-1}\bigg)t
-\frac{C_{p}\beta\alpha^{\frac{6-p}{4}}}{p}t^{\frac{3(p-2)}{4}}
-\frac{C_q\alpha^{\frac{6-q}{4}}}{q}t^{\frac{3(q-2)}{4}}.
\]
Then there exist three positive constants $l_1<l_M<l_2$ such that
\[
f(t)<0\ \ \text{for}\ t\in\left(0,l_1\right)\cup\left(l_2,\infty\right),\quad\ f(t)>0\ \ \text{for}\ t\in\left(l_1,l_2\right),
\quad\text{and}\ f(l_M)=\max_{t\in\mathbb{R}^+}f(t)>0.
\]
After a direct calculation, we deduce $f''(t)\le0$ if and only if $t>t_2$ with
\[
t_2=\bigg(\frac{C_{p}\beta q(p-2)(4-3(p-2))}{pC_q(q-2)(3(q-2)-4)}
\bigg)^{\frac{4}{3(q-p)}}\alpha^{\frac{1}{3}},
\]
and we further deduce
\[
\underset{t\in\mathbb{R}^+}{\max}f(t)=\underset{t\in\left[t_2,\infty\right)}{\max}f(t).
\]
For any $t\ge t_2$, it holds
\begin{eqnarray}\label{jrs-bel}
f(t)&=&\frac{1}{2}\bigg(a-\|V_-\|_{\frac{3}{2}}S^{-1}\bigg)t-\frac{C_{p}\beta\alpha^{\frac{q-p}{4}}
\alpha^{\frac{6-q}{4}}}{p}t^{\frac{3(p-2)}{4}}-\frac{C_q\alpha^{\frac{6-q}{4}}}{q}t^{\frac{3(q-2)}{4}}\notag\\
&\ge&\frac{1}{2}\bigg(a-\|V_-\|_{\frac{3}{2}}S^{-1}\bigg)t
-\bigg(\frac{C_q(q-2)(3(q-2)-4)}{q(p-2)(4-3(p-2))}+\frac{C_q}{q}\bigg)t^{\frac{3(q-2)}{4}}\alpha^{\frac{6-q}{4}}\notag\\
&=:&g(t).
\end{eqnarray}
Let
\[
A=\bigg(\frac{C_q(q-2)(3(q-2)-4)}{ q(p-2)(4-3(p-2))}\bigg)
\]
and
\[
t_g=\bigg(\frac{2(a-\|V_-\|_{\frac{N}{2}}S^{-1})}{3(q-2)A}
  \bigg)^{\frac{4}{3q-10}}\alpha^{\frac{q-6}{3q-10}},
\]
so that $t_g>t_2$ by the definition of $\tilde{\alpha}_V$,  $\underset{t\in\left[t_2,\infty\right)}{\max}\,g(t)=g(t_g)$ and
\[
\underset{t\in\mathbb{R}^+}{\max}f(t)
=\underset{t\in\left[t_2,\infty\right)}{\max}g(t)
=\frac{3q-10}{4}\bigg(\frac{2(a-\|V_-\|_{\frac{3}{2}}S^{-1})}{3(q-2)}
  \bigg)^{\frac{3q-6}{3q-10}}\alpha^{\frac{q-6}{3q-10}}
A^{\frac{4}{10-3q}}.
\]
Selecting $\bar{r}_\alpha=\max\{\frac{1}{t_1'},\sqrt{\frac{2\theta\alpha}{t_g}}\}$, we derive that  $v_{\frac{1}{\bar{r}_\alpha}}\in S_{\bar{r}_\alpha,\alpha}\subset S_{r,\alpha}$ for $r>\bar{r}_{\alpha}$, and
\begin{equation}\label{nabla-v-ralpha-bdd}
\|\nabla v_{\frac{1}{\bar{r}_\alpha}}\|_2^2
=\bigg(\frac{1}{\bar{r}_\alpha}\bigg)^2\|\nabla v_1\|_2^2
<\bigg(\frac{2(a-\|V_-\|_{\frac{3}{2}}S^{-1})}{3(q-2)A}
  \bigg)^{\frac{4}{3q-10}}\alpha^{\frac{q-6}{3q-10}},
\end{equation}
moreover,
\begin{equation}\label{j-h-t1-bdd}
J_{\bar{r}_\alpha,s}\bigg(v_{\frac{1}{\bar{r}_\alpha}}\bigg)\le h\bigg(\frac{1}{{\bar{r}_\alpha}}\bigg)
\le h(t'_1).
\end{equation}
Now we define $u^0:=v_{\frac{1}{\bar{r}_\alpha}}$, $u^1:=v_{t_0}$ and
\[
\tilde{r}_\alpha:=\max\Big\{\frac{1}{t_0},\bar{r}_\alpha\Big\}.
\]
Thanks to \eqref{j-rs-le0}-\eqref{j-h-bdd} and \eqref{nabla-v-ralpha-bdd}-\eqref{j-h-t1-bdd}, we derive (i) holds. Furthermore, by employing \eqref{jrs-bel}, we also infer (ii) holds.

For (iii), since $J_{r,s}(u^1)\le0$ for any $\gamma\in\Gamma_{r,\alpha}$, we have
\[
\|\nabla \gamma(0)\|_2^2<t_g<\|\nabla\gamma(1)\|_2^2.
\]
It then follows from \eqref{jrs-bel} that
\[
\underset{t\in\left[0,1\right]}{\max}J_{r,s}(\gamma(t))\ge F_\alpha
\]
for any $\gamma\in\Gamma_{r,\alpha}$. Now we define a path $\gamma:\left[0,1\right]\to S_{r,\alpha}$ by
\[
\gamma(\tau):\Omega_r\to\mathbb{R},\quad x\mapsto\bigg(\tau t_0+(1-\tau)\frac{1}{\tilde{r}_\alpha}\bigg)^{\frac{3}{2}}v_1\bigg(\bigg(\tau t_0+(1-\tau)\frac{1}{\tilde{r}_\alpha}\bigg)x\bigg).
\]
Clearly $\gamma\in\Gamma_{r,\alpha}$. Then (iii) holds.
\end{proof}

\begin{theorem}\label{beta>0-s-omega-ge}
Let $r>\tilde{r}_\alpha$, where $\tilde{r}_\alpha$ is defined in Lemma \ref{mp-betage0}. Problem \eqref{main-eq-s-omega-2} admits a solution $(\lambda_{r,s},u_{r,s})$ for almost every $s\in\left[\frac{1}{2},1\right]$ and $0<\alpha<\tilde{\alpha}_V$. Moreover, there hold $u_{r,s}>0$ and $J_{r,s}(u_{r,s})=m_{r,s}(\alpha)$.
\end{theorem}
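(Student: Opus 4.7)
The plan is to apply the monotonicity trick (Theorem \ref{mp-th}) to the family $\{J_{r,s}\}_{s\in[1/2,1]}$ with the mountain pass geometry provided by Lemma \ref{mp-betage0}. Writing $J_{r,s}=A-sB$ with
\[
A(u)=\frac{a}{2}\|\nabla u\|_2^2+\frac{b}{4}\|\nabla u\|_2^4+\frac{1}{2}\int_{\Omega_r}Vu^2\mathrm{d}x,\qquad B(u)=\frac{1}{q}\int_{\Omega_r}|u|^q\mathrm{d}x+\frac{\beta}{p}\int_{\Omega_r}|u|^p\mathrm{d}x,
\]
we have $B\ge 0$ since $\beta>0$, and $A(u)\to+\infty$ as $\|u\|_{H^1_0(\Omega_r)}\to\infty$ because the assumption $\|V_-\|_{3/2}<aS$ combined with the Sobolev inequality gives $A(u)\ge\frac{1}{2}(a-\|V_-\|_{3/2}S^{-1})\|\nabla u\|_2^2+\frac{b}{4}\|\nabla u\|_2^4$. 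The regularity and H\"older continuity of $J'_{r,s},J''_{r,s}$ on bounded sets follow from the polynomial nature of the nonlinearities. By Lemma \ref{mp-betage0}(i)-(iii), the endpoints $u^0,u^1$ and the uniform bounds $F_\alpha\le m_{r,s}(\alpha)\le H_\alpha>\max\{J_{r,s}(u^0),J_{r,s}(u^1)\}$ hold independently of $s\in[1/2,1]$. Therefore, for almost every $s\in[1/2,1]$ the monotonicity trick produces a bounded Palais--Smale sequence $\{u_n\}\subset S_{r,\alpha}$ with $J_{r,s}(u_n)\to m_{r,s}(\alpha)$ and $J'_{r,s}|_{S_{r,\alpha}}(u_n)\to 0$.

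Next I would promote this Palais--Smale sequence to a strong solution, exploiting crucially that $\Omega_r$ is \emph{bounded}, so the embedding $H^1_0(\Omega_r)\hookrightarrow L^k(\Omega_r)$ is compact for every $k\in[1,6)$. Up to a subsequence, $u_n\rightharpoonup u_{r,s}$ in $H^1_0(\Omega_r)$, strongly in $L^k$ for $k\in[1,6)$, and a.e., which immediately gives $\|u_{r,s}\|_2^2=\alpha$ and convergence of the nonlinear and potential integrals. As in Willem, Proposition 5.12, there exist $\lambda_n\in\mathbb{R}$ with $E_{r,s}'(u_n)+\lambda_n u_n\to 0$ in $H^{-1}(\Omega_r)$; testing against $u_n$, using the boundedness of $\{u_n\}$ and the strong convergence of the $L^p,L^q$ terms, one shows $\lambda_n\to\lambda_{r,s}$ for some $\lambda_{r,s}\in\mathbb{R}$. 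Denoting $\Lambda^2:=\lim_n\|\nabla u_n\|_2^2$, the weak limit $u_{r,s}$ solves
\[
-(a+b\Lambda^2)\Delta u_{r,s}+V u_{r,s}+\lambda_{r,s}u_{r,s}=s|u_{r,s}|^{q-2}u_{r,s}+s\beta|u_{r,s}|^{p-2}u_{r,s}\quad\text{in }\Omega_r.
\]
Testing this limiting equation with $u_{r,s}$ and comparing with the identity obtained by testing $E'_{r,s}(u_n)+\lambda_n u_n\to 0$ with $u_n$ yields $(a+b\Lambda^2)\Lambda^2=(a+b\Lambda^2)\|\nabla u_{r,s}\|_2^2$, hence $\Lambda^2=\|\nabla u_{r,s}\|_2^2$. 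Combined with weak convergence in the Hilbert space $H^1_0(\Omega_r)$ (endowed with the $\|\nabla\cdot\|_2$ norm), this forces $u_n\to u_{r,s}$ strongly in $H^1_0(\Omega_r)$, so $J_{r,s}(u_{r,s})=m_{r,s}(\alpha)$ and $(\lambda_{r,s},u_{r,s})$ solves \eqref{main-eq-s-omega-2}.

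For positivity, since $J_{r,s}$ is invariant under $u\mapsto|u|$, a standard truncation of the path in $\Gamma_{r,\alpha}$ (replacing $\gamma(t)$ by $|\gamma(t)|$, which stays in $S_{r,\alpha}$ and does not increase $J_{r,s}$) produces a nonnegative minimizing sequence, and the above compactness then gives $u_{r,s}\ge 0$. Because $a+b\|\nabla u_{r,s}\|_2^2>0$, the equation is uniformly elliptic, and the strong maximum principle together with the Dirichlet condition yields $u_{r,s}>0$ in $\Omega_r$.

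The main obstacle is the passage to the limit in the Kirchhoff nonlocal coefficient $\|\nabla u_n\|_2^2$: weak convergence alone only supplies the inequality $\|\nabla u_{r,s}\|_2^2\le\Lambda^2$ through lower semicontinuity, so the limiting equation a priori involves the \emph{unknown} constant $\Lambda^2$ rather than $\|\nabla u_{r,s}\|_2^2$ itself. The resolution above—using the Palais--Smale property to compare two testings and force $\Lambda^2=\|\nabla u_{r,s}\|_2^2$—is the delicate point that distinguishes this argument from the local Schr\"odinger case treated in \cite{BQZ24}, and is precisely where the boundedness of $\Omega_r$ (hence the compact embedding into $L^q$ with $q<6$) is indispensable.
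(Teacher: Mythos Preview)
Your proposal is correct and follows essentially the same route as the paper: apply the monotonicity trick (Theorem \ref{mp-th}) using the mountain pass geometry of Lemma \ref{mp-betage0}, then use the compact embedding $H^1_0(\Omega_r)\hookrightarrow L^k(\Omega_r)$ for $k<6$ to upgrade the bounded Palais--Smale sequence to a strong limit, and conclude positivity via the strong maximum principle. Your handling of the Kirchhoff coefficient---passing to the limit with the auxiliary constant $\Lambda^2$, then showing $\Lambda^2=\|\nabla u_{r,s}\|_2^2$ by comparing the tested limiting equation with the tested Palais--Smale relation, and invoking ``weak convergence $+$ norm convergence $\Rightarrow$ strong convergence''---is a clean variant of the paper's direct computation of $\big(J'_{r,s}(u_n)-J'_{r,s}(u_0)\big)(u_n-u_0)\ge a\|\nabla(u_n-u_0)\|_2^2+o_n(1)$; both arguments are equivalent and rely on the same compact embedding.
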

\begin{proof}
For fixed $\alpha\in\left(0,\tilde{\alpha}_V\right)$, let us apply Theorem \ref{mp-th} to $E_{r,s}$ with $\Gamma_{r,\alpha}$ given in Lemma \ref{mp-betage0}-(iii),
\[
A(u)=\frac{a}{2}\int_{\Omega}|\nabla u|^2\mathrm{d}x+\frac{b}{4}\bigg(\int_{\Omega}|\nabla u|^2\mathrm{d}x\bigg)^2+\frac{1}{2}\int_{\Omega}V(x)u^2\mathrm{d}x
-\frac{\beta}{p}\int_{\Omega}|u|^p\mathrm{d}x
\]
and
\[
B(u)=\frac{1}{q}\int_{\Omega}|u|^{q}\mathrm{d}x.
\]
Thanks to Lemma \ref{mp-betage0}, the assumptions in Theorem \ref{mp-th} hold. Hence, for almost every $s\in\left[\frac{1}{2},1\right]$, there exists a nonnegative bounded Palais-Smale sequence $\{u_n\}$:
\[
J_{r,s}(u_n)\to m_{r,s}(\alpha)\quad\text{and}\quad J'_{r,s}(u_n)|_{T_{u_n}S_{r,\alpha}}\to0,
\]
where $T_{u_n}S_{r,\alpha}$ denoted the tangent space of $S_{r,\alpha}$ at $u_n$. Note that
\begin{equation}\label{un-ers-ge}
J'_{r,s}(u_n)+\lambda_nu_n\to0 \quad\text{in}\ H^{-1}(\Omega_r)
\end{equation}
and
\[
\lambda_n=-\frac{1}{\alpha}\bigg((a+b\int_{\Omega_r}|\nabla u_n|^2\mathrm{d}x)\int_{\Omega_r}|\nabla u_n|^2\mathrm{d}x+\int_{\Omega_r}V(x) u^2_n\mathrm{d}x-\beta\int_{\Omega_r}|u_n|^p\mathrm{d}x-s\int_{\Omega_r}|u_n|^{q}\mathrm{d}x\bigg).
\]
is bounded.
Furthermore, let us assume that $\int_{\Omega_r}|\nabla u_n|^2\mathrm{d}x\to\Lambda$, there exist $u_0\in H_0^1(\Omega_r)$ and $\lambda\in\mathbb{R}$ such that up to a subsequence,
\[
\lambda_n\to\lambda,\quad u_n\rightharpoonup u_0\ \text{in}\ H_0^1(\Omega_r)\quad \text{and}\ u_n\to u_0\ \text{in}\ L^t(\Omega_r)\ \text{for all }2\le t<6,
\]
and $u_0$ satisfies
\begin{equation}\label{eq-u0-ge}
\begin{cases}
-(a+b\Lambda^2)\Delta u_0+Vu_0+\lambda u_0=s|u_0|^{q-2}u_0+\beta|u_0|^{p-2}u_0&\text{in}\,\Omega_r,\\
u_0\in H_0^1(\Omega_r),\quad\int_{\Omega_r}|u_0|^2\mathrm{d}x=\alpha.
\end{cases}
\end{equation}
In view of \eqref{un-ers-ge}, we have
\[
J'_{r,s}(u_n)u_0+\lambda_n\int_{\Omega_r}u_nu_0\mathrm{d}x\to0\quad\text{and}\quad J'_{r,s}(u_n)u_n+\lambda_n\alpha\to0\quad\text{as}\quad n\to\infty,
\]
and
\[
J'_{r,s}(u_0)u_n+\lambda\int_{\Omega_r}u_nu_0\mathrm{d}x\to0,\quad J'_{r,s}(u_0)u_0+\lambda\alpha\to0\quad\text{as}\quad n\to\infty.
\]
Owing to
\[
\underset{n\to\infty}{\lim}\int_{\Omega_r}V(x)u_n^2\mathrm{d}x=\int_{\Omega_r}V(x)u_0^2\mathrm{d}x.
\]
We conclude that
\begin{eqnarray*}
o_n(1)&=&\big(J'_{r,s}(u_n)-J'_{r,s}(u_0)\big)(u_n-u_0)\notag\\
&=&(a+b\int_{\Omega_r}|\nabla u_n|^2\mathrm{d}x)\int_{\Omega_r}\nabla u_n\nabla(u_n-u_0)\mathrm{d}x\notag\\
&&\quad-(a+b\Lambda)\int_{\Omega_r}\nabla u_0\nabla(u_n-u_0)\mathrm{d}x
-\int_{\Omega_r}V(x)(u_n^2-u_0^2)\mathrm{d}x\notag\\
&=&(a+b\int_{\Omega_r}|\nabla u_n|^2\mathrm{d}x)(\int_{\Omega_r}\nabla u_n\nabla(u_n-u_0)\mathrm{d}x-\int_{\Omega_r}\nabla u_0\nabla(u_n-u_0)\mathrm{d}x)\notag\\
&&\quad+b(\int_{\Omega_r}|\nabla u_n|^2\mathrm{d}x-\Lambda)\int_{\Omega_r}\nabla u_0\nabla(u_n-u_0)\mathrm{d}x\notag\\
&\ge&a\int_{\Omega_r}|\nabla(u_n-u_0)|^2\mathrm{d}x.
\end{eqnarray*}
 Hence,  $u_n\to u_0$ in $H_0^1(\Omega_r)$ as $n\to\infty$. Consequently, $J_{r,s}(u_0)=m_{r,s}(\alpha)$ and $u_0$ is a nonnegative normalized solution to \eqref{main-eq-s-omega-2}.
\end{proof}

\begin{lemma}\label{nabla-u-bdd-j}
For fixed $\alpha>0$ the set of solutions $u\in S_{r,\alpha}$ of \eqref{main-eq-s-omega-2} is bounded uniformly in $s$ and $r$.
\end{lemma}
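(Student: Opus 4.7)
The plan is to combine the Pohozaev-type identity with the uniform upper bound $J_{r,s}(u)=m_{r,s}(\alpha)\le H_\alpha$ from Lemma \ref{mp-betage0}(iii), exploiting the starshapedness of $\Omega_r$, and to close the argument via Young's inequality on the strictly subcritical $L^p$ term.

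First, testing \eqref{main-eq-s-omega-2} against $u$ and against $x\cdot\nabla u$ and eliminating $\lambda$ exactly as in the proof of Theorem \ref{th-b-e-r-lambda} yields the Pohozaev-type identity
\begin{equation*}
2(a+b\|\nabla u\|_2^2)\|\nabla u\|_2^2 - \int_{\partial\Omega_r}|\nabla u|^2(x\cdot\mathbf{n})\,\mathrm{d}\sigma - \int_{\Omega_r}\tilde V u^2\,\mathrm{d}x = \frac{3s(q-2)}{q}\|u\|_q^q + \frac{3s\beta(p-2)}{p}\|u\|_p^p.
\end{equation*}
Since $\Omega_r$ is starshaped with respect to the origin, the boundary integral is nonnegative; dropping it, using that $\tilde V$ is bounded, and discarding the nonnegative $\|u\|_p^p$-term on the right (recall $\beta>0$), I obtain
\begin{equation*}
\frac{s}{q}\|u\|_q^q \le \frac{1}{3(q-2)}\bigl(2a\|\nabla u\|_2^2 + 2b\|\nabla u\|_2^4 + \|\tilde V\|_\infty\alpha\bigr).
\end{equation*}

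Next, the mountain-pass solutions from Theorem \ref{beta>0-s-omega-ge} satisfy $J_{r,s}(u)\le H_\alpha$ with $H_\alpha$ independent of $r$ and $s$. Rearranging the energy identity, using $(V_0)$ to bound $-\tfrac12\int Vu^2\le \tfrac12\|V_-\|_{3/2}S^{-1}\|\nabla u\|_2^2$, and the Gagliardo--Nirenberg estimate $\|u\|_p^p\le C_p\alpha^{(6-p)/4}\|\nabla u\|_2^{3(p-2)/2}$, one arrives at
\begin{equation*}
\frac{a}{2}\|\nabla u\|_2^2 + \frac{b}{4}\|\nabla u\|_2^4 \le H_\alpha + \tfrac12\|V_-\|_{3/2}S^{-1}\|\nabla u\|_2^2 + \frac{s}{q}\|u\|_q^q + \frac{\beta C_p\alpha^{(6-p)/4}}{p}\|\nabla u\|_2^{3(p-2)/2}.
\end{equation*}
Plugging in the estimate of the previous step, the coefficient of $\|\nabla u\|_2^4$ on the left becomes $\frac{b(3q-14)}{12(q-2)}$, which is strictly positive since $q>14/3$, while the only $\|\nabla u\|_2$-dependent contributions on the right are a constant multiple of $\|\nabla u\|_2^2$ and of $\|\nabla u\|_2^{3(p-2)/2}$ with $3(p-2)/2<4$ (as $p<14/3$). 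Young's inequality then absorbs both into, say, half of the quartic term, yielding a uniform bound $\|\nabla u\|_2\le C(\alpha)$, and hence, since $\|u\|_2^2=\alpha$, a uniform $H^1$-bound in $r$ and $s$.

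The crucial mechanism, and the main obstacle that requires care, is the algebraic interplay of the two threshold inequalities $q>14/3$ and $p<14/3$ around the $L^2$-critical exponent for the Kirchhoff quartic term: the first produces a strictly positive quartic coefficient after the Pohozaev substitution cancels the supercritical $L^q$ term, while the second ensures the $L^p$ contribution is strictly subquartic and can be absorbed. The starshapedness of $\Omega_r$ is what allows the Pohozaev boundary term to be dropped with the correct sign, and the uniformity of $H_\alpha$ in Lemma \ref{mp-betage0}(iii) is what makes the resulting bound uniform in both parameters.
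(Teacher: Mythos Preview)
Your proof is correct and follows essentially the same route as the paper: combine the Pohozaev-type identity (dropping the nonnegative boundary term via starshapedness) with the uniform bound $J_{r,s}(u)=m_{r,s}(\alpha)\le H_\alpha$ from Lemma~\ref{mp-betage0}(iii), and control the residual $\|u\|_p^p$ via Gagliardo--Nirenberg to obtain a polynomial inequality in $\|\nabla u\|_2^2$ whose leading (quartic) coefficient is positive precisely because $q>\frac{14}{3}$ and whose lower-order terms are strictly subquartic because $p<\frac{14}{3}$. The only cosmetic difference is that you first isolate $\frac{s}{q}\|u\|_q^q$ from Pohozaev and substitute into the energy bound, whereas the paper substitutes the full nonlinear combination $\frac{s}{q}\|u\|_q^q+\frac{s\beta}{p}\|u\|_p^p$ from the energy into the Pohozaev identity; these are algebraically equivalent rearrangements of the same two relations.
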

\begin{proof}
Based on the fact that $(\lambda,u)\in\mathbb{R}\times S_{r,\alpha}$ is a solution of problem \eqref{main-eq-s-omega-2}, we know
\begin{eqnarray}\label{eq-sol-bdd-ge}
&&(a+b\int_{\Omega_r}|\nabla u|^2\mathrm{d}x)\int_{\Omega_r}|\nabla u|^2\mathrm{d}x+\int_{\Omega_r}V(x)u^2\mathrm{d}x
+\lambda\int_{\Omega_r}|u|^2\mathrm{d}x\notag\\
&=&s\int_{\Omega_r}|u|^{q}\mathrm{d}x
+s\beta\int_{\Omega_r}|u|^p\mathrm{d}x
.
\end{eqnarray}
In addition, the pohozaev identity leads to
\begin{eqnarray}\label{pohozaev-ge}
&&\frac{(a+b\int_{\Omega_r}|\nabla u|^2\mathrm{d}x)}{6}\int_{\Omega_r}|\nabla u|^2\mathrm{d}x+\frac{1}{6}\int_{\partial\Omega_r}|\nabla u|^2(x\cdot\textbf{n})\mathrm{d}\sigma+\frac{1}{6}\int_{\Omega_r}\tilde{V}u^2\mathrm{d}x
\notag\\
&&\quad\quad=-\frac{1}{2}\int_{\Omega_r}Vu^2\mathrm{d}x-\frac{\lambda}{2}\int_{\Omega_r}|u|^2\mathrm{d}x
+\frac{s}{q}\int_{\Omega_r}|u|^{q}\mathrm{d}x
+\frac{s\beta}{p}\int_{\Omega_r}|u|^p\mathrm{d}x,
\end{eqnarray}
where $\textbf{n}$ denotes the outward unit normal vector on $\partial\Omega_r$.
Then via the inequality \eqref{eq-sol-bdd-ge} and \eqref{pohozaev-ge}, we deduce that
\begin{eqnarray*}
&&\frac{(a+b\int_{\Omega_r}|\nabla u|^2\mathrm{d}x)}{3}\int_{\Omega_r}|\nabla u|^2\mathrm{d}x-\frac{1}{6}\int_{\partial\Omega_r}|\nabla u|^2(x\cdot\textbf{n})\mathrm{d}\sigma-\frac{1}{6}\int_{\Omega_r}(\nabla V\cdot x)u^2\mathrm{d}x\\
&=&\frac{(q-2)s}{2q}\int_{\Omega_r}|u|^{q}\mathrm{d}x+\frac{\beta(p-2)s}{2p}\int_{\Omega_r}|u|^p\mathrm{d}x\\
&\ge&\frac{q-2}{2}\bigg(\frac{s}{q}\int_{\Omega_r}|u|^{q}\mathrm{d}x
+\frac{\beta s}{p}\int_{\Omega_r}|u|^p\mathrm{d}x\bigg)
+\frac{\beta s(p-q)}{2p}\int_{\Omega_r}|u|^p\mathrm{d}x\\
&=&\frac{q-2}{2}\bigg(\frac{1}{2}\int_{\Omega_r}|\nabla u|^2\mathrm{d}x
+\frac{1}{2}\int_{\Omega_r}V|u|^2\mathrm{d}x-m_{r,s}(\alpha)\bigg)+\frac{\beta s(p-q)}{2p}\int_{\Omega_r}|u|^p\mathrm{d}x,
\end{eqnarray*}
where we have used $\beta>0$.  Recall that  $\Omega_r$ is starshaped with respect to 0, so $x\cdot\textbf{n}\ge0$ for any $x\in\partial\Omega_r$. Thereby, applying the Gagliardo-Nirenberg inequality, we obtain
\begin{eqnarray*}
&&\frac{q-2}{2}m_{r,s}(\alpha)+\frac{\beta s(q-p)C_{p}}{2p}\alpha^{\frac{6-p}{4}}
\bigg(\int_{\Omega}|\nabla u|^2\mathrm{d}x\bigg)^{\frac{3p-6}{4}}\\
&\ge&\frac{q-2}{2}m_{r,s}(\alpha)+\frac{\beta s(q-p)}{2p}\int_{\Omega_r}|u|^p\mathrm{d}x\\
&=&\frac{q-2}{2}\bigg((a+b\int_{\Omega_r}|\nabla u|^2\mathrm{d}x)\int_{\Omega_r}|\nabla u|^2\mathrm{d}x
+\frac{1}{2}\int_{\Omega_r}V|u|^2\mathrm{d}x\bigg)\\
&&\quad-\frac{(a+b\int_{\Omega_r}|\nabla u|^2\mathrm{d}x)}{3}\int_{\Omega_r}|\nabla u|^2\mathrm{d}x-\frac{1}{6}\int_{\partial\Omega_r}|\nabla u|^2(x\cdot\textbf{n})\mathrm{d}\sigma-\frac{1}{6}\int_{\Omega_r}(\nabla V\cdot x)u^2\mathrm{d}x\\
&\ge&\frac{3q-10}{4}b\bigg(\int_{\Omega_r}|\nabla u|^2\mathrm{d}x\bigg)^2-\alpha\bigg(\frac{1}{6}\|\nabla V\cdot x\|_\infty+\frac{1}{N-2}\|V\|_\infty\bigg).
\end{eqnarray*}
As a consequence of $2<p<\frac{14}{3}$ and  Lemma \ref{mp-betage0}, we could bound $\int_{\Omega_r}|\nabla u|^2\mathrm{d}x$ uniformly in $s$ and $r$.
\end{proof}

\begin{lemma}\label{beta>0-e-strong-ge}
Assume that $0<\alpha<\tilde{\alpha}_V$ and  $r>\tilde{r}_\alpha$, where $\tilde{\alpha}_V$ and $\tilde{r}_\alpha$ are given in Theorem \ref{beta>0-e>0-Omega} and  Lemma \ref{mp-betage0}, respectively. Then the following hold:
 \begin{description}
   \item[(i)] Equation \eqref{main-eq-omega} admits a solution $(\lambda_{r,\alpha},u_{r,\alpha})$ such that $u_{r,\alpha}>0$ in $\Omega_r$.
   \item[(ii)] There exists $\bar{\alpha}\in\left(0,\tilde{\alpha}_V\right)$ such that
\[
\underset{r\to\infty}{\liminf}\,\lambda_{r,\alpha}>0\quad\text{for}\ \ 0<\alpha<\bar{\alpha}.
\]
 \end{description}
\end{lemma}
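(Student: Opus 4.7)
The strategy is to obtain $(\lambda_{r,\alpha}, u_{r,\alpha})$ by passing to the limit $s \to 1^-$ in the family supplied by Theorem~\ref{beta>0-s-omega-ge}, and then to read off the sign of $\lambda_{r,\alpha}$ from a Pohozaev identity along the lines of Theorem~\ref{th-b-e-r-lambda}.

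For (i), fix $\alpha \in (0,\tilde\alpha_V)$ and $r > \tilde r_\alpha$, and select $s_n \nearrow 1$ inside the full-measure set provided by Theorem~\ref{beta>0-s-omega-ge}. Write $(\lambda_n, u_n) := (\lambda_{r,s_n}, u_{r,s_n})$, so that $u_n > 0$ solves~\eqref{main-eq-s-omega-2} and $J_{r,s_n}(u_n) = m_{r,s_n}(\alpha)$. Lemma~\ref{nabla-u-bdd-j} ensures that $\|u_n\|_{H^1_0(\Omega_r)}$ is bounded uniformly in $n$ and $r$; testing the equation against $u_n$ and using $\|u_n\|_2^2 = \alpha$ shows that $\{\lambda_n\}$ is likewise bounded. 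Extracting a subsequence I obtain $\lambda_n \to \lambda_{r,\alpha}$, $u_n \rightharpoonup u_{r,\alpha}$ in $H^1_0(\Omega_r)$, $u_n \to u_{r,\alpha}$ in $L^t(\Omega_r)$ for every $t \in [2,6)$, and $\|\nabla u_n\|_2^2 \to \Lambda$. Repeating the monotonicity-of-Kirchhoff computation from the proof of Theorem~\ref{beta>0-s-omega-ge} applied to $\bigl(J'_{r,s_n}(u_n) - J'_{r,1}(u_{r,\alpha})\bigr)(u_n - u_{r,\alpha})$ and invoking the subcritical compactness forces $a\,\|\nabla(u_n - u_{r,\alpha})\|_2^2 \to 0$, hence strong $H^1_0$-convergence. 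Passing to the limit in~\eqref{main-eq-s-omega-2} identifies $(\lambda_{r,\alpha}, u_{r,\alpha})$ as a nonnegative solution of~\eqref{main-eq-omega} with $\|u_{r,\alpha}\|_2^2 = \alpha > 0$, and the strong maximum principle upgrades this to $u_{r,\alpha} > 0$.

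For (ii), I mimic the proof of Theorem~\ref{th-b-e-r-lambda}: combining the Nehari identity (obtained by testing against $u_{r,\alpha}$) with the Pohozaev identity (testing against $x \cdot \nabla u_{r,\alpha}$), and using star-shapedness so that $x \cdot \textbf{n} \ge 0$ on $\partial\Omega_r$, yields
\[
\lambda_{r,\alpha}\alpha \ge \tfrac{1}{2}\int_{\Omega_r}\tilde V u_{r,\alpha}^2\,\mathrm{d}x - \int_{\Omega_r} V u_{r,\alpha}^2\,\mathrm{d}x + \tfrac{6-p}{2p}\beta\int_{\Omega_r}|u_{r,\alpha}|^p\,\mathrm{d}x + \tfrac{6-q}{2q}\int_{\Omega_r}|u_{r,\alpha}|^q\,\mathrm{d}x.
\]
The last two terms are nonnegative since $2<p<q<6$ and $\beta>0$, while the boundedness of $V$ and $\tilde V$ gives $\bigl|\tfrac{1}{2}\int\tilde V u_{r,\alpha}^2 - \int V u_{r,\alpha}^2\bigr| \le C_V\alpha$ with $C_V$ independent of $r$. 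To close, I use the energy identity $J_r(u_{r,\alpha}) = m_{r,\alpha} \in [F_\alpha, H_\alpha]$ from Lemma~\ref{mp-betage0}(iii), together with the Nehari identity and the uniform bound on $\|\nabla u_{r,\alpha}\|_2$ from Lemma~\ref{nabla-u-bdd-j}, to express $\tfrac{6-p}{2p}\beta\int|u_{r,\alpha}|^p + \tfrac{6-q}{2q}\int|u_{r,\alpha}|^q$ as a positive linear combination of $m_{r,\alpha}$, $\|\nabla u_{r,\alpha}\|_2^2$ and $\|\nabla u_{r,\alpha}\|_2^4$, up to an $O(\alpha)$ remainder. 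The $r$-uniform mountain-pass floor $F_\alpha$ then provides a positive lower bound on the nonlinear contribution, and choosing $\bar\alpha \le \tilde\alpha_V$ small enough that this bound exceeds $C_V\bar\alpha$ gives $\liminf_{r\to\infty}\lambda_{r,\alpha} > 0$.

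The main obstacle is precisely this last step. Unlike Theorem~\ref{th-b-e-r-lambda}, hypothesis~\eqref{vvv} is unavailable here, so the potential integrals cannot be absorbed by a sign condition and the positivity of $\lambda_{r,\alpha}$ must be extracted from the nonlinear terms alone; the delicate point is to make the lower bound on $\tfrac{6-p}{2p}\beta\int|u|^p + \tfrac{6-q}{2q}\int|u|^q$ genuinely uniform in $r$ while only knowing that $m_{r,\alpha}$ lies in the corridor $[F_\alpha, H_\alpha]$ of Lemma~\ref{mp-betage0}. This is what forces the extra smallness restriction $\alpha < \bar\alpha$.
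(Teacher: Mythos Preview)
Your argument for (i) is correct and matches the paper's: pass to the limit $s\to 1^-$ along the full-measure set from Theorem~\ref{beta>0-s-omega-ge}, use Lemma~\ref{nabla-u-bdd-j} for uniform bounds, and repeat the strong-convergence computation.

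For (ii), however, there is a genuine gap, and the paper proceeds by an entirely different route. Your plan is to extract positivity of $\lambda_{r,\alpha}$ directly from the Pohozaev--Nehari identity on $\Omega_r$, bounding the nonlinear contribution $\tfrac{6-p}{2p}\beta\int|u|^p+\tfrac{6-q}{2q}\int|u|^q$ from below via the mountain-pass corridor $[F_\alpha,H_\alpha]$. But if you actually eliminate the nonlinear integrals using the energy identity $E_r(u)=m_{r,1}(\alpha)$ and the Nehari relation, the coefficient of $m_{r,1}(\alpha)$ that appears is \emph{negative}, not positive: one gets
\[
\tfrac{6-p}{2p}\beta\!\int|u|^p+\tfrac{6-q}{2q}\!\int|u|^q
= a\|\nabla u\|_2^2+\tfrac{b}{4}\|\nabla u\|_2^4+\!\int Vu^2-3m_{r,1}(\alpha)-\tfrac{1}{2}\lambda_{r,\alpha}\alpha.
\]
So the floor $m_{r,1}\ge F_\alpha$ pushes the right-hand side \emph{down}, and you would instead need a uniform-in-$r$ \emph{lower} bound on $\|\nabla u_{r,\alpha}\|_2^2$ strong enough to beat $3H_\alpha$. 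Nothing in Lemmas~\ref{mp-betage0}--\ref{nabla-u-bdd-j} supplies this; Lemma~\ref{nabla-u-bdd-j} gives only an upper bound. The sentence ``express \ldots\ as a positive linear combination of $m_{r,\alpha}$, $\|\nabla u\|_2^2$, $\|\nabla u\|_2^4$'' is therefore not achievable, and you yourself flag this step as the obstacle without resolving it.

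The paper's proof of (ii) avoids this difficulty altogether. It argues by contradiction: assume $\lambda_{\alpha_n}\le 0$ along some $\alpha_n\to 0$, first show (via a separate contradiction using the eigenvalue comparison and Lemma~\ref{ur-ubdd}) that $\liminf_{r\to\infty}\max_{\Omega_r}u_{r,\alpha}>0$ for small $\alpha$, then pass to weak limits $u_{r,\alpha_n}\rightharpoonup u_{\alpha_n}$ in $H^1(\mathbb{R}^3)$ and run a concentration-compactness dichotomy. The key point is that after translating by the concentration centre $z_{r,\alpha_n}$ with $|z_{r,\alpha_n}|\to\infty$, the potential term disappears (because $V\in L^{3/2}$), and the limit profile solves the autonomous equation $-(a+b\Lambda^2)\Delta v+\lambda_{\alpha_n}v=|v|^{q-2}v+\beta|v|^{p-2}v$ on $\mathbb{R}^3$. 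For \emph{that} equation the Pohozaev identity immediately gives $\lambda_{\alpha_n}>0$ (equation~\eqref{lambdaalpha-p-ge}), contradicting the assumption. In short, the paper transfers the Pohozaev argument to the limit problem where $V$ has vanished, rather than fighting the $V$ and $\tilde V$ terms on $\Omega_r$ as you attempt.
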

\begin{proof}
 (i) For fixed $0<\alpha<\tilde{\alpha}_V$ and  $r>\tilde{r}_\alpha$. Based on the preceding Theorem \ref{beta>0-s-omega-ge} and Lemma \ref{nabla-u-bdd-j}, we demonstrate that there exist solutions $\{(\lambda_{r,\alpha,s},u_{r,\alpha,s})\}$ to problem \eqref{main-eq-s-omega-2} for $s\in\left[1/2,1\right]$, and $\{u_{r,\alpha,s}\}\subset S_{r,\alpha}$ is bounded. Now, using a technique similar to the proof of  Theorem \ref{beta>0-s-omega-ge}, we obtain $u_{r,\alpha}\in S_{r,\alpha}$ and $\lambda_{r,\alpha}\in\mathbb{R}$ such that going to a subsequence, $\lambda_{r,\alpha,s}\to\lambda_{r,\alpha}$ and $u_{r,\alpha,s}\to u_{r,\alpha}$ in $H_0^{1}(\Omega_r)$ as $s\to1$. This combined with the strong maximum principle leads to that $u_{r,\alpha}>0$ is a solution of problem \eqref{main-eq-omega}.

(ii) In view of Lemma \ref{ur-ubdd} we have
\[
\underset{r\to\infty}{\limsup}\,\underset{\Omega_r}{\max}\,u_{r,\alpha}<\infty.
\]
Furthermore, let us \textbf{claim} that there exists $\alpha'_1>0$ such that
\begin{equation}\label{g-alpha>0}
g(\alpha):=\underset{r\to\infty}{\liminf}\,\underset{\Omega_r}{\max}\,u_{r,\alpha}>0
\end{equation}
for any $0<\alpha<\alpha'_1$. Assume
by  contradiction that there exists a sequence $\alpha_k\to0$  as $k\to\infty$ such that
$g(\alpha_k)=0$ for any $k$, that is,
\begin{equation*}%\label{urk}
\underset{r\to\infty}{\limsup}\,\underset{\Omega_r}{\max}\,u_{r,\alpha_k}=0\quad\text{for any}\ k.
\end{equation*}
For any fixed $k$, it follows from  $u_{r,\alpha_k}\in S_{r,\alpha_k}$ that for any $t>2$
\begin{equation}\label{urk-m}
\int_{\Omega_r}|u_{r,\alpha_k}|^t\mathrm{d}x\le\big|\underset{\Omega_r}{\max}\,
u_{r,\alpha_k}\big|^{t-2}\alpha_k\to0\quad \text{as}\ r\to\infty.
\end{equation}
Hence, there exists $\bar{r}_k>0$ such that
\[
\Big|\frac{1}{q}\int_{\Omega_r}|u_{r,\alpha_k}|^{q}\mathrm{d}x
+\frac{\beta}{p}\int_{\Omega_r}|u_{r,\alpha_k}|^{p}\mathrm{d}x\Big|
<\frac{m_{r,1}(\alpha_k)}{2}\quad\text{for any}\ r\ge\bar{r}_k.
\]
In view of $E_r(u_{r,\alpha_k})=m_{r,1}(\alpha_k)$, we further have that for any large $k$
\begin{equation}\label{nabia-urk-s}
(a+b\int_{\Omega_r}|\nabla u_{r,\alpha_k}|^2\mathrm{d}x)\int_{\Omega_r}|\nabla u_{r,\alpha_k}|^2\mathrm{d}x+\int_{\Omega_r}V u_{r,\alpha_k}^2\mathrm{d}x
\ge m_{r,1}(\alpha_k),\ r\ge\bar{r}_k.
\end{equation}
It follows from \eqref{urk-m}-\eqref{nabia-urk-s} that there exists $r_k\ge \bar{r}_k$ with $\bar{r}_k\to\infty$ as $k\to\infty$ such that
\begin{equation}\label{urk-max}
\underset{k\to\infty}{\limsup}\,\underset{\Omega_{r_k}}{\max}\,u_{r_k,\alpha_k}=0,
\end{equation}
\begin{equation}\label{urk-s-2}
\int_{\Omega_{r_k}}|u_{r_k,\alpha_k}|^t\mathrm{d}x\le\big|\underset{\Omega_{r_k}}{\max}\,
u_{r_k,\alpha_k}\big|^{t-2}\alpha_k\to0\quad \text{as}\ k\to\infty\ \text{for any }t>2.
\end{equation}
and
\begin{equation}\label{nabia-urk-s-2}
(a+b\int_{\Omega_{r_k}}|\nabla u_{r,\alpha_k}|^2\mathrm{d}x)\int_{\Omega_{r_k}}|\nabla u_{r_k,\alpha_k}|^2\mathrm{d}x+\int_{\Omega_{r_k}}V u_{r_k,\alpha_k}^2\mathrm{d}x\to\infty\quad\text{as}\ k\to\infty.
\end{equation}
By Equation \eqref{main-eq-omega} and \eqref{urk-s-2}-\eqref{nabia-urk-s-2}
\begin{equation}\label{lambda-k}
\lambda_{r_k,\alpha_k}\to-\infty\ \text{as}\ k\to\infty.
\end{equation}
Now this together with \eqref{main-eq-omega} implies that
\begin{eqnarray*}
&&-(a+b\int_{\Omega_{r_k}}|\nabla u_{r,\alpha_k}|^2\mathrm{d}x)\Delta u_{r_k,\alpha_k}+\bigg(\|V\|_\infty
+\frac{\lambda_{r_k,\alpha_k}}{2}u_{r_k,\alpha_k}\bigg)u_{r_k,\alpha_k}\\
&\ge&\bigg(-\frac{\lambda_{r_k,\alpha_k}}{2}+|u_{r_k,\alpha_k}|^{q-2}
+\beta|u_{r_k,\alpha_k}|^{p-2}\bigg)u_{r_k,\alpha_k}\ge0
\end{eqnarray*}
for large $k$. Let $\theta_{r_k}$ be the principal eigenvalue of $-\Delta$ with Dirichlet boundary condition in $\Omega_{r_k}$, and $0<v_{r_k}$ be the corresponding normalized eigenfunction. Then $\theta_{r_k}=\theta_1/r_k^2$ and
\[
\bigg((a+b\int_{\Omega_{r_k}}|\nabla u_{r,\alpha_k}|^2\mathrm{d}x)\frac{\theta_1}{r_k^2}+\|V\|_\infty+\frac{\lambda_{r_k,\alpha_k}}{2}\bigg)
\int_{\Omega_{r_k}}u_{r_k,\alpha_k}v_{r_k}\mathrm{d}x\ge0.
\]
Since $\int_{\Omega_{r_k}}u_{r_k,\alpha_k}v_{r_k}\mathrm{d}x>0$, we have
\[
(a+b\int_{\Omega_{r_k}}|\nabla u_{r,\alpha_k}|^2\mathrm{d}x)\frac{\theta_1}{r_k^2}+\|V\|_\infty+\frac{\lambda_{r_k,\alpha_k}}{2}\ge0,
\]
which contradicts \eqref{lambda-k} for large $k$. Hence the \textbf{claim} holds.  Consider $H_0^1(\Omega_r)$ as a subspace of $H^1(\mathbb{R}^{3})$ for any $r>0$. In view of  Lemma \ref{nabla-u-bdd-j}, there are  $u_\alpha\in H^1(\mathbb{R}^{3})$ and $\lambda_\alpha\in\mathbb{R}$ such that as $r\to\infty$, up to a subsequence if necessary,
\begin{gather*}
  \|\nabla u_r\|_2\to\Lambda,\quad\lambda_{r,\alpha}\to\lambda_\alpha \quad\text{and}\quad
  u_{r,\alpha}\rightharpoonup u_\alpha\quad\text{in}\ H^1(\mathbb{R}^{3}),\\
  u_{r,\alpha}\to u_\alpha\quad\text{in}\ L_{loc}^{t}(\mathbb{R}^{3})\ \text{for all}\ 2\le t<6.
\end{gather*}
Assume by contradiction that $\lambda_{\alpha_n}\le0$ for some sequence $\alpha_n\to0$. Indeed, let us select
$\theta_r$ as the principal eigenvalue of $-\Delta$ with Dirichlet boundary condition in $\Omega_r$, the corresponding normalized eigenfunction $v_r>0$ can be chosen as a test function for  \eqref{main-eq-omega}, so there holds
\begin{eqnarray*}
&&\big((a+b\int_{\Omega_{r}}|\nabla u_{r,\alpha_n}|^2\mathrm{d}x)\theta_r+\lambda_{r,\alpha_n}\big)\int_{\Omega_r}u_{r,\alpha_n}v_r\mathrm{d}x
+\int_{\Omega_r}Vu_{r,\alpha_n}v_r\mathrm{d}x\\
&=&\beta\int_{\Omega_r}u^{p-1}_{r,\alpha_n}v_r\mathrm{d}x
+\int_{\Omega_r}u^{q-1}_{r,\alpha_n}v_r\mathrm{d}x\ge0.
\end{eqnarray*}
In view of $\int_{\Omega_r}u_{r,\alpha_n}v_r\mathrm{d}x>0$ and $\theta_r=r^{-2}\theta_1$, we get
\[
\lambda_{r,\alpha_n}+\underset{x\in\mathbb{R}^{3}}{\max}\,V(x)+r^{-2}\theta_1(a+b\int_{\Omega_{r}}|\nabla u_{r,\alpha_n}|^2\mathrm{d}x)\ge0.
\]
Hence there exists $C>0$ independent of $n$ such that $|\lambda_{r,\alpha_n}|\le C$ for any $n$.
Next, it is explained in two cases.

\textbf{Case 1:} There is subsequence, still denoted by $\{\alpha_n\}$, such that $u_{\alpha_n}=0$. We first \textbf{claim} that for every $n$ there exists $d_n>0$ such that
\begin{equation}\label{u-ge-dn}
\underset{r\to\infty}{\liminf}\underset{z\in\mathbb{R}^{3}}{\sup}\int_{B(z,1)}
u^2_{r,\alpha_n}\mathrm{d}x\ge d_n.
\end{equation}
Otherwise, the concentration compactness principle implies for every $n$ that
\[
u_{r,\alpha_n}\to0\ \text{in}\ L^t(\mathbb{R}^{3})\ \ \text{as}\ r\to\infty,\quad\text{for all }2<t\le6.
\]
By the diagonal principle, equation \eqref{main-eq-omega} and $|\lambda_{r,\alpha_n}|\le C$ for large $r$, there exists $r_n\to\infty$ such that
\[
\int_{\Omega_{r_n}}|\nabla u_{r_n,\alpha_n}|^2\mathrm{d}x\to0,
\]
which is contradicts (iii) in Lemma \ref{mp-betage0} for large $n$. As a consequence \eqref{u-ge-dn} holds, and then there is $x_{r,\alpha_n}\in\Omega_r$ with $|x_{r,\alpha_n}|\to\infty$ such that
\[
\int_{B(x_{r,\alpha_n},1)}
u^2_{r,\alpha_n}\mathrm{d}x\ge\frac{d_n}{2}.
\]
Next, we \textbf{claim} that
\begin{equation}\label{dist}
\text{dist}(x_{r,\alpha_n},\partial\Omega_r)\to\infty,\quad \text{as}\ r\to\infty.
\end{equation}
 Indeed, assume by contradiction that
$\underset{r\to\infty}{\liminf}\,\text{dist}(x_{r,\alpha_n},\partial\Omega_r)=l<\infty$.
 It follows from \eqref{g-alpha>0} that $l>0$.
Let $v_r(x)=u_{r,\alpha_n}(x+x_{r,\alpha_n})$ for any $x\in\Sigma_r:=\{x\in\mathbb{R}^{3}:x+x_{r,\alpha_n}\in\Omega_r\}$. Then $v_r$ is bounded in $H^1(\mathbb{R}^{3})$, and there is $v\in H^1(\mathbb{R}^{3})$ such that $v_r\rightharpoonup v$ as $r\to\infty$. By the regularity theory of elliptic partial differential equations and $\liminf_{r\to\infty}u_{r,\alpha_n}(x_{r,\alpha_n})\ge d_{\alpha_n}> 0$, we deduce that $v(0)\ge d_{\alpha_n}>0$. Assume without loss of generality that, up to a subsequence,
\[
\underset{r\to\infty}{\lim}\frac{x_{r,\alpha_n}}{|x_{r,\alpha_n}|}=e_1.
\]
Setting
\[
\Sigma=\Big\{x\in\mathbb{R}^{3}:x\cdot e_1<l\Big\}=\Big\{x\in\mathbb{R}^{3}:x_1<l\Big\},
\]
we have $\varphi(\cdot-x_{r,\alpha_n})\in C_c^\infty(\Omega_r)$ for any $\varphi\in C_c^\infty(\Omega_r)$ and $r$ large enough. It then follows that
\begin{eqnarray}\label{eq-varphi}
&&\int_{\Omega_r}|u_{r,\alpha_n}|^{q-2}u_{r,\alpha_n}\varphi(\cdot-x_{r,\alpha_n})\mathrm{d}x
+\beta\int_{\Omega_r}|u_{r,\alpha_n}|^{p-2}u_{r,\alpha_n}\varphi(\cdot-x_{r,\alpha_n})\mathrm{d}x\notag\\
&=&(a+b\int_{\Omega_r}|\nabla u_{r,\alpha_n}|^2\mathrm{d}x)\int_{\Omega_r}\nabla u_{r,\alpha_n}\nabla\varphi(\cdot-x_{r,\alpha_n})\mathrm{d}x
+\int_{\Omega_r}Vu_{r,\alpha_n}\varphi(\cdot-x_{r,\alpha_n})\mathrm{d}x\notag\\
&&\quad+\lambda_{r,\alpha_n}\int_{\Omega_r}u_{r,\alpha_n}\varphi(\cdot-x_{r,\alpha_n})\mathrm{d}x
.
\end{eqnarray}
Since $|x_{r,\alpha_n}|\to\infty$ as $r\to\infty$, we have
\begin{eqnarray}\label{V-vanish}
\Big|\int_{\Omega_r}Vu_{r,\alpha_n}\varphi(\cdot-x_{r,\alpha_n})\mathrm{d}x\Big|
&\le&\int_{\text{Supp}\varphi}\Big|V(\cdot+x_{r,\alpha_n})v_{r}\varphi\Big|\mathrm{d}x\notag\\
&\le&\|v_{r}\|_{6}\|\varphi\|_{6}
\bigg(\int_{\text{Supp}\varphi}|V(\cdot+x_{r,\alpha_n})|^{\frac{3}{2}}\mathrm{d}x\bigg)^{\frac{2}{3}}\\
&\le&\|v_{r}\|_{6}\|\varphi\|_{6}
\bigg(\int_{\mathbb{R}^{3}\setminus B_{\frac{|x_{r,\alpha_n}|}{2}}}|V(\cdot+x_{r,\alpha_n})|
^{\frac{3}{2}}\mathrm{d}x\bigg)^{\frac{2}{3}}\to0\quad\text{as}\ r\to\infty\notag.
\end{eqnarray}
Letting $r\to\infty$ in \eqref{eq-varphi}, we obtain for $\varphi\in C_c^\infty(\Sigma)$:
\[
(a+b\Lambda^2)\int_{\Sigma}\nabla v\cdot\nabla\varphi\mathrm{d}x
+\lambda_{\alpha_n}\int_{\Sigma}v\varphi\mathrm{d}x
=\int_{\Sigma}|v|^{q-2}v\varphi\mathrm{d}x
+\beta\int_{\Sigma}|v|^{p-2}v\varphi\mathrm{d}x.
\]
Thus $v\in H^1(\mathbb{R}^{3})$ is a weak solution of the equation
\begin{equation}\label{eq-w-alpha-ge}
-(a+b\Lambda^2)\Delta v+\lambda_{\alpha_n}v=|v|^{q-2}v+\beta|v|^{p-2}v\quad\text{in}\ \Sigma.
\end{equation}
Hence we obtain a nontrivial nonnegative solution of \eqref{eq-w-alpha-ge} on a half space, which
is impossible (see e.g., \cite{EL82}). This proves that dist$(x_{r,\alpha_n},\partial\Omega_r)\to\infty$ as $r\to\infty$. A similar argument as above shows that \eqref{eq-w-alpha-ge} holds for $\Sigma=\mathbb{R}^{3}$.
Thus
\begin{equation}\label{alpha-eq-j-ge}
(a+b\Lambda^2)\int_{\mathbb{R}^{3}}|\nabla v|^2\mathrm{d}x
+\lambda_{\alpha_n}\int_{\mathbb{R}^{3}}|v|^2\mathrm{d}x
=\int_{\mathbb{R}^{3}}|v|^{q}\mathrm{d}x
+\beta\int_{\mathbb{R}^{3}}|v|^{p}\mathrm{d}x.
\end{equation}
The Pohozaev identity gives
\begin{equation}\label{alpha-eq-pohozaev-j-ge}
\frac{(a+b\Lambda^2)}{6}\int_{\mathbb{R}^{3}}|\nabla v|^2\mathrm{d}x
+\frac{\lambda_{\alpha_n}}{2}\int_{\mathbb{R}^{3}}v^2\mathrm{d}x
=\frac{1}{q}\int_{\mathbb{R}^{3}}|v|^{q}\mathrm{d}x
+\frac{\beta}{p}\int_{\mathbb{R}^{3}}|v|^p\mathrm{d}x.
\end{equation}
Next it follows from \eqref{alpha-eq-j-ge}-\eqref{alpha-eq-pohozaev-j-ge} and $2<p<q<6$ that
\begin{equation}\label{lambdaalpha-p-ge}
\frac{1}{3}\lambda_{\alpha_n}\int_{\mathbb{R}^{3}}v^2\mathrm{d}x
=\frac{\beta(6-p)}{6p}\int_{\mathbb{R}^{3}}|v|^p\mathrm{d}x
+\frac{6-q}{6q}\int_{\mathbb{R}^{3}}|v|^{q}\mathrm{d}x.
\end{equation}
As a result, we have $\lambda_{\alpha_n}>0$ which is a contradiction.

\textbf{Case 2:} $u_{\alpha_n}\neq0$ for $n$ large. Note that $u_{\alpha_n}$ satisfies
\begin{equation}\label{eq-w-alpha-j-u}
-(a+b\Lambda^2)\Delta u_{\alpha_n}+\lambda_{\alpha_n}u_{\alpha_n}=|u_{\alpha_n}|^{q-2}u_{\alpha_n}
+\beta|u_{\alpha_n}|^{p-2}u_{\alpha_n}\quad\text{in}\ \mathbb{R}^{3}.
\end{equation}
Set $w_{r,\alpha_n}:=u_{r,\alpha_n}-u_{\alpha_n}$, we first \textbf{claim} that  there are $d_n>0$ and $z_{r,\alpha_n}\in\Omega_r$ with $|z_{r,\alpha_n}|\to\infty$ as $r\to\infty$ such that
\begin{equation}\label{wdn}
\int_{B(z_{r,\alpha_n},1)}w^2_{r,\alpha_n}\mathrm{d}x>d_n.
\end{equation}
If not,
\[
\underset{r\to\infty}{\liminf}\underset{z\in\mathbb{R}^{3}}{\sup}
\int_{B(z,1)}w^2_{r,\alpha_n}\mathrm{d}x=0,
\]
then the concentration compactness principle implies $u_{r,\alpha_n}\to u_{\alpha_n}$ in $L^t(\mathbb{R}^{3})$ for any $2<t<6$. It follows from  \eqref{main-eq-omega}  and \eqref{eq-w-alpha-j-u} that
\begin{eqnarray*}
&&(a+b\int_{\Omega_r}|\nabla u_{r,\alpha_n}|^2\mathrm{d}x)\int_{\Omega_r}|\nabla u_{r,\alpha_n}|^2\mathrm{d}x+\alpha_n\lambda_{r,\alpha_n}\\
&=&\beta\int_{\Omega_r}|u_{r,\alpha_n}|^p\mathrm{d}x
+\int_{\Omega_r}|u_{r,\alpha_n}|^{q}\mathrm{d}x
-\int_{\Omega_r}Vu_{r,\alpha_n}\mathrm{d}x\\
&=&\beta\int_{\mathbb{R}^{3}}|u_{r,\alpha_n}|^p\mathrm{d}x
+\int_{\mathbb{R}^{3}}|u_{r,\alpha_n}|^{q}\mathrm{d}x
-\int_{\mathbb{R}^{3}}Vu_{r,\alpha_n}\mathrm{d}x\\
&\to&\beta\int_{\mathbb{R}^{3}}|u_{\alpha_n}|^p\mathrm{d}x
+\int_{\mathbb{R}^{3}}|u_{\alpha_n}|^{q}\mathrm{d}x
-\int_{\mathbb{R}^{3}}Vu_{\alpha_n}\mathrm{d}x\\
&=&(a+b\Lambda^2)\int_{\mathbb{R}^{3}}|\nabla u_{\alpha_n}|^2\mathrm{d}x
+\lambda_{\alpha_n}\int_{\mathbb{R}^{3}}|u_{\alpha_n}|^2\mathrm{d}x.
\end{eqnarray*}
Recall that  $\lambda_{r,\alpha_n}\to\lambda_{\alpha_n}$ as $r\to\infty$, there holds
\begin{eqnarray*}%\label{uralpha-infty}
&&(a+b\int_{\Omega_r}|\nabla u_{r,\alpha_n}|^2\mathrm{d}x)\int_{\Omega_r}|\nabla u_{r,\alpha_n}|^2\mathrm{d}x+\lambda_{r,\alpha_n}\alpha_n\\
&\to&(a+b\Lambda^2)\int_{\mathbb{R}^{3}}|\nabla u_{\alpha_n}|^2\mathrm{d}x
+\lambda_{\alpha_n}\int_{\mathbb{R}^{3}}|u_{\alpha_n}|^2\mathrm{d}x\quad\text{as}\ r\to\infty.
\end{eqnarray*}
This together with (iii) in Lemma \ref{mp-betage0} and $|\lambda_{\alpha_n}|\le C$ for large $n$, we have
\[
(a+b\Lambda^2)\int_{\mathbb{R}^{3}}|\nabla u_{\alpha_n}|^2\mathrm{d}x\to\infty \quad\text{as}\ n\to\infty.
\]
By \eqref{eq-w-alpha-j-u} and the Pohozaev identity,
\begin{eqnarray*}
0\le\frac{2-q}{2q}\lambda_{\alpha_n}\int_{\mathbb{R}^{3}}u_{\alpha_n}^2\mathrm{d}x
&=&\frac{q-6}{6q}(a+b\Lambda^2)\int_{\mathbb{R}^{3}}|\nabla u_{\alpha_n}|^2\mathrm{d}x
+\frac{1}{6}\int_{\mathbb{R}^{3}}\tilde{V}u_{\alpha_n}^2\mathrm{d}x\\
&&\quad+\frac{q-2}{2q}\int_{\mathbb{R}^{3}}Vu_{\alpha_n}^2\mathrm{d}x
-\frac{(q-p)\beta}{pq}\int_{\mathbb{R}^{3}}|u_{\alpha_n}|^p\mathrm{d}x\notag\\
&\le&\frac{q-6}{6q}(a+b\Lambda^2)\int_{\mathbb{R}^{3}}|\nabla u_{\alpha_n}|^2\mathrm{d}x
+\frac{\|\tilde{V}\|_\infty}{6}\alpha_n+\frac{q-2}{2q}\|V\|_\infty\alpha_n\\
&\to&-\infty\quad\text{as}\ n\to\infty,
\end{eqnarray*}
which is impossible.
Then \eqref{wdn} hold, that is, $\tilde{w}_{r,\alpha_n}:=w_{r,\alpha_n}(\cdot+z_{r,\alpha_n})\rightharpoonup\tilde{w}_{\alpha_n}\neq0$,
and a calculation similar to \eqref{dist} and \eqref{V-vanish}, we have $\tilde{w}_{\alpha_n}$ is a nonnegative solution of
\[
-(a+b\Lambda^2)\Delta \tilde{w}_{\alpha_n}+\lambda_{\alpha_n}\tilde{w}_{\alpha_n}
=|\tilde{w}_{\alpha_n}|^{q-2}\tilde{w}_{\alpha_n}
+\beta|\tilde{w}_{\alpha_n}|^{p-2}\tilde{w}_{\alpha_n}\quad\text{in}\ \mathbb{R}^{3}.
\]
It follows from an argument similar to  \eqref{lambdaalpha-p-ge} that $\lambda_n>0$ for large $n$, which is a contradiction. The proof is now complete.
\end{proof}

\noindent\textbf{Proof of Theorem \ref{beta>0-e>0-Omega}:} The proof is an immediate consequence of Theorem \ref{beta>0-s-omega-ge} and Lemma \ref{beta>0-e-strong-ge}.\qed

\section{Proof of Theorem \ref{betale0-e>0-Omega}}
\setcounter{equation}{0}
\setcounter{theorem}{0}	
In this section, we always assume that the assumptions of Theorem \ref{betale0-e>0-Omega} hold. In
order to obtain a bounded Palais-Smale sequence, we will use the monotonicity trick.
For $\frac{1}{2}\le s\le1$, the energy functional $E_{r,s}:S_{r,\alpha}\to\mathbb{R}$ is defined by
\[
E_{r,s}(u)=\frac{a}{2}\int_{\Omega_r}|\nabla u|^2dx+\frac{b}{4}\bigg(\int_{\Omega_r}|\nabla u|^2dx\bigg)^2+\frac{1}{2}\int_{\Omega_r}V(x)u^2dx
-\frac{s}{q}\int_{\Omega_r}|u|^{q}dx-\frac{\beta}{p}\int_{\Omega_r}|u|^pdx.
\]
Note that  if $u\in S_{r,\alpha}$ is a critical point of $E_{r,\alpha}$, then there exists $\lambda\in\mathbb{R}$ such that $(\lambda,u)$ is a solution of the equation
\begin{equation}\label{main-eq-s-omega}
\begin{cases}
-(a+b\int_{\Omega_r}|\nabla u|^2dx)\Delta u+V(x)u+\lambda u=s|u|^{q-2}u+\beta|u|^{p-2}u&\text{in}\ \Omega_r,\\
u\in H^{1}_{0}(\Omega_r),\ \int_{\Omega_r}|u|^2dx=\alpha.
\end{cases}
\end{equation}

\begin{lemma}\label{betale0-mp-g}
For any $\alpha>0$, there exists $r_\alpha>0$ and $u^0,u^1\in S_{r,\alpha}$ such that
\begin{description}
  \item[(i)] $E_{r,s}(u^1)\le0$ for any $r>r_\alpha$ and $s\in\left[\frac{1}{2},1\right]$,
  \[
  \|\nabla u^0\|_2^2<\bigg(\frac{2q}{3C_q(q-2)}
(a-\|V_-\|_{\frac{3}{2}}S^{-1})\alpha^{\frac{q-6}{4}}\bigg)^{\frac{4}{3q-10}}<\|\nabla u^1\|_2^2
  \]
  and
  \[
  E_{r,s}(u^0)<\frac{(3q-10)(a-\|V_-\|_{\frac{3}{2}}S^{-1})}{6(q-2)}
\bigg(\frac{2q(a-\|V_-\|_{\frac{3}{2}}S^{-1})\alpha^{\frac{q-6}{4}}}{3C_q(q-2)}
\bigg)^{\frac{4}{3q-10}}
  \]
  \item[(ii)] If $u\in S_{r,\alpha}$ satisfies
  \[
  \|\nabla u\|_2^2=\bigg(\frac{2q}{3C_q(q-2)}
(a-\|V_-\|_{\frac{3}{2}}S^{-1})\alpha^{\frac{q-6}{4}}\bigg)^{\frac{4}{3q-10}}
  \]
  then there holds
  \[
  E_{r,s}(u)\ge\frac{(3q-10)(a-\|V_-\|_{\frac{3}{2}}S^{-1})}{6(q-2)}
\bigg(\frac{2q(a-\|V_-\|_{\frac{3}{2}}S^{-1})\alpha^{\frac{q-6}{4}}}{3C_q(q-2)}
\bigg)^{\frac{4}{3q-10}}.
  \]
  \item[(iii)] Set
  \[
  m_{r,s}(\alpha)=\underset{\gamma\in\Gamma_{r,\alpha}}{\inf}\,
  \underset{t\in\left[0,1\right]}{\sup}\,E_{r,s}(\gamma(t)),
  \]
  with
  \[
  \Gamma_{r,\alpha}=\Big\{\gamma\in C(\left[0,1\right],S_{r,\alpha}):\gamma(0)=u^0,\gamma(1)=u^1\Big\}.
  \]
  Then
  \[
  \frac{(3q-10)(a-\|V_-\|_{\frac{3}{2}}S^{-1})}{6(q-2)}
\bigg(\frac{2q(a-\|V_-\|_{\frac{3}{2}}S^{-1})\alpha^{\frac{q-6}{4}}}{3C_q(q-2)}
\bigg)^{\frac{4}{3q-10}}\le m_{r,s}(\alpha)<H_\alpha.
  \]
  Here $H_\alpha:=\underset{t\in\mathbb{R}^+}{\max}\,h(t)$, where
  \[
  h(t)=\frac{1}{2}\bigg(a+\|V\|_{\frac{3}{2}}S^{-1}\bigg)t^2\theta\alpha+\frac{b}{4}t^4\theta^2\alpha^2
-\frac{\beta C_p}{p}\alpha^{\frac{p}{2}}\theta^{\frac{3(p-2)}{4}}t^{\frac{3(p-2)}{2}}-\frac{1}{2q}t^{\frac{3(q-2)}{2}}\alpha^{\frac{q}{2}}|\Omega|^{\frac{2-q}{2}}.
  \]
\end{description}
\end{lemma}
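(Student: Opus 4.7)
The plan is to build the mountain pass geometry for $E_{r,s}|_{S_{r,\alpha}}$ by a scaling construction from the positive Dirichlet principal eigenfunction on $\Omega$, exploiting that for $\beta\le 0$ both the nonlocal quartic $\frac{b}{4}\|\nabla u\|_2^4$ and $-\frac{\beta}{p}\|u\|_p^p$ have a definite nonnegative sign.

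For part (ii) this sign-definiteness lets me discard both terms in the lower estimate. Combining $\int_{\Omega_r}V u^2\,\mathrm{d}x\ge -\|V_-\|_{3/2}S^{-1}\|\nabla u\|_2^2$ (via H\"older and Sobolev) with the Gagliardo--Nirenberg bound on the $L^q$-term, and using $s\le 1$, yields
\[
E_{r,s}(u)\ge g\bigl(\|\nabla u\|_2\bigr),\qquad g(t):=\tfrac{1}{2}(a-\|V_-\|_{3/2}S^{-1})t^2-\tfrac{C_q}{q}\alpha^{(6-q)/4}t^{3(q-2)/2},
\]
uniformly in $s\in[1/2,1]$ and in $r$. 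Since $q>14/3$ forces $3(q-2)/2>2$, a direct calculation locates a unique positive maximizer $t_\alpha$ of $g$ whose squared value agrees exactly with the threshold printed in (ii) and whose level matches the stated lower bound, proving (ii).

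For part (i), let $v_1\in S_{1,\alpha}$ be the positive $L^2$-normalized principal Dirichlet eigenfunction on $\Omega$, so that $\|\nabla v_1\|_2^2=\theta\alpha$, and set $v_t(x):=t^{3/2}v_1(tx)$. Then $v_t\in S_{1/t,\alpha}$ and, extended by zero, $v_t\in S_{r,\alpha}$ for all $r\ge 1/t$. A direct change of variables, together with $-\beta\ge 0$ (to apply Gagliardo--Nirenberg on $\|v_1\|_p^p\le C_p\alpha^{p/2}\theta^{3(p-2)/4}$ while preserving the sense of the inequality), $s\ge 1/2$ (so that $-s/q\le-1/(2q)$), and H\"older $\|v_1\|_q^q\ge\alpha^{q/2}|\Omega|^{(2-q)/2}$, produces the envelope $E_{r,s}(v_t)\le h(t)$. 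Because $q>14/3>p$ the dominant power $t^{3(q-2)/2}$ in $h$ carries a negative coefficient, so $h(t)\to-\infty$ as $t\to\infty$ while $h(t)\to 0$ as $t\to 0^+$. Accordingly I can pick $t^0$ so small that simultaneously $(t^0)^2\theta\alpha<t_\alpha^2$ and $h(t^0)$ lies strictly below the critical level from (ii), and $t_0>t^0$ so large that $t_0^2\theta\alpha>t_\alpha^2$ and $h(t_0)\le 0$. Setting $u^0:=v_{t^0}$, $u^1:=v_{t_0}$ and $r_\alpha:=1/t^0$ completes (i).

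For (iii), the lower bound is forced by the intermediate value theorem: every $\gamma\in\Gamma_{r,\alpha}$ yields a continuous map $\tau\mapsto\|\nabla\gamma(\tau)\|_2^2$ joining $(t^0)^2\theta\alpha<t_\alpha^2$ at $\tau=0$ with $t_0^2\theta\alpha>t_\alpha^2$ at $\tau=1$, so it hits $t_\alpha^2$ for some $\tau$, at which point (ii) gives $E_{r,s}(\gamma(\tau))\ge g(t_\alpha)$. For the upper bound I use the explicit admissible path $\gamma_0(\tau):=v_{(1-\tau)t^0+\tau t_0}$ (continuous into $H_0^1(\Omega_r)$) and invoke the envelope to obtain $\max_\tau E_{r,s}(\gamma_0(\tau))\le\max_{t\in[t^0,t_0]}h(t)\le H_\alpha$; the strict inequality $m_{r,s}(\alpha)<H_\alpha$ comes from the fact that the H\"older and Gagliardo--Nirenberg estimates used in constructing the envelope are never saturated by the fixed nonconstant eigenfunction $v_1$. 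The main technical point throughout is keeping all estimates genuinely uniform in both $s\in[1/2,1]$ and in $r$, which is handled by the coarse monotonicity $-s/q\le-1/(2q)$ and by the observation that extending $v_t$ by zero decouples the scaling parameter $t$ from the domain size $r$.
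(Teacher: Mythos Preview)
Your proposal is correct and follows essentially the same route as the paper's proof: the same scaled eigenfunction family $v_t(x)=t^{3/2}v_1(tx)$ produces the upper envelope $E_{r,s}(v_t)\le h(t)$, the same Gagliardo--Nirenberg/Sobolev lower bound (your $g$ is the paper's $f$ composed with a square) gives (ii), and the intermediate value argument together with the explicit scaling path yields (iii). Your justification of the \emph{strict} upper bound $m_{r,s}(\alpha)<H_\alpha$ via non-saturation of H\"older on the nonconstant $v_1$ is actually more explicit than what the paper writes, which simply exhibits the path $\gamma_0$ without spelling out why strictness holds.
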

\begin{proof}
(i): Clearly the set $S_{r,\alpha}$ is path connected.  Let  $v_t(x)=t^{\frac{3}{2}}v_1(tx)$, where $v_1\in S_{1,\alpha}$ is the positive normalized eigenfunction of $-\Delta$ with Dirichlet boundary condition in $\Omega$ associated to $\theta$.  Owing to
\[
\int_{\Omega}|\nabla v_1|^2\mathrm{d}x=\theta\alpha \quad\text{and}\quad\int_{\Omega}|v_1|^q\mathrm{d}x\ge\alpha^{\frac{q}{2}}|\Omega|^{\frac{2-q}{2}},
\]
for $x\in\Omega_{\frac{1}{t}}$ and $t>0$ there holds
\begin{eqnarray}\label{e-vt-s-h}
E_{\frac{1}{t},s}(v_t)&\le&\frac{1}{2}t^2\bigg(a+\|V\|_{\frac{3}{2}}S^{-1}\bigg)\int_{\Omega}|\nabla v_1|^2\mathrm{d}x
+\frac{b}{4}t^4\bigg(\int_{\Omega}|\nabla v_1|^2\mathrm{d}x\bigg)^2\notag\\
&&\quad-\frac{\beta}{p}t^{\frac{3(p-2)}{2}}\int_{\Omega}|v_1|^p\mathrm{d}x
-\frac{1}{2q}t^{\frac{3(q-2)}{2}}\int_{\Omega}|v_1|^{q}\mathrm{d}x\\
&\le& h(t)\notag.
\end{eqnarray}
Note that since $\frac{14}{3}<q<6$ and $\beta<0$, there exist $0<T_\alpha<t_\alpha$ such that $h(t_\alpha)=0$, $h(t)<0$ for any $t>t_\alpha$, $h(t)>0$ for any $0<t<t_\alpha$ and $h(T_\alpha)=\underset{t\in\mathbb{R}^+}{\max}\,h(t)$. As a consequence, there holds
\begin{equation}\label{htalpha0}
E_{r,s}(v_{t_\alpha})=E_{\frac{1}{t_{\alpha}},s}(v_{t_\alpha})\le h(t_\alpha)=0
\end{equation}
for any $r\ge\frac{1}{t_\alpha}$ and $s\in\left[\frac{1}{2},1\right]$. Moreover, there exists $0<t_1<T_\alpha$ such that for any $t\in\left[0,t_1\right)$,
\begin{equation}\label{htupbdd}
h(t)<h(t_1)\le\frac{(3q-10)(a-\|V_-\|_{\frac{3}{2}}S^{-1})}{6(q-2)}
\bigg(\frac{2q(a-\|V_-\|_{\frac{3}{2}}S^{-1})\alpha^{\frac{q-6}{4}}}{3C_q(q-2)}
\bigg)^{\frac{4}{3q-10}}.
\end{equation}

On the other hand, it follows from the Gagliardo-Nirenberg inequality and the H\"{o}lder inequality that
\begin{equation}\label{ers-bel}
E_{r,s}(u)\ge\frac{a}{2}\bigg(1-\|V_-\|_{\frac{3}{2}}S^{-1}\bigg)\int_{\Omega_r}|\nabla u|^2dx-\frac{C_q\alpha^{\frac{6-q}{4}}}{q}\bigg(\int_{\Omega_r}|\nabla u|^2dx\bigg)^{\frac{3(q-2)}{4}}.
\end{equation}
Let
\[
f(t):=\frac{1}{2}\bigg(a-\|V_-\|_{\frac{3}{2}}S^{-1}\bigg)t
-\frac{C_q\alpha^{\frac{6-q}{4}}}{q}t^{\frac{3(q-2)}{4}}
\]
and
\[
\tilde{t}:=\bigg(\frac{2q}{3C_q(q-2)}
(a-\|V_-\|_{\frac{3}{2}}S^{-1})\alpha^{\frac{q-6}{4}}\bigg)^{\frac{4}{3q-10}}.
\]
Then $f$ is increasing in $(0,\tilde{t})$ and decreasing in $(\tilde{t},\infty)$, and
\[
f(\tilde{t})=\frac{(3q-10)(a-\|V_-\|_{\frac{3}{2}}S^{-1})}{6(q-2)}
\bigg(\frac{2q(a-\|V_-\|_{\frac{3}{2}}S^{-1})\alpha^{\frac{q-6}{4}}}{3C_q(q-2)}
\bigg)^{\frac{4}{3q-10}}.
\]
For $r>\tilde{r}_\alpha:=
\max\Big\{\frac{1}{t_1},\sqrt{\frac{2\theta\alpha}{\tilde{t}}}\Big\}$, we have $v_{\frac{1}{\tilde{r}_\alpha}}\in S_{\tilde{r}_\alpha,\alpha}\subset S_{r,\alpha}$ and
\begin{equation}\label{nabla-u0}
\|\nabla v_{\frac{1}{\tilde{r}_\alpha}}\|_2^2=\bigg(\frac{1}{\tilde{r}_\alpha}\bigg)^2\|\nabla v_1\|_2^2<\bigg(\frac{2q}{3C_q(q-2)}
(a-\|V_-\|_{\frac{3}{2}}S^{-1})\alpha^{\frac{q-6}{4}}\bigg)^{\frac{4}{3q-10}}.
\end{equation}
Moreover, there holds
\begin{equation}\label{e-ralpha-s-ht1}
E_{\tilde{r}_\alpha,s}(v_{\frac{1}{\tilde{r}_\alpha}})\le h\bigg(\frac{1}{\tilde{r}_\alpha}\bigg)<h(t_1).
\end{equation}
Setting $u^0=v_{\frac{1}{\tilde{r}_\alpha}}$, $u^1=v_{t_\alpha}$ and
\[
r_\alpha=\max\Big\{\frac{1}{t_\alpha},\tilde{r}_\alpha\Big\},
\]
the statement (i) holds due to  \eqref{htalpha0}-\eqref{htupbdd} and \eqref{nabla-u0}-\eqref{e-ralpha-s-ht1}. Note that statement (ii) holds by \eqref{ers-bel} and a direct calculation.
 (iii): Since $E_{r,s}(u^1)\le0$ for any $\gamma\in\Gamma_{r,\alpha}$, we have
\[
\|\nabla \gamma(0)\|_2^2<\tilde{t}<\|\nabla \gamma(1)\|_2^2.
\]
It then follows from \eqref{ers-bel} that
\[
\underset{t\in\left[0,1\right]}{\max}E_{r,s}(\gamma(t))\ge f(\tilde{t})=\frac{(3q-10)(a-\|V_-\|_{\frac{3}{2}}S^{-1})}{6(q-2)}
\bigg(\frac{2q(a-\|V_-\|_{\frac{3}{2}}S^{-1})\alpha^{\frac{q-6}{4}}}{3C_q(q-2)}
\bigg)^{\frac{4}{3q-10}}
\]
for any $\gamma\in\Gamma_{r,\alpha}$, hence the first inequality  holds. For the second inequality,  we define a path $\gamma_0\in\Gamma_{r,\alpha}$ by
\[
\gamma_0(\tau):\Omega_r\to\mathbb{R},\quad x\mapsto\bigg(\tau t_\alpha+(1-\tau)\frac{1}{\tilde{r}_\alpha}\bigg)^{\frac{3}{2}}v_1\bigg(\bigg(\tau t_\alpha+(1-\tau)\frac{1}{\tilde{r}_\alpha}\bigg)x\bigg)
\]
for $\tau\in\left[0,1\right]$.
\end{proof}

In view of Lemma \ref{betale0-mp-g}, the energy functional $E_{r,s}$ possesses the mountain pass geometry.  For the sequence obtained from  Theorem \ref{mp-th}, we will show its convergence in the next theorem.
\begin{theorem}\label{beta>0-s-omega}
For $r>r_\alpha$, where $r_\alpha$ is defined in Lemma \ref{betale0-mp-g}. Problem \eqref{main-eq-s-omega} admits a solution $(\lambda_{r,s},u_{r,s})$ for almost every $s\in\left[\frac{1}{2},1\right]$. Moreover, $u_{r,s}\ge0$ and $E_{r,s}(u_{r,s})=m_{r,s}(\alpha)$.
\end{theorem}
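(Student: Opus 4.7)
The plan is to apply the Monotonicity trick (Theorem \ref{mp-th}) to the family $\{E_{r,s}\}_{s\in[1/2,1]}$ on the constraint manifold $S_{r,\alpha}$, and then upgrade the resulting bounded Palais--Smale sequence to a strong critical point. Since $\beta\le 0$, the natural splitting is $E_{r,s}(u)=A(u)-sB(u)$ with
\[
A(u)=\frac{a}{2}\|\nabla u\|_2^2+\frac{b}{4}\|\nabla u\|_2^4+\frac{1}{2}\int_{\Omega_r}Vu^2\,dx-\frac{\beta}{p}\int_{\Omega_r}|u|^p\,dx,\qquad B(u)=\frac{1}{q}\int_{\Omega_r}|u|^q\,dx\ge 0.
\]
The term $-\frac{\beta}{p}\int|u|^p$ is non-negative because $\beta\le 0$, so $A$ is coercive on $S_{r,\alpha}$ thanks to the $\frac{b}{4}\|\nabla u\|_2^4$ summand and $(V_0)$. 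The H\"older continuity of $E'_{r,s}$ and $E''_{r,s}$ on bounded sets in $H_0^1(\Omega_r)$ is standard for this mixed-power Kirchhoff functional. Combined with Lemma \ref{betale0-mp-g}, which supplies endpoints $u^0,u^1\in S_{r,\alpha}$ independent of $s$ and the separation $F_\alpha\le m_{r,s}(\alpha)\le H_\alpha$, Theorem \ref{mp-th} yields, for a.e.~$s\in[\tfrac12,1]$, a bounded sequence $\{u_n\}\subset S_{r,\alpha}$ with $E_{r,s}(u_n)\to m_{r,s}(\alpha)$ and $E'_{r,s}|_{T_{u_n}S_{r,\alpha}}(u_n)\to 0$.

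Next, following the Lagrange-multiplier step used in the proof of Theorem \ref{beta>0-s-omega-ge}, I would invoke Willem \cite[Proposition 5.12]{Wil96} to obtain $\lambda_n\in\mathbb{R}$ with $E'_{r,s}(u_n)+\lambda_n u_n\to 0$ in $H^{-1}(\Omega_r)$. Testing this relation against $u_n$ and using the boundedness of $\{u_n\}$ in $H_0^1(\Omega_r)$, together with the $L^q,L^p$ bounds from Sobolev embedding, gives that $\{\lambda_n\}$ is bounded. Up to a subsequence, $\lambda_n\to\lambda$, $\|\nabla u_n\|_2^2\to\Lambda^2$, $u_n\rightharpoonup u_{r,s}$ in $H_0^1(\Omega_r)$, and $u_n\to u_{r,s}$ in $L^t(\Omega_r)$ for all $2\le t<6$ by the compact embedding on the bounded domain $\Omega_r$. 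The weak limit $u_{r,s}$ then satisfies
\[
-(a+b\Lambda^2)\Delta u_{r,s}+V u_{r,s}+\lambda u_{r,s}=s|u_{r,s}|^{q-2}u_{r,s}+\beta|u_{r,s}|^{p-2}u_{r,s}\quad\text{in }\Omega_r.
\]

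The main obstacle, as expected for Kirchhoff problems, is the passage from weak to strong convergence because of the nonlocal coefficient $a+b\int|\nabla u_n|^2$. I would address it exactly as in the proof of Theorem \ref{beta>0-s-omega-ge}: compute $(E'_{r,s}(u_n)-E'_{r,s}(u_{r,s}))(u_n-u_{r,s})$ and, after recognising that all terms involving $V$, $|u|^{p-2}u$ and $|u|^{q-2}u$ are $o(1)$ by strong $L^t$ convergence, split the nonlocal contribution as
\[
\bigl(a+b\|\nabla u_n\|_2^2\bigr)\!\!\int\!\!\nabla u_n\!\cdot\!\nabla(u_n-u_{r,s})-\bigl(a+b\Lambda^2\bigr)\!\!\int\!\!\nabla u_{r,s}\!\cdot\!\nabla(u_n-u_{r,s}),
\]
then add and subtract $(a+b\|\nabla u_n\|_2^2)\int\nabla u_{r,s}\cdot\nabla(u_n-u_{r,s})$. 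The first resulting piece is $\ge a\|\nabla(u_n-u_{r,s})\|_2^2$, while the second is $b(\|\nabla u_n\|_2^2-\Lambda^2)\int\nabla u_{r,s}\cdot\nabla(u_n-u_{r,s})=o(1)$. Therefore $u_n\to u_{r,s}$ strongly in $H_0^1(\Omega_r)$, which gives $u_{r,s}\in S_{r,\alpha}$ and $E_{r,s}(u_{r,s})=m_{r,s}(\alpha)$.

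Finally, to secure $u_{r,s}\ge 0$ I would note that both $A$ and $B$ are even, so replacing $u_n$ by $|u_n|$ at the outset of the monotonicity argument (which preserves the constraint and the value of $E_{r,s}$) produces a non-negative Palais--Smale sequence; passing to the limit yields $u_{r,s}\ge 0$ in $\Omega_r$. The positivity $u_{r,s}>0$ claimed in the subsequent passage to $s=1$ will then come from the strong maximum principle once regularity is lifted. The only subtlety worth monitoring here is checking that Theorem \ref{mp-th} applies despite the quartic Kirchhoff term: the coercivity of $A$ and the $C^2$ H\"older bounds on $E'_{r,s},E''_{r,s}$ on bounded balls, verified by direct differentiation, cover that requirement.
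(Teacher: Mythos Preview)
Your proposal is correct and follows exactly the approach the paper takes: the paper's own proof consists of a single sentence referring back to Theorem \ref{beta>0-s-omega-ge} via Lemma \ref{betale0-mp-g} and Theorem \ref{mp-th}, and your write-up simply spells out those details with the natural splitting $A(u)$, $B(u)$ (which is now admissible because $\beta\le 0$ makes $-\tfrac{\beta}{p}\int|u|^p\ge 0$ and hence $A$ coercive). Your handling of the Kirchhoff nonlocal term in the strong-convergence step is verbatim the computation in the proof of Theorem \ref{beta>0-s-omega-ge}, and the nonnegativity via evenness of the functional and nonnegativity of the endpoints $u^0,u^1$ is the standard mechanism implicitly used there as well.
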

\begin{proof}
Using Lemma \ref{betale0-mp-g} and  Theorem \ref{mp-th}, the proof is similar to that of Theorem \ref{beta>0-s-omega-ge}, so we omit here.
\end{proof}

Next, we will further establish a uniform estimate in order to obtain a solution of problem \eqref{main-eq-s-omega}.
\begin{lemma}\label{nabla-u-bdd}
Suppose that $(\lambda,u)\in\mathbb{R}\times S_{r,\alpha}$ is a solution of problem \eqref{main-eq-s-omega} established in Theorem \ref{beta>0-s-omega} for some $r$ and $s$, then
\[
\int_{\Omega_r}|\nabla u|^2\mathrm{d}x\le \frac{12}{a(3q-10)}\bigg(\frac{q-2}{2}H_\alpha+\alpha\bigg(\frac{1}{6}\|\nabla V\cdot x\|_\infty+\frac{q-2}{4}\|V\|_\infty\bigg)\bigg).
\]
\end{lemma}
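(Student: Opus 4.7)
The plan is to mimic the Pohozaev-plus-energy argument of Lemma \ref{nabla-u-bdd-j}, adapted to the present sign of $\beta$. Writing $K^2:=\int_{\Omega_r}|\nabla u|^2\,dx$, I would first multiply the equation in \eqref{main-eq-s-omega} by $u$ and integrate, and separately test it against $x\cdot\nabla u$ to obtain the Pohozaev identity (using that $u=0$ on $\partial\Omega_r$, so $\nabla u$ is parallel to the outward normal $\mathbf{n}$). Eliminating $\lambda\alpha$ between the two identities produces, after rearrangement,
\[
\frac{a+bK^2}{3}K^2-\frac{1}{6}\int_{\partial\Omega_r}|\nabla u|^2(x\cdot\mathbf{n})\,d\sigma-\frac{1}{6}\int_{\Omega_r}\tilde{V}u^2\,dx=\frac{s(q-2)}{2q}\int_{\Omega_r}|u|^{q}\,dx+\frac{\beta(p-2)}{2p}\int_{\Omega_r}|u|^{p}\,dx.
\]
Since $\Omega_r$ is star-shaped, $x\cdot\mathbf{n}\ge 0$ on $\partial\Omega_r$, so the boundary term may be dropped to yield an upper bound for the left-hand side, i.e.\ a lower bound for the right-hand side is not lost.

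Next I would rewrite the right-hand side as
\[
\frac{q-2}{2}\Big(\frac{s}{q}\int_{\Omega_r}|u|^{q}+\frac{\beta}{p}\int_{\Omega_r}|u|^{p}\Big)+\frac{\beta(p-q)}{2p}\int_{\Omega_r}|u|^{p}\,dx.
\]
Because $\beta\le 0$ and $p<q$, the second piece is nonnegative and can be discarded, giving
\[
\frac{q-2}{2}\Big(\frac{s}{q}\int_{\Omega_r}|u|^{q}\,dx+\frac{\beta}{p}\int_{\Omega_r}|u|^{p}\,dx\Big)\le \frac{a+bK^2}{3}K^2-\frac{1}{6}\int_{\Omega_r}\tilde{V}u^2\,dx.
\]
This is precisely where the hypothesis $\beta\le 0$ of Theorem \ref{betale0-e>0-Omega} is used (contrast with the $\beta>0$ case, where the sign of this remainder is unfavorable and has to be absorbed using Gagliardo--Nirenberg as in Lemma \ref{nabla-u-bdd-j}).

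The identity $E_{r,s}(u)=m_{r,s}(\alpha)$ from Theorem \ref{beta>0-s-omega} lets me replace the parenthesis on the left by $\tfrac{a}{2}K^2+\tfrac{b}{4}K^4+\tfrac{1}{2}\int Vu^2-m_{r,s}(\alpha)$. After cancelling the $\tfrac{q-2}{4}\int Vu^2$ with the analogous piece on the right and collecting $K^2$ and $K^4$ coefficients, one obtains
\[
\frac{a(3q-10)}{12}K^2+\frac{b(3q-14)}{24}K^4\le \frac{q-2}{2}m_{r,s}(\alpha)-\frac{q-2}{4}\int_{\Omega_r}Vu^2\,dx-\frac{1}{6}\int_{\Omega_r}\tilde{V}u^2\,dx.
\]
Since $q>14/3$ both coefficients on the left are positive; discarding the $K^4$ term, using $m_{r,s}(\alpha)\le H_\alpha$ from Lemma \ref{betale0-mp-g}(iii), bounding the two potential integrals by $\|V\|_\infty\alpha$ and $\|\nabla V\cdot x\|_\infty\alpha=\|\tilde V\|_\infty\alpha$ (this needs $\tilde V$ bounded, which is part of the hypotheses), and solving for $K^2$ yields exactly the claimed inequality.

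The whole argument is essentially algebraic once Pohozaev is in hand; the only delicate point I expect is keeping track of signs, and in particular recognising that the combination $\tfrac{\beta(p-q)}{2p}\int|u|^p$ has the right sign precisely because $\beta\le 0$ and $p<q<6$, which is what makes the $\beta\le 0$ case simpler than the $\beta>0$ case and removes the need for a Gagliardo--Nirenberg absorption step.
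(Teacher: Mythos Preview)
Your argument is correct and follows essentially the same route as the paper: combine the Nehari identity and the Pohozaev identity to eliminate $\lambda\alpha$, use the star-shapedness of $\Omega_r$ to drop the boundary term, exploit $\beta\le0$ and $p<q$ to discard the remainder $\tfrac{\beta(p-q)}{2p}\int|u|^p\ge0$, substitute the energy level $m_{r,s}(\alpha)$, and bound the potential terms by $L^\infty$ norms times $\alpha$. The only cosmetic difference is that the paper keeps the boundary term until the final rearrangement before dropping it, whereas you drop it one step earlier; the resulting inequality and constants are identical.
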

\begin{proof}
 By an argument similar to that in Lemma \ref{nabla-u-bdd-j}, we deduce that
\begin{eqnarray*}
&&\frac{(a+b\int_{\Omega_r}|\nabla u|^2\mathrm{d}x)}{3}\int_{\Omega_r}|\nabla u|^2\mathrm{d}x-\frac{1}{6}\int_{\partial\Omega_r}|\nabla u|^2(x\cdot\textbf{n})\mathrm{d}\sigma-\frac{1}{6}\int_{\Omega_r}(\nabla V\cdot x)u^2\mathrm{d}x\\
&=&\frac{(q-2)s}{2q}\int_{\Omega_r}|u|^{q}\mathrm{d}x+\frac{\beta(p-2)}{2p}\int_{\Omega_r}|u|^p\mathrm{d}x\\
&\ge&\frac{q-2}{2}\bigg(\frac{s}{q}\int_{\Omega_r}|u|^{q}\mathrm{d}x
+\frac{\beta}{p}\int_{\Omega_r}|u|^p\mathrm{d}x\bigg)\\
&=&\frac{q-2}{2}\bigg(\frac{(a+b\int_{\Omega_r}|\nabla u|^2\mathrm{d}x)}{2}\int_{\Omega_r}|\nabla u|^2\mathrm{d}x
+\frac{1}{2}\int_{\Omega_r}V|u|^2\mathrm{d}x-m_{r,s(\alpha)}\bigg),
\end{eqnarray*}
where $\beta\le0$ have used. Recall that  $\Omega_r$ is starshaped with respect to 0, so $x\cdot\textbf{n}\ge0$ for any $x\in\partial\Omega_r$, thereby,
\begin{eqnarray*}
\frac{q-2}{2}m_{r,s}(\alpha)&\ge&\frac{q-2}{2}\bigg(\frac{(a+b\int_{\Omega_r}|\nabla u|^2\mathrm{d}x)}{2}\int_{\Omega_r}|\nabla u|^2\mathrm{d}x
+\frac{1}{2}\int_{\Omega_r}V|u|^2\mathrm{d}x\bigg)\\
&&\quad-\bigg(\frac{(a+b\int_{\Omega_r}|\nabla u|^2\mathrm{d}x)}{3}\int_{\Omega_r}|\nabla u|^2\mathrm{d}x
-\frac{1}{6}\int_{\partial\Omega_r}|\nabla u|^2(x\cdot\textbf{n})\mathrm{d}\sigma\\
&&\quad-\frac{1}{6}\int_{\Omega_r}(\nabla V\cdot x)u^2\mathrm{d}x\bigg)\\
&\ge&\frac{3q-10}{12}a\int_{\Omega_r}|\nabla u|^2\mathrm{d}x-\alpha\bigg(\frac{1}{6}\|\nabla V\cdot x\|_\infty+\frac{q-2}{4}\|V\|_\infty\bigg).
\end{eqnarray*}
According to  $m_{r,s}(\alpha)<H_\alpha$, the proof of Lemma \ref{nabla-u-bdd} is now complete.
\end{proof}

\begin{lemma}\label{lambda-r>0-betale0}

$(i)$For every $\alpha>0$,  problem \eqref{main-eq-omega} has a solution $(\lambda_r,u_r)$ provided $r>r_\alpha$ where $r_\alpha$ is as in Lemma \ref{betale0-mp-g}. Moreover, $u_r>0$ in $\Omega_r$.

 $(ii)$ Let $(\lambda_{r,\alpha},u_{r,\alpha})$ is the solution of problem \eqref{main-eq-omega} from Theorem \ref{beta>0-s-omega}. Then there exists $\tilde{\alpha}>0$ such that
\[
\underset{r\to\infty}{\liminf}\,\lambda_{r,\alpha}>0\quad\text{for}\quad0<\alpha<\tilde{\alpha}.
\]
\end{lemma}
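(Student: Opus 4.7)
The plan is to follow the structure of Lemma \ref{beta>0-e-strong-ge} almost verbatim, adapting each step to the sign $\beta\le 0$ via the additional hypothesis $\|\tilde V_+\|_{\frac{3}{2}}<2aS$. For Part (i), I would start from the family $(\lambda_{r,\alpha,s},u_{r,\alpha,s})$ of solutions to the relaxed problem \eqref{main-eq-s-omega} produced by Theorem \ref{beta>0-s-omega} for a.e.\ $s\in[1/2,1]$, equipped with the uniform-in-$s$ $H^1$-bound from Lemma \ref{nabla-u-bdd}. Picking any $s_n\to 1^-$, I would pass to a subsequence with $u_{r,\alpha,s_n}\rightharpoonup u_{r,\alpha}$ in $H_0^1(\Omega_r)$ and $\lambda_{r,\alpha,s_n}\to\lambda_{r,\alpha}$, and upgrade this to strong $H^1$-convergence by the Kirchhoff-type manipulation used at the end of the proof of Theorem \ref{beta>0-s-omega-ge}, which relies only on $a>0$ and on the compactness of the sub-critical Sobolev embedding on the bounded domain $\Omega_r$. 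The strong maximum principle then yields $u_{r,\alpha}>0$.

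For Part (ii), Lemma \ref{ur-ubdd} together with Remark \ref{ur-ubdd-beta} first give $\limsup_{r\to\infty}\|u_{r,\alpha}\|_\infty<\infty$, while the eigenfunction test-function argument in the proof of Lemma \ref{beta>0-e-strong-ge}(ii) (which does not use $\beta>0$) supplies both $\liminf_{r\to\infty}\|u_{r,\alpha}\|_\infty>0$ for $\alpha$ small and the a priori bound $|\lambda_{r,\alpha_n}|\le C$. I then argue by contradiction: assume $\lambda_{\alpha_n}:=\lim_{r\to\infty}\lambda_{r,\alpha_n}\le 0$ along some $\alpha_n\to 0^+$, and let $u_{\alpha_n}$ denote the weak $H^1(\mathbb{R}^3)$-limit of $u_{r,\alpha_n}$. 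The dichotomy $u_{\alpha_n}\equiv 0$ vs.\ $u_{\alpha_n}\not\equiv 0$, together with the concentration / no-concentration sub-dichotomy applied to the defect $u_{r,\alpha_n}-u_{\alpha_n}$, proceeds as in Lemma \ref{beta>0-e-strong-ge}(ii): concentration compactness, the Liouville theorem on the half-space, and shifting at infinity produce a nontrivial function $v$ on $\mathbb{R}^3$ satisfying either the $V$-free limit equation (in the concentrating sub-cases) or the full limit equation (in the strong-convergence sub-case), with mass $\|v\|_2^2\le\alpha_n$ in either case.

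The contradiction then comes from combining the test-function identity and the Pohozaev identity on $\mathbb{R}^3$ for $v$. Eliminating $\lambda_{\alpha_n}$ gives
\[
(a+b\Lambda^2)\|\nabla v\|_2^2 - \tfrac{1}{2}\!\int\!\tilde V v^2\,\mathrm{d}x = \frac{3(q-2)}{2q}\!\int\!|v|^q\,\mathrm{d}x + \frac{3\beta(p-2)}{2p}\!\int\!|v|^p\,\mathrm{d}x,
\]
while isolating it gives
\[
\lambda_{\alpha_n}\|v\|_2^2 = \frac{6-q}{2q}\!\int\!|v|^q\,\mathrm{d}x + \frac{\beta(6-p)}{2p}\!\int\!|v|^p\,\mathrm{d}x - \tfrac{1}{2}\!\int\!\tilde V v^2\,\mathrm{d}x - \!\int\!Vv^2\,\mathrm{d}x.
\]
In Lemma \ref{beta>0-e-strong-ge}(ii) with $\beta>0$, the second display evaluated on the $V$-free shifted limit immediately produces $\lambda_{\alpha_n}>0$ and hence a contradiction. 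With $\beta\le 0$ this direct sign argument breaks down, and this is the main obstacle: I would instead combine the two displays with the Gagliardo-Nirenberg inequality applied via $\|v\|_2^2\le\alpha_n$, the strict inequality $p<q$ (used to dominate the now-positive $|\beta|\int|v|^p$ by $\int|v|^q$), and the hypothesis $\|\tilde V_+\|_{\frac{3}{2}}<2aS$ (used in the strong-convergence sub-case, via the Sobolev inequality, to absorb $\int\tilde V v^2$ into a strictly positive multiple of $\|\nabla v\|_2^2$). This quantitative absorption is precisely what forces the condition $\|\tilde V_+\|_{\frac{3}{2}}<2aS$ to appear in the statement of Theorem \ref{betale0-e>0-Omega}(ii).
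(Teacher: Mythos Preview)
Your proposal is correct and follows essentially the same approach as the paper: passing $s\to 1$ with the uniform bound of Lemma~\ref{nabla-u-bdd} for part~(i), and for part~(ii) combining the test-function and Pohozaev identities on the limit profile $v$, using $\|\tilde V_+\|_{3/2}<2aS$ to absorb the $\tilde V$ term and extract a diverging lower bound on $\|\nabla v\|_2^2$ as $\alpha\to 0$, which then forces $\lambda_\alpha>0$. The one notable difference is that the paper does \emph{not} import the defect sub-dichotomy on $u_{r,\alpha_n}-u_{\alpha_n}$ from Lemma~\ref{beta>0-e-strong-ge}: because $\beta\le 0$, the $\beta(p-2)\int|v|^p$ term in your first display already has the favorable sign, so the Case ``$u_\alpha\neq 0$'' can be handled directly by the Pohozaev combination (this is exactly Case~1 in the paper's proof), and Case ``$u_{\alpha_n}=0$'' reduces to the same computation after one translation, with no further splitting needed.
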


\begin{proof}
 The proof of (i) is similar to that of Lemma \ref{beta>0-e-strong-ge}, so we omit it here.
For (ii), let $(\lambda_{r,\alpha},u_{r,\alpha})$ be the solution of problem \eqref{main-eq-omega}. The regularity theory of elliptic partial differential equations yields $u_{r,\alpha}\in C(\Omega_r)$.  By \eqref{g-alpha>0}, we similarly get that  there exists $\alpha'_2>0$ such that
\begin{equation}\label{g-alpha}
g(\alpha)=\underset{r\to\infty}{\liminf}\,\underset{\Omega_r}{\max}\,u_{r,\alpha}>0
\end{equation}
for any $0<\alpha<\alpha'_2$.
Consider $H_0^1(\Omega_r)$ as a subspace of $H^1(\mathbb{R}^{3})$ for any $r>0$. It follows from Lemma \ref{nabla-u-bdd} that the set of solutions $\{u_{r,\alpha}:r>r_\alpha\}$ established in Theorem \ref{beta>0-s-omega} is bounded in $H^1(\mathbb{R}^{3})$, we assume that $\int_{\Omega_r}|\nabla u_{r,\alpha}|^2\mathrm{d}x\to\Lambda^2$, so there exist $u_\alpha\in H^1(\mathbb{R}^{3})$ and $\lambda_\alpha\in\mathbb{R}$ such that as $r\to\infty$ up to a subsequence:
\begin{gather*}
  \lambda_{r,\alpha}\to\lambda_\alpha,
  \quad u_{r,\alpha}\rightharpoonup u_\alpha\quad\text{in}\ H^1(\mathbb{R}^{3}),\\
  u_{r,\alpha}\to u_\alpha\quad\text{in}\ L_{loc}^{t}(\mathbb{R}^{3})\ \text{for all}\ 2\le t<6,
\end{gather*}
and $u_\alpha$ is a solution of the equation
\[
-(a+b\Lambda^2)\Delta u_\alpha+Vu_\alpha+\lambda_\alpha u_\alpha=|u_\alpha|^{q-2}u_\alpha
+\beta|u_\alpha|^{p-2}u_\alpha\quad \text{in}\ \mathbb{R}^{3}.
\]
Next, it is explained in two cases.

\textbf{Case1:} $u_\alpha\neq0$ for $\alpha>0$ small.
Therefore,
\begin{equation}\label{alpha-eq}
(a+b\Lambda^2)\int_{\mathbb{R}^{3}}|\nabla u_\alpha|^2\mathrm{d}x+\int_{\mathbb{R}^{3}}Vu_\alpha^2\mathrm{d}x
+\lambda_\alpha\int_{\mathbb{R}^{3}}u_\alpha^2\mathrm{d}x
=\int_{\mathbb{R}^{3}}|u_\alpha|^{q}\mathrm{d}x
+\beta\int_{\mathbb{R}^{3}}|u_\alpha|^p\mathrm{d}x
\end{equation}
and the Pohozaev identity gives
\begin{eqnarray}\label{alpha-eq-pohozaev}
&&\frac{(a+b\Lambda^2)}{6}\int_{\mathbb{R}^{3}}|\nabla u_\alpha|^2\mathrm{d}x
+\frac{1}{6}\int_{\mathbb{R}^{3}}\tilde{V}u_\alpha^2\mathrm{d}x
+\frac{1}{2}\int_{\mathbb{R}^{3}}Vu_\alpha^2\mathrm{d}x
+\frac{\lambda_\alpha}{2}\int_{\mathbb{R}^{3}}u_\alpha^2\mathrm{d}x\notag\\
&=&\frac{1}{q}\int_{\mathbb{R}^{3}}|u_\alpha|^{q}\mathrm{d}x
+\frac{\beta}{p}\int_{\mathbb{R}^{3}}|u_\alpha|^p\mathrm{d}x.
\end{eqnarray}
Next it follows from \eqref{alpha-eq} and \eqref{alpha-eq-pohozaev} that
\begin{eqnarray*}
\frac{2a-\|\tilde{V}_+\|_{\frac{3}{2}}S^{-1}}{6}\int_{\mathbb{R}^{3}}|\nabla u_\alpha|^2\mathrm{d}x&\le&\frac{(a+b\Lambda^2)}{3}\int_{\mathbb{R}^{3}}|\nabla u_\alpha|^2\mathrm{d}x
-\frac{1}{6}\int_{\mathbb{R}^{3}}\tilde{V}u_\alpha^2\mathrm{d}x\\
&=&\frac{(p-2)\beta}{2p}\int_{\mathbb{R}^{3}}|u_\alpha|^p\mathrm{d}x
+\frac{(q-2)}{2q}\int_{\mathbb{R}^{3}}|u_\alpha|^q\mathrm{d}x\notag\\
&\le&\frac{C_q(q-2)}{2q}\bigg(\int_{\mathbb{R}^{3}}|u_\alpha|^2\mathrm{d}x\bigg)^{\frac{6-q}{4}}
\bigg(\int_{\mathbb{R}^{3}}|\nabla u_\alpha|^2\mathrm{d}x\bigg)^{\frac{3(q-2)}{4}}.
\end{eqnarray*}
Thus, there holds for $u_\alpha \neq 0$:

\begin{equation}\label{equ:241224-e1}
    \int_{\mathbb{R}^{3}}|\nabla u_\alpha|^2\mathrm{d}x \geq \left(\frac{q(2a-\|\tilde{V}_+\|_{\frac{3}{2}}S^{-1})}{3C_q(q-2)}\right)^{\frac{4}{3(q-2)-4}} \bigg(\frac{1}{\alpha}\bigg)^{\frac{6-q}{3(q-2)-4}}.
\end{equation}

On the other hands,
\begin{eqnarray*}
\frac{2-q}{2q}\lambda_\alpha\int_{\mathbb{R}^{3}}u_\alpha^2\mathrm{d}x
&=&\frac{q-6}{6q}(a+b\Lambda^2)\int_{\mathbb{R}^{3}}|\nabla u_\alpha|^2\mathrm{d}x
+\frac{1}{6}\int_{\mathbb{R}^{3}}\tilde{V}u_\alpha^2\mathrm{d}x\\
&&\quad+\frac{q-2}{2q}\int_{\mathbb{R}^{3}}Vu_\alpha^2\mathrm{d}x
-\frac{(q-p)\beta}{pq}\int_{\mathbb{R}^{3}}|u_\alpha|^p\mathrm{d}x\notag\\
&\le&\frac{q-6}{6q}(a+b\Lambda^2)\int_{\mathbb{R}^{3}}|\nabla u_\alpha|^2\mathrm{d}x
+\frac{\|\tilde{V}\|_\infty}{6}\alpha+\frac{q-2}{2q}\|V\|_\infty\alpha\\
&&\quad-\frac{(q-p)\beta C_{P}}{pq}\alpha^{\frac{6-p}{4}}\bigg(\int_{\mathbb{R}^{3}}|\nabla u_\alpha|^2\mathrm{d}x\bigg)^{\frac{3(p-2)}{4}}.
\end{eqnarray*}

From the above two inequalities, we conclude that
\begin{eqnarray}\label{equ:241224-e2}\nonumber
\frac{2-q}{2q}\lambda_\alpha\int_{\mathbb{R}^{3}}u_\alpha^2\mathrm{d}x
&\le&\frac{q-6}{6q}(a+b\Lambda^2)\bigg(\frac{q(2a-\|\tilde{V}_+\|_{\frac{3}{2}}S^{-1})}{3C_q(q-2)}\bigg)^{\frac{4}{3q-10}}\alpha^{\frac{q-6}{3q-10}}
+\frac{\|\tilde{V}\|_\infty}{6}\alpha\\  \nonumber
&&\quad+\frac{q-2}{2q}\|V\|_\infty\alpha-\frac{(q-p)\beta C_{P}}{pq}\alpha^{\frac{6-p}{4}}\bigg(\int_{\mathbb{R}^{3}}|\nabla u_\alpha|^2\mathrm{d}x\bigg)^{\frac{3(p-2)}{4}}\\
&\to&-\infty\quad\text{as}\ \alpha\to0.
\end{eqnarray}
Therefore,  $\lambda_\alpha>0$ for $\alpha>0$ small.

\textbf{Case2: }There is a sequence $\alpha_n\to0$
such that $u_{\alpha_n}=0$. In view of \eqref{g-alpha}, $u_{\alpha_n}=0$ for
any $\alpha_n\in\left(0,\alpha'_2\right)$. Let $z_{r,\alpha_n}\in\Omega_r$ be such that $u_{r,\alpha_n}(z_{r,\alpha_n})=\max_{\Omega_r}u_{r,\alpha_n}$, it holds $|z_{r,\alpha_n}|\to\infty$ as $r\to\infty$. Otherwise, there exists $z_0\in\mathbb{R}$ such
that $z_{r,\alpha_n}\to z_0$, and hence $u_{\alpha_n}(z_0)\ge d_{\alpha_n}> 0$. This contradicts
$u_{\alpha_n}=0$.  Moreover, dist$(z_{r,\alpha_n},\partial\Omega_r)\to\infty$ as $r\to\infty$ by an argument similar to that in Lemma \ref{lambda-r>0-betale0}.
Now, for $n$ fixed, let $v_r(x)=u_{r,\alpha_n}(x+z_{r,\alpha_n})$ for any $x\in\Sigma_r:=\{x\in\mathbb{R}^3:x+z_{r,\alpha_n}\in\Omega_r\}$.
It follows from Lemma \ref{nabla-u-bdd} that $v_r$ is bounded in $H^1(\mathbb{R}^3)$, and there is $v\in H^1(\mathbb{R}^3)$ with $v\neq0$ such that $v_r\rightharpoonup v$ as $r\to\infty$.
Observe that for every $\varphi\in C_c^\infty(\Omega_r)$, there is  $r$ large such that $\varphi(\cdot-z_{r,\alpha_n})\in C_c^\infty(\Omega_r)$ due to dist$(z_{r,\alpha_n},\partial\Omega_r)\to\infty$ as $r\to\infty$. It then follows that
\begin{eqnarray}\label{varphi-z}
&&\int_{\Omega_r}|u_{r,\alpha_n}|^{q-2}u_{r,\alpha_n}\varphi(\cdot-z_{r,\alpha_n})\mathrm{d}x
+\beta\int_{\Omega_r}|u_{r,\alpha_n}|^{p-2}u_{r,\alpha_n}\varphi(\cdot-z_{r,\alpha_n})\mathrm{d}x\notag\\
&=&(a+b\int_{\Omega_r}|\nabla u_{r,\alpha_n}|^2\mathrm{d}x)\int_{\Omega_r}\nabla u_{r,\alpha_n}\nabla\varphi(\cdot-z_{r,\alpha_n})\mathrm{d}x
+\int_{\Omega_r}Vu_{r,\alpha_n}\varphi(\cdot-z_{r,\alpha_n})\mathrm{d}x\notag\\
&&\quad
+\lambda_{r,\alpha_n}\int_{\Omega_r}u_{r,\alpha_n}\varphi(\cdot-z_{r,\alpha_n})\mathrm{d}x.
\end{eqnarray}
Since $|z_{r,\alpha_n}|\to\infty$ as $r\to\infty$, we have
\begin{eqnarray}\label{v-0}
\Big|\int_{\Omega_r}Vu_{r,\alpha_n}\varphi(\cdot-z_{r,\alpha_n})\mathrm{d}x\Big|
&\le&\int_{\text{Supp}\varphi}\Big|V(\cdot+z_{r,\alpha_n})v_{r}\varphi\Big|\mathrm{d}x\notag\\
&\le&\|v_{r}\|_{6}\|\varphi\|_{6}
\bigg(\int_{\text{Supp}\varphi}|V(\cdot+z_{r,\alpha_n})|^{\frac{3}{2}}\mathrm{d}x\bigg)^{\frac{2}{3}}\notag\\
&\le&\|v_{r}\|_{6}\|\varphi\|_{6}
\bigg(\int_{\mathbb{R}^{3}\setminus B_{\frac{|z_{r,\alpha_n}|}{2}}}|V(\cdot+z_{r,\alpha_n})|
^{\frac{3}{2}}\mathrm{d}x\bigg)^{\frac{2}{3}}\notag\\
&\to&0\quad\text{as}\ r\to\infty.
\end{eqnarray}
Letting $r\to\infty$ in \eqref{varphi-z} we obtain for $\varphi\in C_c^\infty(\mathbb{R}^{3})$:
\[
(a+b\Lambda^2)\int_{\mathbb{R}^{3}}\nabla v\cdot\nabla\varphi\mathrm{d}x
+\lambda_{\alpha_n}\int_{\mathbb{R}^{3}}v\varphi\mathrm{d}x
=\int_{\mathbb{R}^{3}}|v|^{q-2}v\varphi\mathrm{d}x
+\beta\int_{\mathbb{R}^{3}}|v|^{p-2}v\varphi\mathrm{d}x.
\]
 Now we argue as in the case that $u_\alpha\neq0$ above, there exists $\tilde{\alpha}<\alpha'_2$ such that $\lambda_\alpha>0$ for any $\alpha\in\left(0,\tilde{\alpha}\right)$, the proof is now complete.
\end{proof}

\noindent\textbf{Proof of Theorem \ref{betale0-e>0-Omega}:} The proof is an immediate consequence of Theorem \ref{beta>0-s-omega}, Lemma \ref{lambda-r>0-betale0}
and Remark \ref{ur-ubdd-beta}.\qed

\section{Normalized solution in $\mathbb{R}^{3}$}

The aim of this section is to obtain the existence of normalized solution to Eq.\eqref{main-eq} in  $\mathbb{R}^{3}$. For this purpose, under the assumptions of  Theorem \ref{beta>0-e<0-Omega}(resp. Theorem \ref{beta>0-e>0-Omega} and Theorem \ref{betale0-e>0-Omega}), we first obtain the solutions $(\lambda_r,u_r)$ to \eqref{main-eq-omega} with $\Omega_r=B_r$, see  Theorem \ref{beta>0-e<0-Omega}(resp. Theorem \ref{beta>0-e>0-Omega} and Theorem \ref{betale0-e>0-Omega}). Then by Theorem \ref{th-b-e-r-lambda} (resp. Lemma \ref{beta>0-e-strong-ge} and Lemma \ref{lambda-r>0-betale0}), we conclude that $u_r$ is bounded uniformly in $r$ and $\underset{r\to\infty}{\liminf}\,\lambda_r>0$.
The next lemma is to analyze the compactness of normalized solutions $u_r$ for \eqref{main-eq-s-omega} with $\Omega=B_r$ as  $r\to\infty$.

\begin{lemma}\label{rinftycom}
Under the assumptions of  Theorem \ref{beta>0-e<0-Omega}(resp. Theorem \ref{beta>0-e>0-Omega} and Theorem \ref{betale0-e>0-Omega}), let $\{(\lambda_r,u_r)\}$ be a sequence of solutions of \eqref{main-eq-omega} with $\Omega_r=B_r$. Then there exist a subsequence (still denoted by $\{(\lambda_r,u_r)\}$), a $u_0\in H^1(\mathbb{R}^{3})$ satisfying
\[
  -(a+bB^2)\Delta u+Vu+\lambda u=|u|^{q-2}u+\beta|u|^{p-2}u\quad\text{in}\ \mathbb{R}^{3},
\]
$l\in\mathbb{N}\cup\{0\}$, nontrivial solutions  $w^1,...,w^l\in H^1(\mathbb{R}^3)\setminus\{0\}$ of the following problem
  \begin{equation}\label{omega}
  -(a+bB^2)\Delta u+\lambda u=|u|^{q-2}u+\beta|u|^{p-2}u\quad\text{in}\ \mathbb{R}^{3}
  \end{equation}
such that
\[
\lambda_r\to\lambda>0,\quad\Big\|u_r-u_0-\Sigma_{k=1}^{l}w^k(\cdot-z_r^k)\Big\|_{H^1}\to0\quad \text{as}\ r\to\infty,
\]
where $B:=\lim_{r\to\infty}\int_{\mathbb{R}^{3}}|\nabla u_r|^2\mathrm{d}x$ and $\{z^k_r\}\subset\mathbb{R}^{3}$ with $z^k_r\in B_r$  satisfying
\begin{equation}\label{zk-dist}
|z^k_r|\to\infty,\quad dist(z^k_r,\partial B_r)\to\infty,\quad|z^k_r-z^{k'}_r|\to\infty
\end{equation}
for any $k,k'=1,...,l$ and $k\neq k'$.
\end{lemma}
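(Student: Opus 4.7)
The plan is a profile decomposition in the spirit of Bahouri--G\'erard adapted to the Kirchhoff setting: I would peel off a weak $H^1$-limit $u_0$ first, then successively extract bubbles centered around points escaping to infinity, and finally show the remainder vanishes strongly. The nonlocal Kirchhoff prefactor is treated by freezing the limit $B^2:=\lim\|\nabla u_r\|_2^2$ throughout.

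First I would use the uniform $H^1$-bound on $\{u_r\}$ (from Lemma \ref{nabla-u-bdd-j}, Lemma \ref{nabla-u-bdd}, or the $L^\infty$ bound in Lemma \ref{ur-ubdd}, depending on the case) together with $\liminf_{r\to\infty}\lambda_r>0$ (Theorem \ref{th-b-e-r-lambda}, Lemma \ref{beta>0-e-strong-ge}, or Lemma \ref{lambda-r>0-betela0}) to extract, along a subsequence, $u_r\rightharpoonup u_0$ in $H^1(\mathbb{R}^{3})$, $\int_{\mathbb{R}^3}|\nabla u_r|^2\,\mathrm{d}x\to B^2$, $\lambda_r\to\lambda>0$, and $u_r\to u_0$ in $L^t_{\mathrm{loc}}(\mathbb{R}^3)$ for every $2\le t<6$. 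Testing \eqref{main-eq-omega} against $\varphi\in C_c^\infty(\mathbb{R}^3)$ (which belongs to $H_0^1(B_r)$ for $r$ large) and exploiting $V\in L^{3/2}(\mathbb{R}^3)$ in the term $\int Vu_r\varphi\,\mathrm{d}x$, one passes to the limit and finds that $u_0$ solves the Kirchhoff equation with coefficient $a+bB^2$ and potential $V$.

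Next, set $v_r^{(1)}:=u_r-u_0$. If $v_r^{(1)}\to 0$ in $L^t(\mathbb{R}^3)$ for all $2<t<6$, then by Brezis--Lieb and the weak convergence of the nonlinear terms one upgrades to $u_r\to u_0$ in $H^1$ and we are done with $l=0$. Otherwise, Lions' vanishing lemma yields $z_r^1\in B_r$ with $\int_{B(z_r^1,1)}|v_r^{(1)}|^2\,\mathrm{d}x\ge d>0$. Since $v_r^{(1)}\rightharpoonup 0$ in $H^1$ and local compactness holds, we must have $|z_r^1|\to\infty$, and the half-space argument already used in Lemma \ref{lambda-r>0-betale0} (cf.\ \eqref{dist} and \eqref{V-vanish}) forces $\mathrm{dist}(z_r^1,\partial B_r)\to\infty$, for otherwise the weak limit would be a nontrivial nonnegative solution on a half-space, forbidden by the Liouville theorem of \cite{EL82}. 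Then $v_r^{(1)}(\cdot+z_r^1)\rightharpoonup w^1\neq 0$ in $H^1$, and since the translates $V(\cdot+z_r^1)$ converge to $0$ and remain bounded in $L^{3/2}$, the translated equation passes to the limit and shows that $w^1$ solves \eqref{omega} with the frozen coefficient $a+bB^2$ and the same $\lambda>0$. Iterating with $v_r^{(k+1)}:=u_r-u_0-\sum_{j=1}^{k}w^j(\cdot-z_r^j)$, the Brezis--Lieb identity
\begin{equation*}
\|\nabla u_r\|_2^2=\|\nabla u_0\|_2^2+\sum_{j=1}^{k}\|\nabla w^j\|_2^2+\|\nabla v_r^{(k+1)}\|_2^2+o(1)
\end{equation*}
combined with the orthogonality $|z_r^j-z_r^{j'}|\to\infty$ for $j\neq j'$ (which follows because any bounded subsequential limit of $z_r^{k+1}-z_r^j$ would make the new profile be absorbed into $w^j$) delivers the analogous $L^q$ and $L^p$ splittings. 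Each bubble is a nontrivial $H^1$-solution of \eqref{omega} with $\lambda>0$, so a Pohozaev--Nehari lower bound yields $\|w^j\|_{H^1}^2\ge c_0>0$ uniformly in $j$, forcing termination after finitely many steps; a final coercivity argument on the difference equation (using $\lambda>0$) then promotes $v_r^{(l+1)}\to 0$ in $L^t$ to strong convergence in $H^1$.

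The main obstacle is keeping the nonlocal Kirchhoff coefficient consistent through the decomposition: both the equation for $u_0$ and each bubble equation for $w^j$ must carry the same constant $a+bB^2$, and closing the bookkeeping requires $B^2=\|\nabla u_0\|_2^2+\sum_{j=1}^l\|\nabla w^j\|_2^2$, which only becomes true once the residue $\|\nabla v_r^{(l+1)}\|_2^2\to 0$. Thus the Kirchhoff prefactor must converge simultaneously with the weak limits in every translated frame, and the uniform lower bound $\|w^j\|_{H^1}^2\ge c_0$ needed to terminate the iteration depends crucially on $\liminf_{r\to\infty}\lambda_r>0$ so that the limit equation \eqref{omega} admits no asymptotically vanishing nontrivial solutions; this is precisely why the three case-by-case lower bounds on $\lambda_r$ proved in the previous sections are invoked as inputs rather than reproved here.
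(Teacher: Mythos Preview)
Your proposal is correct and follows essentially the same profile-decomposition strategy as the paper: extract the weak limit $u_0$, apply Lions' concentration-compactness to the remainder, use the half-space Liouville theorem to force $\mathrm{dist}(z_r^k,\partial B_r)\to\infty$, pass to the translated limit equation with the frozen coefficient $a+bB^2$, and terminate via a uniform lower bound on $\|w^j\|_{H^1}$ coming from Pohozaev-type information. Your handling of the termination step is in fact slightly cleaner than the paper's, which splits into cases $\beta\le 0$ and $\beta>0$ and in the latter case invokes positivity of the mountain-pass energy rather than a direct Nehari/Pohozaev bound.
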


\begin{proof}
   Recall the fact that $u_r$ is bounded uniformly in $r$, there exist $ u_0\in H^1(\mathbb{R}^{3})$ and $\lambda>0$ such that, up to a subsequence,
\[
\lambda_r\to\lambda,\quad u_r\rightharpoonup u_0\neq 0\quad\text{in}\ H^1(\mathbb{R}^{3}).
\]
Moreover, $u_0$ is a solution of
\[
-(a+bB^2)\Delta u+Vu+\lambda u=|u|^{q-2}u+\beta|u|^{p-2}u\quad\text{in}\ \mathbb{R}^{3}.
\]
Set $\nu_r^1=u_r-u_0$, then $\nu_r^1\rightharpoonup0$ as $r\to\infty$ in $H^1(\mathbb{R}^{3})$.
We divide the proof into three steps:

\textbf{Step1: } The vanishing case occurs. That is,
\[
\underset{r\to\infty}{\limsup}\underset{z\in\mathbb{R}^{3}}{\sup}\int_{B(z,1)}|\nu_r^1|^2\mathrm{d}x=0.
\]
By the Brezis-Lieb lemma, the concentration compactness principle and the Vitali convergence theorem,
\begin{eqnarray*}
o_r(1)&=&\int_{\mathbb{R}^{3}}|\nu_r|^{q}\mathrm{d}x
+\beta\int_{\mathbb{R}^{3}}|\nu_r|^{p}\mathrm{d}x\\
&=&\int_{\mathbb{R}^{3}}|u_r|^{q}\mathrm{d}x
+\beta\int_{\mathbb{R}^{3}}|u_r|^{p}\mathrm{d}x
-\int_{\mathbb{R}^{3}}|u_0|^{q}\mathrm{d}x
-\beta\int_{\mathbb{R}^{3}}|u_0|^{p}\mathrm{d}x+o_r(1)\\
&=&(a+b\int_{\mathbb{R}^{3}}|\nabla u_r|^2\mathrm{d}x)\int_{\mathbb{R}^{3}}|\nabla u_r|^2\mathrm{d}x+\int_{\mathbb{R}^{3}}Vu_r^2\mathrm{d}x
+\lambda\int_{\mathbb{R}^{3}}|u_r|^2\mathrm{d}x\\
&&\quad-(a+bB^2)\int_{\mathbb{R}^{3}}|\nabla u_0|^2\mathrm{d}x-\int_{\mathbb{R}^{3}}Vu_0^2\mathrm{d}x
-\lambda\int_{\mathbb{R}^{3}}|u_0|^2\mathrm{d}x\\
&\ge&a\int_{\mathbb{R}^{3}}|\nabla (u_r-u_0)|^2\mathrm{d}x
+\lambda\int_{\mathbb{R}^{3}}|u_r-u_0|^2\mathrm{d}x+o_r(1).
\end{eqnarray*}
As a consequence $\nu_r^1=u_r-u_0\to 0$ in $H^1(\mathbb{R}^{3})$, and the lemma holds for $l=0$.

\textbf{Step2: }The vanishing does not occur. That is, there exists $z_r^1\in B_r$ such that
\begin{equation}\label{vrd}
\int_{B(z_r^1,1)}|\nu_r^1|^2\mathrm{d}x>d>0.
\end{equation}
We first claim that $|z_r^1|\to\infty$ as $r\to\infty$. Assume  by contradiction that  there is $R>0$ such that $|z_r^1|<R$,
\[
\int_{B_{R+1}}|\nu_r^1|^2\mathrm{d}x\ge\int_{B(z_r^1,1)}|\nu_r^1|^2\mathrm{d}x>d,
\]
contradicting $\nu_r^1\rightharpoonup0$ as $r\to\infty$. Then $|z_r^1|\to\infty$ as $r\to\infty$. Base on this, similar to \eqref{dist} we further have that dist$(z_r^1,\partial B_r)\to\infty$ as $r\to\infty$. Thus $\omega_r^1:=\nu_r^1(\cdot+z_r^1)\rightharpoonup\omega^1$ in $H^1(\mathbb{R}^{N})$, and after a calculation similar to \eqref{v-0}, we get that  $\omega^1$ satisfies
\[
-(a+bB^2)\Delta \omega^1+\lambda\omega^1=|\omega^1|^{q-2}\omega^1+\beta|\omega^1|^{p-2}\omega^1\quad\text{in}\ \mathbb{R}^{3}.
\]
Moreover, by \eqref{vrd},
\[
\int_{B_1}|\omega^1|^2\mathrm{d}x
=\underset{r\to\infty}{\lim}\int_{B_1}|\nu_r^1(x+z_r^1)|^2\mathrm{d}x
=\underset{r\to\infty}{\lim}\int_{B(z_r^1,1)}|\nu_r^1(x)|^2\mathrm{d}x>d>0.
\]
This implies $\omega^1\neq0$. Setting $v_r^2(x)=\nu_r^1(x)-\omega^1(x-z_r^1)$, then $\nu_r^2\rightharpoonup0$ as $r\to\infty$ in $H^1(\mathbb{R}^{3})$.
\textbf{Case1:} If
\[
\underset{r\to\infty}{\limsup}\underset{z\in\mathbb{R}^{3}}{\sup}
\int_{B(z_r^1,1)}|\nu_r^2(x)|^2\mathrm{d}x=0
\]
occurs, then we stop and  go to \textbf{step 1}. Consequently the lemma holds for $l=1$.
\textbf{Case2:} If there exists $z_r^2\in B_r$ such that
\[
\int_{B(z_r^2,1)}|\nu_r^2(x)|^2\mathrm{d}x>d>0,
\]
we first \textbf{claim} that $|z_r^1-z_r^2|\to\infty$ as $r\to\infty$. We suppose by the contrary that there is $R>0$ such that $|z_r^1-z_r^2|<R$ for any large $r$. Then
\begin{eqnarray}
\int_{B(z_r^2,1)}|\nu_r^2(x)|^2\mathrm{d}x&\le&\int_{B(z_r^1,R+1)}|\nu_r^2(x)|^2\mathrm{d}x\\
&=&\int_{B(z_r^1,R+1)}|\nu_r^1(x)-\omega^1(x-z_r^1)|^2\mathrm{d}x\notag\\
&=&\int_{B(0,R+1)}|\omega_r^1(x)-\omega^1(x)|^2\mathrm{d}x\notag\\
&\to&0\quad\text{as}\ r\to\infty,
\end{eqnarray}
which is impossible. Then the claim is valid. Base on this, \eqref{zk-dist} holds through a discussion similar to \eqref{dist}, and there is $\omega^2\neq0$ satisfying \eqref{omega}. Let $\nu_r^3=\nu_r^2(x)-\omega^2(x-z_r^2)$, repeating the above process.

\textbf{Step3:} we try to prove that the process  in \textbf{Step2} can be repeated at most finitely many times. For any solution $\omega\neq0$ of \eqref{omega},
let us  first \textbf{claim} $\|\nabla\omega\|_2^2$ is  bounded from below. Indeed, from \eqref{omega}, we have
%Consider first . If $\omega$ satisfies $\|\omega\|_2^2\le\alpha$ and
%\[
%-\Delta \omega+\lambda\omega=|\omega|^{2^*-2}\omega+\beta|\omega|^{p-2}\omega
%\]
%then
\[
(a+b\int_{\mathbb{R}^{3}}|\nabla \omega|^2\mathrm{d}x)\int_{\mathbb{R}^{3}}|\nabla \omega|^2\mathrm{d}x
+\lambda\int_{\mathbb{R}^{3}}|\omega|^2\mathrm{d}x
=\int_{\mathbb{R}^{3}}|\omega|^{q}\mathrm{d}x
+\beta\int_{\mathbb{R}^{3}}|\omega|^{p}\mathrm{d}x.
\]
This together with  the Pohozaev identity
\[
\frac{(a+b\int_{\mathbb{R}^{3}}|\nabla \omega|^2\mathrm{d}x)}{6}\int_{\mathbb{R}^{3}}|\nabla \omega|^2\mathrm{d}x
+\frac{\lambda}{2}\int_{\mathbb{R}^{3}}|\omega|^2\mathrm{d}x
=\frac{1}{q}\int_{\mathbb{R}^{3}}|\omega|^{q}\mathrm{d}x
+\frac{\beta}{p}\int_{\mathbb{R}^{3}}|\omega|^{p}\mathrm{d}x
\]
leads to
\[
\frac{(a+b\int_{\mathbb{R}^{3}}|\nabla \omega|^2\mathrm{d}x)}{3}\int_{\mathbb{R}^{3}}|\nabla \omega|^2\mathrm{d}x
=\frac{(q-2)}{2q}\int_{\mathbb{R}^{3}}|\omega|^{q}\mathrm{d}x
+\frac{\beta(p-2)}{2p}\int_{\mathbb{R}^{3}}|\omega|^{p}\mathrm{d}x.
\]
When the case $\beta\le0$ occurs,  it follows from the Gagliardo-Nirenberg inequality that
\begin{eqnarray*}
\frac{a}{3}\int_{\mathbb{R}^{3}}|\nabla \omega|^2\mathrm{d}x&\le&\frac{(a+b\int_{\mathbb{R}^{3}}|\nabla \omega|^2\mathrm{d}x)}{3}\int_{\mathbb{R}^{3}}|\nabla \omega|^2\mathrm{d}x\\
&\le&\frac{(q-2)}{2q}\int_{\mathbb{R}^{3}}|\omega|^{q}\mathrm{d}x\\
&\le&\frac{(q-2)C_q}{2q}\alpha^{\frac{6-q}{2}}\bigg(\int_{\mathbb{R}^{3}}|\nabla \omega|^2\mathrm{d}x\bigg)^{\frac{3(q-2)}{4}}.
\end{eqnarray*}
Then $\|\nabla\omega\|_2^2$ is  bounded from below due to $\frac{14}{3}<q<6$.
When the case $\beta>0$ occurs. As we know, the ground state energy of   \eqref{omega} is positive due to it is the mountain pass value. That is,
\[
\frac{1}{2}a\int_{\mathbb{R}^{3}}|\nabla \omega|^2\mathrm{d}x
+\frac{1}{4}b\bigg(\int_{\mathbb{R}^{3}}|\nabla \omega|^2\mathrm{d}x\bigg)^2
-\frac{1}{q}\int_{\mathbb{R}^{3}}|\omega|^{q}\mathrm{d}x
-\frac{\beta}{p}\int_{\mathbb{R}^{3}}|\omega|^{p}\mathrm{d}x>0.
\]
This together with  the Pohozaev identity leads to
\[
\frac{(a+b\int_{\mathbb{R}^{3}}|\nabla \omega|^2\mathrm{d}x)}{3}\int_{\mathbb{R}^{3}}|\nabla \omega|^2\mathrm{d}x>0.
\]
Therefore, $\|\nabla\omega\|_2^2$ is  bounded from below in $\beta\in\mathbb{R}$.
%If the process has been repeated m times then
%\begin{eqnarray}
%0&\le&\underset{r\to\infty}{\lim}\|u_r-u_0-\Sigma_{k=1}^{m-1}\omega^k(\cdot-z_r^k)\|_{H^1}\notag\\
%&\le&\underset{r\to\infty}{\lim}\|u_r\|_{H^1}-\|u_0\|_{H^1}
%-\Sigma_{k=1}^{m-1}\|\omega^k(\cdot-z_r^k)\|_{H^1},
%\end{eqnarray}
%and
We suppose that it can be repeated at most m times. Then
\begin{equation}
\underset{r\to\infty}{\lim}\|u_r\|_{H^1}\ge\|u_0\|_{H^1}
+\Sigma_{k=1}^{m-1}\|\omega^k(\cdot-z_r^k)\|_{H^1}
\end{equation}
If $m=\infty$, this contradicts with the fact that $u_r$ is bounded uniformly in $r$.
\end{proof}

\noindent\textbf{Proofs of Theorems \ref{beta>0-e<0-rn-}:}   This part follows a standard technique, but let us quickly explain for the reader’s convenience.
Let $\{(\lambda_r,u_r)\}$ be a sequence of solutions of \eqref{main-eq-omega} with $\Omega_r=B_r$.
In view of Lemma \ref{rinftycom}, if $l=0$,  $u_r\to u_0$ as $r\to\infty$ in $H^1(\mathbb{R}^3)$, then Theorems \ref{beta>0-e<0-rn-}  hold. Next, we just need to state that the case $l>0$ is impossible  through a discussion similar to \cite{BQZ24}.
In fact, if $l>0$, without loss of generality, we assume that
$|z^1_r|\le\min\{|z_r^k|:k=1,...,l\}$.
Let
\[
\frac{|z_r^k-z_r^1|}{|z_r^1|}\to d_k\in\left[0,\infty\right],
\quad K=\{k:d_k\neq0\ \text{for}\ k=1,...,l\},\]
and set $v^*=\frac{1}{4}\min\{1,\rho,d^*\}$ with $d^*=\underset{k\in K}{\min}\,d_k>0$. Fixed $\delta\in\left(0,v^*\right)$ and consider the annulus
\[
A_r=B\bigg(z_r^1,\frac{3}{2}\delta|z_r^1|\bigg)\setminus B\bigg(z_r^1,\frac{1}{2}\delta|z_r^1|\bigg).
\]
It is easy to verify
 that $\text{dist}(z_r^k,A_r)\ge\frac{1}{4}\delta|z_r^1|$ for any $k=1,...,l$ and large $r$.
 By Lemma \ref{rinftycom}, one has that  for any $2\le s< 2^*$,
\[
\|u_r\|_{L^s(A_r)}=\Big\|u_0+\Sigma_{k=1}^{l}\omega^k(\cdot-z_r^k)\Big\|_{L^s(A_r)}
\le\|u_0\|_{L^s(A_r)}+\Sigma_{k=1}^{l}\|\omega^k(\cdot-z_r^k)\|_{L^s(A_r)}.
\]
Then
\[\|u_0\|_{L^s(A_r)}=\int_{B(z_r^1,\frac{3}{2}\delta|z_r^1|)}|u_0|^s\mathrm{d}x
\le\int_{ B^c_{\delta|z_r^1|/2}}|u_0|^s\mathrm{d}x\to0\]
and
\[
\|\omega^k(\cdot-z_r^k)\|_{L^s(A_r)}
=\int_{A_r}|\omega^k(\cdot-z_r^k)|^s\mathrm{d}x
\le\int_{B^c_{\delta|z_r^1|/4}}|\omega^k|^s\mathrm{d}x\to0.
\]
As a result,  $\|u_r\|_{L^s(A_r)}\to0$ as $r\to\infty$. Observe that  $u_r$ satisfies
\begin{equation}\label{eq-la}
    -a\Delta u_r+Vu_r+\lambda u_r\le |u_r|^{q-2}u_r+\beta|u_r|^{p-2}u_r\quad\text{in}\ \mathbb{R}^{3}
\end{equation}
for large r, where $\lambda:=\frac{1}{2}\underset{r\to\infty}{\liminf}\,\lambda_r>0$ due to Lemma \ref{rinftycom}. Setting
\[
R_m=\overline{B(z_r^1,\frac{3}{2}\delta|z_r^1|-m)}\setminus B(z_r^1,\frac{1}{2}\delta|z_r^1|+m)\quad\text{for}\ n\in\mathbb{N}^+,
\]
by \cite[Theorem 8.17]{GT83}, one has that  there is constant $C>0$ independent of r such that for large r,
\begin{equation}\label{lambda}
\|u_r\|_{L^\infty(R_1)}^{q-2}\le C\|u_r\|_{L^2(A_r)}^{q-2}<\frac{\lambda}{8}\quad
\text{and}\quad
|\beta|\cdot\|u_r\|_{L^\infty(R_1)}^{p-2}\le C\|u_r\|_{L^2(A_r)}^{p-2}<\frac{\lambda}{8}.
\end{equation}
Let us set $\phi_m=\xi_m(|x-z_r^1|)$, where $\xi_m\in C_c^\infty(\mathbb{R},\left[0,1\right])$ is a cut-off function with  $|\xi_m'(t)\le4|$ for any $t\in R$, and
\[
\xi_m(t)=
\begin{cases}
    1&\text{if}\ \frac{1}{2}\delta|z_r^1|+m<t<\frac{3}{2}\delta|z_r^1|-m,\\
    0&\text{if}\ t<\frac{1}{2}\delta|z_r^1|+m-1\ \text{or}\  t>\frac{3}{2}\delta|z_r^1|-m+1.
\end{cases}
\]
Testing \eqref{eq-la} with $\phi_m^2u_r$, we have
\begin{eqnarray}\label{text-right}
   &&8\int_{R_{m-1}\setminus R_{m}}|u_r|\cdot|\nabla u_r|\mathrm{d}x\notag\\
   &\ge&a\int_{R_{m-1}}|\nabla u_r|^2\phi_m^2\mathrm{d}x
    +\int_{R_{m-1}}V| u_r|^2\phi_m^2\mathrm{d}x
    +\lambda\int_{R_{m-1}}| u_r|^2\phi_m^2\mathrm{d}x\notag\\
    %&\le-2\int_{R_{m-1}}u_r\phi_m\nabla u_r\cdot\nabla \phi_m\mathrm{d}x
    &&\quad-\int_{R_{m-1}}| u_r|^{q}\phi_m^2\mathrm{d}x
    -\beta\int_{R_{m-1}}| u_r|^{p}\phi_m^2\mathrm{d}x.
\end{eqnarray}
Since $|z_r^1|\to\infty$ and $\underset{|x|\to\infty}{\liminf}\,V(x)=0$ there exists $\bar{r}$ such that $V(x)\ge-\frac{\lambda}{4}$ for any  $x\in A_r$ with $r>\bar{r}$. This together with \eqref{lambda}, we get that
\begin{eqnarray}\label{text-left}
    &&\min\big\{a,\lambda/2\big\}\bigg(\int_{R_{m-1}}|\nabla u_r|^2\phi_m^2\mathrm{d}x+\int_{R_{m-1}}| u_r|^2\phi_m^2\mathrm{d}x\bigg)\notag\\
    &\le&a\int_{R_{m-1}}|\nabla u_r|^2\phi_m^2\mathrm{d}x+\lambda/2\int_{R_{m-1}}| u_r|^2\phi_m^2\mathrm{d}x\notag\\
    &\le&a\int_{R_{m-1}}|\nabla u_r|^2\phi_m^2\mathrm{d}x
    +\int_{R_{m-1}}V| u_r|^2\phi_m^2\mathrm{d}x
    +\lambda\int_{R_{m-1}}| u_r|^2\phi_m^2\mathrm{d}x\notag\\
    &&\quad-\int_{R_{m-1}}| u_r|^{q}\phi_m^2\mathrm{d}x
    -\beta\int_{R_{m-1}}| u_r|^{p}\phi_m^2\mathrm{d}x.
\end{eqnarray}
It follows from \eqref{text-right}-\eqref{text-left} that
\[\int_{R_{m}}|\nabla u_r|^2\phi_m^2\mathrm{d}x+\int_{R_{m}}| u_r|^2\phi_m^2\mathrm{d}x\le\kappa\bigg(\int_{R_{m}\setminus R_{m-1}}|\nabla u_r|^2\phi_m^2\mathrm{d}x+\int_{R_{m}\setminus R_{m-1}}|u_r|^2\phi_m^2\mathrm{d}x\bigg),\]
where $\kappa=4\max\{a,\frac{2}{\lambda}\}$.
From this, one has that
\[
a_m\le\theta a_{m-1}\le \theta^m\underset{r}{\max}\|u_r\|_{H^1}=\big(\underset{r}{\max}\|u_r\|_{H^1}\big)e^{mln\theta}=:\tilde{A}e^{mln\theta},
\]
where \[
a_m:=\int_{R_{m}}|\nabla u_r|^2\mathrm{d}x+\int_{R_{m}}| u_r|^2\mathrm{d}x\quad\text{and}\quad \theta=\frac{\kappa}{\kappa+1}.
\]
Let
\[
D_r:=B(z_r^1,\delta|z_r^1|+1)\setminus B(z_r^1,\delta|z_r^1|-1)\subset R_M
\]
with
$M=\bigg[\frac{\delta|z_r^1|}{2}\bigg]-1.$
We further have
\[\int_{D_r}|\nabla u_r|^2\mathrm{d}x+\int_{D_r}| u_r|^2\mathrm{d}x
\le \tilde{A}e^{Mln\theta}\le\tilde{A}e^{\frac{\delta|z_r^1|}{2}ln\theta}.\]
It follows from  \cite[Theorem 8.17 ]{GT83} that
\begin{equation}\label{e-u}
|u_r(x)|\le C\|u_r\|_{L^2(D_r)}\le A e^{-c|z_r^1|}
\end{equation}
for any $x\in D_{r,\frac{2}{3}}:=\delta|z_r^1|-\frac{2}{3}<|x-z_r^1|<\delta|z_r^1|+\frac{2}{3}$ and large r; here A, c, C
are independent of r.
From the $L^p$-estimates of elliptic partial differential equations \cite[Theorem 9.11]{GT83},  we further conclude that
\begin{equation}\label{e-nabla-u}
    |\nabla u_r(x)|\le A e^{-c|z_r^1|}
\end{equation}
for any $x$ with $\delta|z_r^1|-\frac{1}{2}<|x-z_r^1|<\delta|z_r^1|+\frac{1}{2}$.

With \eqref{e-u}-\eqref{e-nabla-u} in hand,  let $\Gamma_1=\partial B(z_r^1,\delta|z_r^1|)\cap B_r$ and $\Gamma_2=B(z_r^1,\delta|z_r^1|)\cap\partial B_r$. Multiplying both sides
of \eqref{main-eq-omega} with $z_r^1\cdot\nabla u_r$ and integrating, we obtain
\begin{eqnarray}\label{a-a2}
    \frac{1}{2}\int_{B(z_r^1,\delta|z_r^1|)\cap B_r}(z_r^1\cdot\nabla V)u_r^2\mathrm{d}\sigma&=&\frac{(a+b\int_{\Omega_r}|\nabla u|^2\mathrm{d}x)}{2}\int_{\Gamma_1\cup\Gamma_2}(z_r^1\cdot\textbf{n})|\nabla u_r|^2\mathrm{d}\sigma\notag\\
    &&\quad-(a+b\int_{\Omega_r}|\nabla u|^2\mathrm{d}x)\int_{\Gamma_1\cup\Gamma_2}(\textbf{n}\cdot\nabla u_r)(z_r^1\cdot\nabla u_r)\mathrm{d}\sigma\notag\\
    &&\quad-\int_{\Gamma_1\cup\Gamma_2}(z_r^1\cdot\textbf{n})
    (\frac{Vu_r^2}{2}-\frac{\beta u_r^p}{p}-\frac{u_r^{q}}{q}+\frac{\lambda|u_r|^2}{2})\mathrm{d}\sigma\notag\\
    &=:&\mathcal{A}_1+\mathcal{A}_2,
\end{eqnarray}
where $\textbf{n}$ is the outward unit normal vector on $\partial{B(z_r^1,\delta|z_r^1|)\cap B_r}$,
\begin{eqnarray*}
    \mathcal{A}_i:&=&\frac{(a+b\int_{\Omega_r}|\nabla u|^2\mathrm{d}x)}{2}\int_{\Gamma_i}(z_r^1\cdot\textbf{n})|\nabla u_r|^2\mathrm{d}\sigma\notag\\
    &&\quad-(a+b\int_{\Omega_r}|\nabla u|^2\mathrm{d}x)\int_{\Gamma_i}(\textbf{n}\cdot\nabla u_r)(z_r^1\cdot\nabla u_r)\mathrm{d}\sigma\notag\\
    &&\quad-\frac{1}{2}\int_{\Gamma_i}(z_r^1\cdot\textbf{n})
    (\frac{Vu_r^2}{2}-\frac{\beta u_r^p}{p}-\frac{u_r^{q}}{q}+\frac{\lambda|u_r|^2}{2})\mathrm{d}\sigma
\end{eqnarray*}
for $i=1,2$.
As we know $z_r^1\cdot\textbf{n}(x)>0$ for any $x\in\Gamma_2\neq\emptyset$ by the choice of $\delta$.
From the fact that
\[
u_r(x)>0,\quad \text{for any }x\in B_r(x),\  \text{and } u_r(x)=0\ \text{for any }x\in\partial B_r(x),
\]
we deduce $\nabla u_r(x)=-|\nabla u_r(x)|\textbf{n}(x)=-|\nabla u_r(x)|\frac{x}{|x|}$ for any  $x\in\Gamma_2\subset\partial B_r$, which leads to
\begin{equation}\label{a2-le0}
\mathcal{A}_2=-\frac{(a+b\int_{\Omega_r}|\nabla u|^2\mathrm{d}x)}{2}\int_{\Gamma_2}(z_r^1\cdot\textbf{n})|\nabla u_r|^2\mathrm{d}\sigma\le0.
\end{equation}
In view of \eqref{e-u}-\eqref{e-nabla-u},  there is $\tau_0>0$ independent of r such that
\[
\underset{r\to\infty}{\limsup} e^{\tau_0|z_r^1|}\mathcal{A}_1=0.
\]
But it follows from Lemma \ref{rinftycom} that
\[
\underset{r\to\infty}{\limsup}\int_{B(z_r^1,\delta|z_r^1|)\cap B_r}u_r^2\mathrm{d}x\ge\|\omega^1\|_2^2>0.
\]
Moreover, the definition of $\delta$ and the assumption $(V_1)$ imply
\begin{eqnarray*}\label{e>0}
    \underset{r\to\infty}{\lim}
e^{\tau_0|z_r^1|}\int_{B(z_r^1,\delta|z_r^1|)\cap B_r}(z_r^1\cdot\nabla V)u_r^2\mathrm{d}x \ge \underset{r\to\infty}{\lim}
    e^{\tau_0|z_r^1|}\bigg(\underset{x\in B(z_r^1,\delta|z_r^1|)}{\inf}(z_r^1\cdot\nabla V)\bigg)\int_{B(z_r^1,\delta|z_r^1|)\cap B_r}u_r^2\mathrm{d}x>0.
\end{eqnarray*}
This contradicts \eqref{a-a2}-\eqref{a2-le0}. The proof is now complete.\qed\\

\noindent\textbf{Acknowledgement}\ \ %The authors would like to thank the anonymous referee for his valuable comments and suggestions.

\bibliographystyle{amsplain}

\end{document}